\newtheorem{thm}{Theorem}[section]
\newtheorem{lem}[thm]{Lemma}
\newtheorem{defn}[thm]{Definition}
\newtheorem{cor}[thm]{Corollary}
\newtheorem{prop}[thm]{Proposition}
\newtheorem{ex}[thm]{Example}
\theoremstyle{definition}
\newtheorem{rem}[thm]{Remark}
\newcommand{\N}{\mathds{N}}
\newcommand{\R}{\mathds{R}}
\date{\today}
\title{ Non-linear operators and differentiability of Lipschitz functions}
\author{Mohammed Bachir, Sebastián Tapia-García}
\address{Laboratoire SAMM 4543, Université Paris 1 Panthéon-Sorbonne
Centre P.M.F. 90 rue Tolbiac, 75634 Paris cedex 13, France.}
\email{Mohammed.Bachir@univ-paris1.fr}
\address{ Departamento de Ingenier\'ia Matem\'atica, CMM (CNRS UMI 2807) Universidad de Chile\\ Beauchef 851, Santiago, Chile.}
\address{Institute de Math\'ematique de Bordeaux, IMB (CNRS UMR 5251) Universit\'e de Bordeaux, Course de la liberation 351, Talence, France}
\email{stapia@dim.uchile.cl}
\begin{document}

\maketitle

\begin{abstract} 
In this work we provide a characterization of distinct type of  (linear and non-linear) maps between Banach spaces in terms of the differentiability of certain class of Lipschitz functions.
Our results are stated in an abstract bornological and non-linear framework. 
Restricted to the linear case, we can apply our results to compact, weakly-compact, limited and completely continuous linear operators.
Moreover, our results yield a characterization of Gelfand-Phillips spaces and recover some known result of Schur spaces and reflexive spaces concerning the differentiability of real-valued Lipschitz functions.
\end{abstract}
\noindent \textbf{Key words:} Linear and non-linear operators, differentiability of Lipschitz functions, bornology, weakly compact operators, completely continuous operators.\\
\noindent \textbf{MSC 2020:} Primary: 
46A17, 26A16, 47B07. Secondary: 47B38, 49J50.
\tableofcontents

\section{Introduction}

Differentiability of convex and Lipschitz functions defined on general (infinite dimensional) Banach spaces has attracted the interest of researchers since the last century.
Distinct notions of differentiability and their relation with the geometry of the ambient space is one of the most common topics in the area. 
A nice survey about differentiability of convex functions can be found in \cite{BV}. 
A general approach of differentiability and its applications to renorming can be found in \cite{DGZ}. 
Particularly, the relation about G\^ateaux, weak-Hadamard and Fr\'echet differentiability has been a prolific subject for many authors. 
For instance, see \cite{BL,BF,BV} and reference therein. \\

In order to motivate this work, let us proceed with the following definition.

\begin{defn}
	Let $X$ be a Banach space. 
	A bounded subset $A$ of $X$ is called limited if for any weak${}^*$ null sequence $(x_n)^*_n$, the following limit holds:
	\[  \lim_{n\to\infty}\sup_{x\in A} |\langle x^*_{n},x \rangle| = 0. \]
	That is, sequentially weakly${}^*$ convergence is uniform on $A$.
\end{defn}

Let $X$ and $Y$ be two Banach spaces. A linear bounded operator $T\in\mathcal{L}(Y,X)$ is called limited if $T(B_Y)$ is limited. 
Further information about limited sets and limited operators can be found in \cite{BFT,BD} and references therein.
In \cite{B,BFT}, we can find how to characterize limited and compact operators in terms of the differentiability of a certain class of functions respectively. 
More precise, the main result of \cite{B} read as follows.

\begin{thm}\cite[Theorem 1]{B}\label{limited differentiability}
	Let $X$ and $Y$ be two real Banach spaces, let $\mathcal{U}$ be a nonempty open convex subset of $X$ and let $T\in\mathcal{L}(Y,X)$. 
	Then, $T$ is limited if and only if for every continuous convex function $f:\mathcal{U}\to \R$, $f\circ T$ is Fr\'echet differentiable at $y\in Y$ whenever $f$ is G\^ateaux differentiable at $Ty\in \mathcal{U}$. 
\end{thm}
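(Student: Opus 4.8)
The plan is to translate the limitedness of $T$ into a property of the adjoint: $T$ is limited if and only if $T^*$ carries weak${}^*$ null sequences of $X^*$ to norm null sequences of $Y^*$, since for a weak${}^*$ null sequence $(x_n^*)$ one has $\sup_{x\in T(B_Y)}|\langle x_n^*,x\rangle| = \sup_{\|y\|\le1}|\langle T^*x_n^*,y\rangle| = \|T^*x_n^*\|$. I would then rely on three standard facts about a continuous convex function $g$ on an open set: (i) the subdifferential chain rule $\partial(g\circ T)(y)=T^*\partial g(Ty)$, valid because $Ty$ lies in the open set; (ii) $g$ is G\^ateaux differentiable at a point exactly when its subdifferential there is a singleton; and (iii) the \v{S}mulyan criterion, namely that $g$ is Fr\'echet differentiable at $y$ if and only if it is G\^ateaux differentiable there and $\partial g$ is norm-to-norm upper semicontinuous at $y$ (equivalently, $y_n^*\to g'(y)$ in norm whenever $y_n\to y$ and $y_n^*\in\partial g(y_n)$).

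For the direct implication, assume $T$ is limited and that $f$ is G\^ateaux differentiable at $x_0:=Ty$ with derivative $x_0^*$. By the chain rule $\partial(f\circ T)(y)=\{T^*x_0^*\}$, so $f\circ T$ is G\^ateaux differentiable at $y$ with gradient $T^*x_0^*$. Suppose it were not Fr\'echet differentiable there; by \v{S}mulyan there are $y_n\to y$ and $y_n^*\in\partial(f\circ T)(y_n)=T^*\partial f(Ty_n)$ with $\|y_n^*-T^*x_0^*\|\ge\varepsilon$. Write $y_n^*=T^*z_n^*$ with $z_n^*\in\partial f(Ty_n)$. Since $f$ is locally Lipschitz near $x_0$ the $z_n^*$ are bounded, and using $\langle z_n^*,h\rangle\le f'(Ty_n;h)$ together with the upper semicontinuity of $a\mapsto f'(a;h)=\inf_{t>0}\tfrac{f(a+th)-f(a)}{t}$ and the G\^ateaux differentiability of $f$ at $x_0$, I would obtain $\limsup_n\langle z_n^*,h\rangle\le\langle x_0^*,h\rangle$ for every $h$; applying this to $\pm h$ gives $z_n^*\to x_0^*$ weak${}^*$. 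Then $(z_n^*-x_0^*)$ is weak${}^*$ null, so limitedness forces $\|y_n^*-T^*x_0^*\|=\|T^*(z_n^*-x_0^*)\|\to0$, contradicting $\ge\varepsilon$.

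For the converse I argue by contraposition. If $T$ is not limited, there is a weak${}^*$ null sequence $(x_n^*)$ in $X^*$ with $\|T^*x_n^*\|\ge\varepsilon>0$ for all $n$, and $(x_n^*)$ is bounded, say $\|x_n^*\|\le C$. Fixing $y_0$ with $x_0:=Ty_0\in\mathcal U$ and a null sequence $a_n\downarrow0$ of positive reals, I would set
\[ f(x)=\sup_{n}\big(\langle x_n^*,x-x_0\rangle-a_n\big), \]
a finite continuous convex function (it is bounded above by $C\|x-x_0\|$). The key computation is that $f$ is G\^ateaux differentiable at $x_0$ with derivative $0$: since $\langle x_n^*,h\rangle\to0$ while $a_n/t\to+\infty$ for each fixed $n$ as $t\downarrow0$, one checks $\tfrac1t\sup_n(t\langle x_n^*,h\rangle-a_n)\to0$, so $f'(x_0;h)\le0$ for all $h$, and sublinearity then yields $f'(x_0;h)=0$. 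Finally, choosing $u_n\in B_Y$ with $\langle T^*x_n^*,u_n\rangle\ge\varepsilon/2$ and steps $t_n=\sqrt{a_n}$, the lower bound $f(T(y_0+t_nu_n))-f(x_0)\ge t_n\langle T^*x_n^*,u_n\rangle-a_n$ gives $\tfrac{(f\circ T)(y_0+t_nu_n)-(f\circ T)(y_0)}{\|t_nu_n\|}\ge\tfrac\varepsilon2-\sqrt{a_n}\to\tfrac\varepsilon2$, so $f\circ T$ is G\^ateaux but not Fr\'echet differentiable at $y_0$, contradicting the hypothesis.

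The routine parts are the chain rule and the \v{S}mulyan reduction; the main obstacle is the converse construction, where the delicate point is calibrating the penalties $a_n$ against the step sizes $t_n$ so that $f$ is genuinely G\^ateaux differentiable at $x_0$ (forcing the gradient to vanish) while $f\circ T$ still grows linearly along the directions $u_n$ extracted from $\|T^*x_n^*\|\ge\varepsilon$. I would also record the harmless standing assumption that $\mathcal U$ meets the range of $T$, since otherwise the differentiability condition is vacuous and there is nothing to prove.
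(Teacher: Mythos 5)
Your proposal is correct, but its forward direction follows a genuinely different route from the paper's. For the necessity ($T$ limited $\Rightarrow$ differentiability transfer), the paper factors through Proposition~\ref{convex differentiability}: it invokes Theorem~\ref{beta12} (i.e.\ \cite[Theorem 8.1.3]{BV}) to get that G\^ateaux and limited differentiability coincide for continuous convex functions (this reduces to the observation that weak${}^*$-null sequences converge uniformly on limited sets, which is the very definition of a limited set), and then concludes by the trivial composition estimate, since $T(B_Y)$ is a limited set. You instead give a self-contained dual argument: the subdifferential chain rule $\partial(f\circ T)(y)=T^*\partial f(Ty)$, the \v{S}mulyan criterion, the weak${}^*$-convergence of subgradient selections $z_n^*\in\partial f(Ty_n)$ to $d_Gf(Ty)$ (via upper semicontinuity of $a\mapsto f'(a;h)$), and finally the identity $\sup_{x\in T(B_Y)}|\langle x^*,x\rangle|=\|T^*x^*\|$ to upgrade weak${}^*$-null to norm-null. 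In effect you re-prove, in this special case, the piece of \cite[Theorem 8.1.3]{BV} that the paper simply cites; the paper's route is shorter given that machinery, while yours is self-contained and makes the duality mechanism ($T$ limited iff $T^*$ is sequentially weak${}^*$-to-norm continuous) explicit -- it is also closer in spirit to subdifferential-based proofs than to the bornological argument used for Theorem~\ref{beta differentiability}. For the converse, the two proofs are essentially the same construction from \cite[Proposition 8.1.1]{BV}: the paper takes $f(x)=\max\{0,\sup_n(x_n^*(x)-1/n)\}$ with the normalization $x_n^*(Ty_n)\geq 2$ and evaluates along $y_n/n$, while you take $f(x)=\sup_n(\langle x_n^*,x-x_0\rangle-a_n)$ with steps $t_n=\sqrt{a_n}$; these are two equally valid calibrations of penalties against step sizes, and your verification of G\^ateaux differentiability at $x_0$ with zero derivative matches the role of Proposition~\ref{convex function} in the paper. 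Your closing caveat that $\mathcal{U}$ must meet the range of $T$ is well taken: the paper's own converse silently works at $0=T0$, i.e.\ implicitly assumes $0\in\mathcal{U}$, so recording the assumption $T(Y)\cap\mathcal{U}\neq\emptyset$ (without which the right-hand condition is vacuous) is a legitimate, harmless precision rather than a defect of your proof.
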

 
In \cite{BFT} it can be found a similar result, but interchanging limited by compact operators and convex by Lipschitz functions. 
The mentioned result of \cite{BFT} exploits the following well known result: G\^ateaux and Hadamard differentiability coincide for Lipschitz functions defined on open subsets of a Banach space. 
In fact, as we show in Section~\ref{limited section}, we point out that Theorem~\ref{limited differentiability} exploits the following fact: G\^ateaux and limited differentiability coincide for convex functions (see Proposition~\ref{convex differentiability}). 
For definitions see Section~\ref{notation}.\\

In this paper we obtain results with the same structure as Theorem~\ref{limited differentiability}. 
More precisely, we look for which classes of (linear and non-linear) operators can be characterized in terms of the differentiability of a given set of Lipschitz functions.
In fact, as a consequence of our work we provide an answer to \cite[Question 5.7]{BFT}, where the authors asked about a possible characterization for linear weakly-compact operators in terms of the differentiability of some class of functions.
In order to state the main applications of this paper, let us proceed with the following definitions:

\vskip5mm

Let $X$, $Y$ be two real Banach spaces and let $T\in\mathcal{L}(Y,X)$.  
We say that $T$ is weakly compact if, $T$ sends the closed unit ball onto a relatively weakly compact set. 
We say that $T$ is completely continuous or Dunford-Pettis operator if, for every weakly convergent sequence $(y_{n})$ from $Y$, the sequence $(Ty_{n})$ is norm-convergent in $X$ (see \cite[~Definition VI.3.2,  ~p. 173]{C}). 
Equivalently, $T$ is completely continuous if, for every relatively weakly compact subset $K$ of $Y$, we have that $T(K)$ is relatively compact. 
It is well known that every compact operator is completely continuous (see \cite[p. 143]{Banach}).\\

The main result of this paper, Theorem~\ref{beta differentiability}, yields the following characterizations of weakly compact operators and completely continuous operators, beside of recovering the main result of \cite{BFT} for compact operators.

\begin{thm}\label{weak hadamard differentiability}
	Let $X$ and $Y$ be real Banach spaces and let $T\in \mathcal{L}(Y, X)$. 
	Then, $T$ is weakly compact if and only if for every Lipschitz function $f:X\to\R$, $f\circ T$ is Fr\'echet differentiable at $y$ whenever $f$ is weak-Hadamard differentiable at $Ty$.
	
\end{thm}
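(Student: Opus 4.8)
The plan is to read the equivalence through its bornological meaning and prove the two implications separately. Weak-Hadamard differentiability at a point is precisely differentiability relative to the bornology $\W$ of weakly compact subsets of $X$, whereas Fréchet differentiability is differentiability relative to the bornology of bounded sets. In this language the assertion is that $T$ carries bounded subsets of $Y$ into $\W$ — i.e. $T$ is weakly compact — exactly when $\W$-differentiability of $f$ at $Ty$ is automatically upgraded to bounded (Fréchet) differentiability of $f\circ T$ at $y$. This is structurally the same phenomenon as Theorem~\ref{limited differentiability}, with the weakly compact bornology playing the role of the limited one.

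For the direct implication, assume $T$ is weakly compact, so that $K:=\overline{T(B_Y)}^{\,w}$ is weakly compact. Suppose $f$ is weak-Hadamard differentiable at $p:=Ty$ with derivative $L\in X^*$; by definition the quotients $\big(f(p+tk)-f(p)\big)/t$ converge to $L(k)$ uniformly for $k$ ranging over any weakly compact set, in particular over $K$. Writing $(f\circ T)(y+th)-(f\circ T)(y)=f(p+t\,Th)-f(p)$ and noting that $Th\in T(B_Y)\subseteq K$ for every $h\in B_Y$, this uniform convergence over $K$ immediately yields uniform convergence of $\big((f\circ T)(y+th)-(f\circ T)(y)\big)/t$ to $(L\circ T)(h)$ over the unit ball of $Y$, which is exactly Fréchet differentiability of $f\circ T$ at $y$ with derivative $L\circ T$. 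This direction is short; the only care needed is to match the paper's definition of weak-Hadamard differentiability with uniform convergence on $\W$.

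For the converse I would argue by contraposition. If $T$ is not weakly compact then $T(B_Y)$ is not relatively weakly compact, so by the Eberlein--\v Smulian theorem there is a sequence $(y_n)$ in $B_Y$ such that $(z_n):=(Ty_n)$ has no weakly convergent subsequence; passing to a subsequence we may assume $\|z_n\|\ge\delta>0$. The goal is to manufacture a Lipschitz function $f:X\to\R$ that is weak-Hadamard differentiable at $0$ with derivative $0$, yet for which $f\circ T$ fails to be Fréchet differentiable at $0$. The natural candidate is a supremum of scaled Lipschitz bumps $f(x)=\sup_n \varphi_n(x)$, where $\varphi_n$ is a tent of controlled slope supported on a small ball around $s_nz_n$ for scales $s_n\downarrow 0$, with heights tuned so that $f(s_nz_n)/s_n$ stays bounded away from $0$. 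Taking $h=y_n$ and $t=s_n$ then gives $f(tTh)/t=f(s_nz_n)/s_n\not\to 0$, which forbids Fréchet differentiability of $f\circ T$ at $0$. Note that $f$ must be genuinely non-convex here, since for convex functions weak-Hadamard and Gâteaux differentiability largely coincide (cf. Proposition~\ref{convex differentiability}), so no convex $f$ could separate the two notions.

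The main obstacle, and the exact place where the failure of weak compactness is used, is verifying that this $f$ is weak-Hadamard differentiable at $0$, i.e. that $\sup_{h\in K} f(th)/t\to 0$ for every weakly compact $K$. The bumps sit along the directions $z_n$, and the heuristic is that a weakly compact set cannot persistently align with a sequence having no weakly convergent subsequence. Concretely, a putative failure produces $t_j\downarrow 0$ and $h_j\in K$ with $t_jh_j$ inside the $n_j$-th bump and $n_j\to\infty$; if the bump radii are chosen of order $o(s_n)$, this forces $\|z_{n_j}-(t_j/s_{n_j})h_j\|\to 0$, so that extracting $h_j\rightharpoonup h_\infty$ from the weakly compact $K$ would exhibit a weak cluster point of a subsequence of $(z_n)$, a contradiction. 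Making this rigorous requires controlling the scalars $t_j/s_{n_j}$ away from the degenerate regimes $0$ and $\infty$ — precisely the point where the parameters $s_n$, the radii and the slopes must be tuned, and where I expect to invoke a separating family of functionals (via Rosenthal's dichotomy or an interchange-of-limits argument) to keep the bumps invisible to $\W$ while visible along $(z_n)$. This quantitative separation is the real content of the argument.
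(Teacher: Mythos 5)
Your forward implication is correct and is essentially the linear specialization of the paper's argument (Theorem~\ref{beta differentiability}, $(i)\Rightarrow(ii)$, with $\beta_X$ the weak-Hadamard and $\beta_Y$ the Fr\'echet bornology): since $Th\in T(B_Y)\subset K$ for a fixed weakly compact $K$, uniform convergence of the difference quotients of $f$ on $K$ transfers to uniform convergence of the quotients of $f\circ T$ on $B_Y$. The converse also follows the right strategy (bumps of height comparable to $s_n$ centered at $s_nz_n$, exactly the paper's Lemma~\ref{beta lipschitz} with $s_n=1/n$), but the step you defer as ``the real content of the argument'' is a genuine gap, and the specific tuning you propose cannot close it. If $f$ is $L$-Lipschitz and $f(s_nz_n)/s_n\geq\varepsilon$, a bump supported in $B(s_nz_n,r_n)$ must satisfy $r_n\geq \varepsilon s_n/L$; so radii $r_n=o(s_n)$ are impossible, and in any failure scenario you only get $\|z_{n_j}-\lambda_j h_j\|\leq r_{n_j}/s_{n_j}=\kappa$ for a fixed constant $\kappa>0$, not a quantity tending to $0$. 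A norm-$\kappa$ perturbation of a sequence with no weakly convergent subsequence can perfectly well be weakly convergent: if $z_n=x+\kappa b_n$ with $(b_n)$ normalized and without weakly convergent subsequences, then $(z_n)$ has no weakly convergent subsequence, yet the constant sequence $x$ is a $\kappa$-perturbation of every subsequence. Since your argument starts from an arbitrary sequence $(Ty_n)$ with no weakly convergent subsequence, the final extraction step simply does not yield a contradiction. (By contrast, the degenerate regimes you worry about are the easy part: a failure $|f(t_jh_j)|/t_j\geq\varepsilon'$ forces $\|h_j\|\geq \varepsilon'/L$ by Lipschitzness, which together with boundedness of $(z_n)$ pins $\lambda_j$ in a compact subinterval of $(0,\infty)$.)

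What is missing is precisely the paper's Proposition~\ref{weak compact S}, i.e.\ property $(S)$ for the weak-Hadamard bornology: one may pass to a subsequence of $(z_n)$ and find a threshold $\delta>0$ such that \emph{every} sequence lying within $\delta$ of any further subsequence is still not relatively weakly compact; the bumps must then be built along this pre-selected subsequence with $\kappa\leq\delta$. This is how the paper proceeds (Theorem~\ref{weak hadamard differentiability} is deduced from Theorem~\ref{beta differentiability} plus Proposition~\ref{weak compact S}), and the proof of that proposition is a nontrivial nested-subsequence diagonal argument: if no subsequence were stable, approximating weakly convergent sequences at scales $1/k$ have weak limits $z^k$ forming a norm-Cauchy sequence, and the diagonal $x_{n(k,k)}$ converges weakly to $z^\infty=\lim_k z^k$, a contradiction. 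Your passing mention of Rosenthal's dichotomy can in fact be made into an alternative proof of this same statement: an $\ell^1$-subsequence is stable under perturbations smaller than a fraction of its lower $\ell^1$-constant, while a weakly Cauchy non-weakly-convergent subsequence has weak${}^*$ limit $x^{**}\in X^{**}\setminus X$ and is stable under perturbations smaller than $\mathrm{dist}(x^{**},X)$, by weak${}^*$ lower semicontinuity of the norm on $X^{**}$. But as written this remains a named hope rather than an argument, so the proposal does not yet prove the hard direction.
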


\begin{thm}\label{Gateaux-weak hadamard differentiability}
	Let $X$ and $Y$ be real Banach spaces and let $T\in \mathcal{L}(Y, X)$.  
	Then, $T$ is completely continuous if and only if for every Lipschitz function $f:X\to\R$, $f\circ T$ is weak-Hadamard  differentiable at $y$ whenever $f$ is G\^ateaux  (equivalent to Hadamard) differentiable at $Ty$.	
\end{thm}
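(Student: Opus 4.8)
The plan is to prove both implications directly, keeping in mind the bornological picture behind the statement: for a Lipschitz $f$ the candidate derivative of $f\circ T$ at $y$ is $T^*f'(Ty)$, and $\beta$-differentiability of $f\circ T$ at $y$ uniformly over a family $\mathcal S$ of subsets of $Y$ amounts to $\beta$-differentiability of $f$ at $Ty$ uniformly over the family $\{\,\overline{T(K)}:K\in\mathcal S\,\}$. Since Gâteaux and Hadamard differentiability coincide for Lipschitz functions, ``G\^ateaux at $Ty$'' is uniformity over norm-compact sets and ``weak-Hadamard at $y$'' is uniformity over relatively weakly compact sets; the statement should thus reflect exactly that $T$ sends relatively weakly compact sets to relatively compact ones, i.e. that $T$ is completely continuous.

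For the \emph{only if} implication, assume $T$ is completely continuous and let $f$ be Lipschitz and G\^ateaux (hence Hadamard) differentiable at $Ty$. Fix a relatively weakly compact $K\subseteq Y$. By complete continuity $\overline{T(K)}$ is norm-compact, and using $f(T(y+th))-f(Ty)=f(Ty+t\,Th)-f(Ty)$ together with $\langle T^*f'(Ty),h\rangle=\langle f'(Ty),Th\rangle$, the weak-Hadamard difference quotient of $f\circ T$ over $K$ is dominated (via $Th\in T(K)\subseteq\overline{T(K)}$) by the Hadamard difference quotient of $f$ at $Ty$ over the compact set $\overline{T(K)}$. Hadamard differentiability of $f$ at $Ty$ makes the latter tend to $0$ uniformly on $\overline{T(K)}$ as $t\to0$; since $K$ was arbitrary, $f\circ T$ is weak-Hadamard differentiable at $y$. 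This half relies only on the bornological inclusion $T(\mathcal W_Y)\subseteq\mathcal K_X$.

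For the converse I argue by contraposition and build an explicit witness. If $T$ is not completely continuous there is a relatively weakly compact $K\subseteq Y$ with $T(K)$ not relatively compact, hence an $\varepsilon$-separated sequence $(Th_n)$ with $h_n\in K$; by Eberlein--\v{S}mulian I pass to a weakly convergent subsequence, subtract its limit, and obtain a weakly null $(g_n)$ with $x_n:=Tg_n\rightharpoonup0$ and $\|x_n-x_m\|\ge\varepsilon$ for $n\ne m$. After a further subsequence I may assume $0<c\le\|x_n\|\le M$. Fixing $\delta>0$, a small $\eta\in(0,c/4)$ and a fast-decreasing $t_n\downarrow0$, I set
\[
f(x)=\sup_{n}\Big(\delta t_n-\tfrac{\delta}{\eta}\,\|x-t_n x_n\|\Big)^{+},
\]
a supremum of positive cones with peak $\delta t_n$ at $p_n:=t_nx_n$, radius $r_n=\eta t_n$ and common Lipschitz constant $\delta/\eta$. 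Choosing $t_{n+1}/t_n$ small enough makes the balls $B(p_n,r_n)$ pairwise disjoint and keeps $0$ outside each (as $\|p_n\|\ge c\,t_n>\eta t_n=r_n$), so $f$ is $\tfrac{\delta}{\eta}$-Lipschitz with $f(0)=0$. Taking $y=0$, since $f(t_n x_n)=\delta t_n$ the weak-Hadamard quotients of $f\circ T$ at $0$ on the weakly compact set $W=\{g_n\}\cup\{0\}$ satisfy $\sup_{h\in W}|f(T(t_n h))/t_n|\ge\delta$ for every $n$, so $f\circ T$ is not weak-Hadamard differentiable at $0$.

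The main obstacle is to verify that $f$ is nevertheless G\^ateaux differentiable at $0$ with $f'(0)=0$, for then the chain rule forces the only possible derivative of $f\circ T$ at $0$ to be $T^*f'(0)=0$, and the inequality above finishes the proof. Here the weak nullity of $(x_n)$ is essential: for a fixed direction $v$ pick $x^*$ with $\langle x^*,v\rangle=\|v\|$ and $\|x^*\|=1$; the condition $tv\in B(p_n,r_n)$ forces $|\langle x^*,x_n\rangle|\ge c-2\eta>0$, which by $\langle x^*,x_n\rangle\to0$ holds for only finitely many $n$, and for those the corresponding balls are bounded away from the origin. Hence $f(tv)=0$ for all small $|t|$, giving $f'(0)=0$. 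I expect the bookkeeping in this step — quantifying ``fast enough'' decrease of $(t_n)$ so as to guarantee simultaneously disjoint supports, a uniform Lipschitz bound, and finiteness of bad indices per direction — to be the only delicate point; everything else is the chain rule together with the coincidence of G\^ateaux and Hadamard differentiability for Lipschitz functions.
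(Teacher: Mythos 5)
Your proof is correct, but it takes a genuinely different route from the paper. The paper obtains this theorem in two lines from its abstract Theorem~\ref{beta differentiability}, applied with $\beta_X$ the Hadamard and $\beta_Y$ the weak-Hadamard bornology; the hard direction there rests on property $(S)$ of the Hadamard bornology (Proposition~\ref{compact S}) and on Lemma~\ref{beta lipschitz}, whose counterexample is the distance function to the complement of $\bigcup_n B(x_n/n,\sigma/4n)$, with G\^ateaux differentiability at $0$ obtained from cone-separation and the core criterion (Propositions~\ref{sigma/4} and~\ref{coregateaux}) --- an argument valid for \emph{any} separated sequence. Your forward implication is essentially the paper's implication $(i)\Rightarrow(ii)$ specialized to linear $T$ (chain rule plus domination of the quotient over $K$ by the quotient of $f$ over the compact set $\overline{T(K)}$). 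Your converse, however, is different: you normalize the separated sequence to be weakly null (Eberlein-\u{S}mulian plus subtraction of the weak limit, using weak-weak continuity of $T$), build a sup-of-cones bump function peaked at the points $t_nx_n$, and prove G\^ateaux differentiability at $0$ by a norming-functional argument --- a fixed direction $v$ can meet only finitely many bumps because $\langle x^*,x_n\rangle\to 0$, and those finitely many are bounded away from the origin, so $f$ vanishes near $0$ on every line. Minor remarks: the pairwise disjointness of the balls and the fast decay of $(t_n)$, which you flag as the delicate point, are in fact not needed (a supremum of uniformly Lipschitz nonnegative cones is automatically Lipschitz, and your differentiability argument never uses disjointness), and the lower bound $\|x_n\|\geq c>0$ is justified since at most one term of an $\varepsilon$-separated sequence can have norm below $\varepsilon/2$. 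As for what each approach buys: the paper's machinery yields in one stroke the analogous characterizations of compact, limited and weakly compact operators, and even nonlinear p.h.-differentiable maps; your argument is elementary and self-contained and avoids property $(S)$ entirely --- but this is possible precisely because here the hypothesis on $f$ is only G\^ateaux (equivalently Hadamard) differentiability, and your weak-nullity trick is tailored to the weak-Hadamard/Hadamard pair: it would not survive replacing the Hadamard bornology on $X$ by, say, the limited one, where the cone-separation/core argument and property $(S)$ are genuinely needed.
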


  Our main result, Theorem~\ref{beta differentiability}, is an unified statement that characterizes several types of linear and non-linear operators in terms of the {\it "p.h. differentiability"} of certain Lipschitz functions (see Definition~\ref{ph differentiability}), among which weakly-compact, completely continuous, compact and limited linear operators are included. 
 In order to state the mentioned theorem, we introduce an abstract property on bornologies that we call property $(S)$ in Definition~\ref{property S}, which is satisfied by the Hadamard, weak-Hadamard and limited bornology. 
 Also, we propose a weaker version of differentiability which can be applied to positively homogeneous maps, see Definition~\ref{ph differentiability}.\\

The outline of this paper is as follows: In Section~\ref{notation} we give definitions and fix the notation used through this manuscript.  
In Section~\ref{preliminar results} we present some examples of bornologies that satisfies property $(S)$, see Definition~\ref{property S}. 
Also, for a vector bornology $\beta$ on $X$ which satisfies property $(S)$, different from the Fr\'echet bornology, we construct a Lipschitz $f:X\to \R$ which is $\beta$-differentiable at $0$ but not Fr\'echet differentiable at $0$. 
In Section~\ref{main result}, we state and prove our main result Theorem~\ref{beta differentiability}. 
Moreover, we also prove Theorem~\ref{weak hadamard differentiability} and Theorem~\ref{Gateaux-weak hadamard differentiability}. 
In Section~\ref{limited section}, we present an alternative proof of Theorem~\ref{limited differentiability} based in the fact that G\^ateaux and limited differentiability coincide for continuous convex functions (see Proposition~\ref{convex differentiability}).
Finally, in Section~\ref{consequences}, we present some consequences of our results. 
More precisely, as a direct consequence of Theorem~\ref{beta differentiability} we characterize Gelfand-Phillips spaces, finite dimensional spaces and we recover some well-known characterizations of  Schur spaces and reflexive spaces.
Also, we present some results on spaceability and an extension of a result of Bourgain and Diestel given in \cite{BD}.

\section{Preliminaries and notations}\label{notation}

The notation used through this paper is standard. By $X$, $Y$ and $Z$ we denote real Banach spaces.
By $X^*$ we denote the dual space of $X$.
 For $x\in X$ and $r>0$ we denote by $B(x,r)$ and $B_X$ the open ball centered at $x$ of radius $r$ and the open unit ball respectively.
We denote by $\mathcal{L}(Y,X)$ the vector space of linear bounded operators from $Y$ to $X$.
A set $A\subset X$ is said balanced if $\lambda A\subset A$, for all $|\lambda|\leq1$.
We recall that a bornology $\beta$ on $X$ is a family of bounded nonempty subsets of $X$ satisfying the following three properties:
\begin{enumerate}
	\item $\bigcup_{A\in\beta} A=X $,
	\item for all $A,~B\in\beta$, $A\cup B\in \beta$, and
	\item for all $A\in \beta$ and $B\subset A$,~ $B\in \beta$.
\end{enumerate}

If $A\in\beta$, we say that $A$ is a $\beta$-set.
For instance, some known bornologies are the ones of G\^ateaux, Hadamard, Limited, weak-Hadamard and Fr\'echet. 
These bornologies correspond to the family of finite sets, relatively compact set, limited set, relatively weakly-compact set and bounded sets respectively. 
For a bornology $\beta$ on $X$ and a linear operator $T:Y\to X$, we say that $T$ is a $\beta$-operator if $T(B(y,r))\in \beta$, for all $y\in Y$ and all $r>0$.
Since $\beta$ only contains bounded sets, any $\beta$-operator is continuous. 
We recall that a  function $f: X\to Z$ is $L$-Lipschitz, for $L\geq 0$, if  
\[\|f(x_1)-f(x_2)\|\leq L\|x_1-x_2\|~ \text{for every }x_1,~x_2\in X.\] 
The Lipschitz constant of $f$, denoted by $\textup{Lip}(f)$, is the smallest value $L\geq 0$ such that $f$ is $L$-Lipschitz.
The space of real-valued Lipschitz functions defined on $X$ is denoted by $\textup{Lip}(X)$.
The Lipschitz constant is a seminorm on $\textup{Lip}(X)$.
Finally, for a bornology $\beta$ on $X$, we recall that a function $f:X\to Z$ is said $\beta$-differentiable at $x_0\in X$, with differential $d_\beta f(x_0)\in \mathcal{L}(X,Z)$, if 
\[\lim_{t\to 0}\sup_{u\in A}\left\| \dfrac{f(x_0+tu)-f(x_0)}{t}- d_\beta f(x_0)(u) \right\| =0, ~ \text{for all }A\in \beta.\]

Let us introduce the following definitions which are needed to state our main theorem.

\begin{defn} \label{ball}
Let $\beta$ be a bornology on $X$. 
We say that $\beta$ is a vector bornology if:
\begin{enumerate}
	\item For any $A\in \beta$, $\textnormal{bal}(A):=\{\lambda x:~x \in A,~|\lambda|\leq1\}\in \beta$.
	\item For any $A\in \beta$, $\lambda\in \R$ and $x\in X$, $x+\lambda A\in \beta$,
\end{enumerate}
where $\textnormal{bal}(A)$ stands for the balanced hull of $A$.
\end{defn}
Notice that the Hadamard, weakly-Hadamard, limited and Fr\'echet bornologies are clearly vector bornologies. Let $\beta$ be a vector bornology on $X$ and let $T\in \mathcal{L}(Y,X)$. 
Then, $T$ is a $\beta$-operator if and only if $T(B_Y)\in \beta$.
Further, a function $f:X\to Z$ is $\beta$-differentiable at $x_0\in X$, with differential $d_\beta f(x_0)\in \mathcal{L}(X,Z)$, if 
\[\lim_{t\to 0^+}\sup_{u\in A}\left\| \dfrac{f(x_0+tu)-f(x_0)}{t}- d_\beta f(x_0)(u) \right\| =0, ~ \text{for all }A\in \beta.\]

We will sometimes simply denote  $\beta=G, H, L, wH$ and $F$, for the G\^ateaux, Hadamard, limited, weak Hadamard and Fr\'echet bornologies respectively, where the underlying Banach space is clear from the context. 
With these notations, $\beta$-differentiability corresponds  to the G\^ateaux (resp. Hadamard, limited, weak Hadamard and Fr\'echet) differentiability.
Let us continue with the definition of property $(S)$ which plays a fundamental roll in this manuscript.

\begin{defn}\label{property S}
Let $\beta$ be a vector bornology on $X$. 
We say that $\beta$ satisfies property $(S)$ if for every bounded set $A\subset X$ such that $A\notin \beta$, there are a sequence $(x_n)_n\subset A$ and $\delta >0$ such that for any increasing sequence $(n_k)\subset \N$ and for any sequence $(y_k)_k\subset X$ satisfying $\|y_k-x_{n_k}\| \leq \delta$ for all $k\in\N$, the set $\{y_k:k\in\N\}\notin \beta$.
\end{defn}
We say that a set $A\subset X$ is semi-balanced if $\lambda A\subset A$, for all $0\leq \lambda\leq 1$.
\begin{prop} \label{sequence property (S)}
 Let $\beta$ be a vector bornology on $X$ satisfying property~$(S)$. 
 Let $A\subset X$ be a semi-balanced set such that $A\notin \beta$. 
 Let $(x_n)_n\subset A$ and $\delta>0$ given by property $(S)$.
 Then, $(x_n)_n$ does not have accumulation points. 
 Moreover, the sequence $(x_n)_n$ can be chosen such that $\|x_n\|=\|x_1\|$ for all $n\in \N$.
\end{prop}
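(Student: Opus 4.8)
The plan is to prove the two assertions separately, using only the defining robustness of property~$(S)$ together with the fact that $\beta$, being a bornology, contains every singleton: each $p\in X$ lies in some $A_0\in\beta$, and $\{p\}\subset A_0$, so $\{p\}\in\beta$ by stability under subsets.

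First I would establish the absence of accumulation points by a \emph{constant-sequence trick}. Suppose toward a contradiction that some $p\in X$ is an accumulation point of $(x_n)_n$, so that a subsequence $(x_{n_k})_k$ converges to $p$ in norm. After discarding finitely many indices we may assume $\|x_{n_k}-p\|\le\delta$ for every $k$, the resulting index sequence still being increasing. Taking the constant perturbation $y_k:=p$, the hypotheses of property~$(S)$ are met, hence $\{y_k:k\in\N\}=\{p\}\notin\beta$, contradicting $\{p\}\in\beta$. Applying the same reasoning with $p=0$ rules out $x_{n_k}\to 0$, so in fact $m:=\inf_n\|x_n\|>0$; moreover $M:=\sup_n\|x_n\|<\infty$ since $A$ is bounded.

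For the \emph{moreover} part the idea is to rescale the points onto a common sphere while remaining inside the semi-balanced set $A$. Because the norms lie in the compact interval $[m,M]$ with $m>0$, I would pass to a subsequence $(x_{n_k})_k$ with $\|x_{n_k}\|\to\rho$ for some $\rho\ge m>0$. Fixing a small $\varepsilon>0$ and discarding finitely many terms so that $\|x_{n_k}\|\in(\rho-\varepsilon,\rho+\varepsilon)$ for all $k$, I set the target radius $t:=\rho-\varepsilon$ and define $x_k':=\lambda_k x_{n_k}$ with $\lambda_k:=t/\|x_{n_k}\|\in(0,1]$. Since $\lambda_k\le 1$, semi-balancedness gives $x_k'\in A$, and by construction $\|x_k'\|=t$ for all $k$, so the rescaled sequence has constant norm.

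The delicate point, and the main obstacle, is to verify that rescaling does not destroy property~$(S)$; here the estimate is clean. As $\lambda_k\le 1$, one has $\|x_k'-x_{n_k}\|=(1-\lambda_k)\|x_{n_k}\|=\|x_{n_k}\|-t<2\varepsilon$, so choosing $\varepsilon\le\delta/4$ yields $\|x_k'-x_{n_k}\|<\delta/2$ for every $k$. Consequently, for any increasing subsequence $(k_j)_j$ and any $(y_j)_j$ with $\|y_j-x_{k_j}'\|\le\delta/2$, the triangle inequality gives $\|y_j-x_{n_{k_j}}\|<\delta$, whence property~$(S)$ for the original sequence forces $\{y_j:j\in\N\}\notin\beta$. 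Thus the rescaled sequence, relabelled as $(x_n)_n$ with common value $\|x_1\|=t$, again witnesses property~$(S)$ with $\delta$ replaced by $\delta/2$, has constant norm, and therefore has no accumulation points by the first part. The finitely many discarded initial terms are harmless, as property~$(S)$ is a statement about tails and subsequences only.
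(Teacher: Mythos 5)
Your proof is correct and follows essentially the same route as the paper's: the constant-sequence trick (using that singletons belong to any bornology) for the absence of accumulation points, and rescaling onto a common sphere, legitimate by semi-balancedness, with the triangle-inequality argument showing the rescaled sequence together with $\delta/2$ still witnesses property $(S)$. The only cosmetic difference is that the paper fixes the target radius as $\alpha-\delta/4$ after redefining $\delta:=\min\{\alpha,\delta\}$, whereas you use $\rho-\varepsilon$ with $\varepsilon\le\delta/4$ ``small'' --- where, to be pedantic, you should state explicitly that $\varepsilon<\rho$ as well, so that the scaling factors $\lambda_k$ stay in $(0,1]$.
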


\begin{proof}
	Reasoning by contradiction, let us assume that there is a subsequence $(x_{n_k})_k$ of $(x_n)_n$ convergent to some $\overline{x}\in X$. 
	Up to a subsequence, we can assume that $\| \overline{x}-x_{n_k}\| <\delta/2$ for all $ k\in \N$. 
	Then, the constant sequence $(y_k)_k\subset  X$, defined by $y_k=\overline{x}$ for all $k\in\N$,
	satisfies $\|y_k-x_{n_k}\| \leq \delta$ for all $k\in\N$. 
	Thus, by property $(S)$, $\{y_k:~k\in\N\}=\{\overline{x}\}\notin \beta$ which is a contradiction. \\
	
	In order to prove the second part of Proposition~\ref{sequence property (S)}, we construct a sequence which is near to a subsequence of $(x_n)$.
	Since $A$ is a bounded set, up to a subsequence that we still denote by $(x_n)_n$, we can assume that $(\|x_n\|)$ is convergent to some $\alpha\geq 0$.
	Thanks to the first part of Proposition~\ref{sequence property (S)}, we have that $\alpha >0$. 	
	Let us redefine $\delta:= \min\{\alpha, \delta\}$.
	Now, we can further assume, up to a subsequence that we still denote by $(x_n)_n$, that $\|x_n\| \in [\alpha -\frac{\delta}{4}, \alpha + \frac{\delta}{4}]$.
	For each $n\in \N$, let us consider 
	\[x'_n :=  \left( \alpha -\frac{\delta}{4} \right) \frac{x_n}{\|x_n\|}.\]
	
	Notice that $\|x'_n\| =\alpha -\frac{\delta}{4}$ and that $\|x_n-x'_n\| \leq \delta/2$ for all $n\in \N$ . 
	Moreover, since $A$ is a semi-balanced set, we have that $(x'_n)_n\subset A$. 
	Finally, thanks to the triangle inequality, the sequence $(x'_n)_n\subset A$ and $\delta':= \delta/2>0$ can be chosen as the witnesses of property $(S)$ for the set $A$.	
\end{proof}
\begin{rem}
 In view of Proposition~\ref{sequence property (S)}, if $\beta$ is a bornology on $X$ satisfying property $(S)$, then $\beta$ must contain the relatively compact sets of $X$.
\end{rem}
In the following section we show that the Hadamard, weakly-Hadamard and limited bornologies are vector bornologies satisfying property $(S)$. 
Moreover, the Fr\'echet bornology trivially satisfies the property $(S)$.

\section{Property $(S)$ and the construction of a Lipschitz function}\label{preliminar results}

We start this section by showing that some well known bornologies satisfy the property~$(S)$ (see Definition~\ref{property S}). 
In this section $X$ denotes an infinite dimensional Banach space. 
Otherwise, the following three proposition are reduced to study the Fr\'echet bornology, and therefore, there is nothing to prove. 
Albeit simple, we present the proof of Proposition~\ref{compact S} and Proposition~\ref{limited S}. 

\begin{prop}\label{compact S}
The Hadamard bornology on $X$ satisfies property $(S)$.
\end{prop}
\begin{proof}
Let $A\subset X$ be a bounded non-relatively compact set. 
Then there exists a sequence $(x_n)_n\subset A$ with no accumulation points. 
Up to a subsequence, which we denote by $(x_n)_n$, we find $\sigma>0$ such that $\|x_n-x_m\|\geq \sigma$ for all $n\neq m$. 
Therefore, if we choose $\delta=\sigma/4$, then any sequence $(y_k)_k$ satisfying $\|y_k-x_{n_k}\|\leq \delta$, for some increasing sequence $(n_k)$, has no accumulation points.
Thus, the set $\{y_k:k\in\N\}$ is not relatively compact.
\end{proof}
\begin{prop}\label{limited S}
The Limited bornology on $X$  satisfies property $(S)$.
\end{prop}
\begin{proof}
Let $A\subset X$ be a bounded non-limited set. 
Then there is a weak${}^*$-null sequence $(x_n^*)_n\subset B_{X^*}$ which does not converge to $0$ uniformly on $A$. 
Hence, up to a subsequence of $(x_n^*)_n$, there exist a sequence $(x_n)_n\subset A$ and $\sigma >0$ such that $|x^*_n(x_n)| \geq \sigma$ for all $n\in \N$. 
Considering $\delta=\sigma/2$, we obtain that any vector $y_k\in B(x_{n_k},\delta)$, with $(n_k)_k$ an increasing sequence, satisfies $|x^*_{n_k}(y_k)|\geq \sigma/2$.
Therefore, the set $\{y_k:k\in\N\}$ is not limited.
\end{proof}
The following result concerns the weak-Hadamard bornology. It can be found inside of the proof of \cite[Theorem 2.1]{BFV}.
\begin{prop}\label{weak compact S}
The weak-Hadamard bornology on $X$ satisfies property $(S)$.
\end{prop}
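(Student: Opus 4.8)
The weak-Hadamard bornology consists of the relatively weakly compact subsets of $X$, so the statement to prove is that every bounded, non-relatively-weakly-compact set $A\subset X$ admits a witness for property $(S)$. The plan is to combine the Eberlein--\v{S}mulian theorem (a bounded set is relatively weakly compact if and only if it is relatively weakly sequentially compact) with Grothendieck's double-limit criterion, in order to extract a sequence in $A$ together with a separating sequence of functionals exhibiting a quantitative \emph{interlacing gap}, and then to observe that such a gap is stable under small norm perturbations. This is the feature that, as in Proposition~\ref{compact S} and Proposition~\ref{limited S}, lets a single margin $\delta$ control all admissible perturbations simultaneously.

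First I would produce the gap. Since $A$ is bounded but not relatively weakly compact, Grothendieck's double-limit criterion fails for $A$: there exist $(x_n)_n\subset A$ and $(x_n^*)_n\subset B_{X^*}$ for which the two iterated limits of $x_i^*(x_j)$ exist and are distinct. By the standard recursive interlacing extraction used in the proof of the Eberlein--\v{S}mulian theorem (this is precisely the construction appearing inside the proof of \cite[Theorem 2.1]{BFV}), after passing to subsequences I may assume there are reals $s<t$ with
\[ x_i^*(x_j)\le s \quad (i<j), \qquad x_i^*(x_j)\ge t \quad (i\ge j). \]
Set $\theta:=t-s>0$ and $\delta:=\theta/4$. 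I claim $(x_n)_n$ and $\delta$ witness property $(S)$.

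To verify robustness, fix an increasing sequence $(n_k)_k$ and points $(y_k)_k$ with $\|y_k-x_{n_k}\|\le\delta$, and set $z_k^*:=x_{n_k}^*\in B_{X^*}$. Since $\|z_k^*\|\le 1$, the estimate $|z_j^*(y_k)-x_{n_j}^*(x_{n_k})|\le\delta$ transfers the gap to
\[ z_j^*(y_k)\le s+\delta \quad (j<k), \qquad z_j^*(y_k)\ge t-\delta \quad (j\ge k), \]
where $s+\delta<t-\delta$ because $\delta=\theta/4$. Suppose, for contradiction, that $\{y_k:k\in\N\}$ were relatively weakly compact. By Eberlein--\v{S}mulian some subsequence satisfies $y_{k_l}\rightharpoonup y$; relabel $u_l:=y_{k_l}$ and $v_l^*:=z_{k_l}^*$, so that $v_j^*(u_l)\le s+\delta$ for $j<l$ and $v_j^*(u_l)\ge t-\delta$ for $j\ge l$. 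Fixing $j$ and letting $l\to\infty$ along $l>j$ gives $v_j^*(y)\le s+\delta$ for every $j$. Let $\varphi\in B_{X^*}$ be a weak${}^*$ cluster point of $(v_j^*)_j$, which exists by the weak${}^*$ compactness of $B_{X^*}$. Evaluating cluster values on the fixed vectors $y$ and each $u_l$ yields $\varphi(y)\le s+\delta$ and $\varphi(u_l)\ge t-\delta$; since $u_l\rightharpoonup y$ and $\varphi\in X^*$, we obtain $\varphi(y)=\lim_l\varphi(u_l)\ge t-\delta>s+\delta\ge\varphi(y)$, a contradiction. Hence $\{y_k:k\in\N\}$ is not relatively weakly compact, which is exactly what property $(S)$ requires.

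The main obstacle is the first step: extracting the interlacing gap with a \emph{fixed} quantitative margin $\theta$, since this is where the double-limit criterion and the diagonal interlacing argument genuinely enter. Once that margin is secured, the perturbation stability is immediate from $\|x_n^*\|\le 1$, and the weak${}^*$ cluster-point contradiction is routine.
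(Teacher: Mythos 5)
Your proof is correct, but it follows a genuinely different route from the paper's. The paper argues by contradiction with only soft tools: starting from a sequence $(x_n)_n\subset A$ with no weakly convergent subsequence (Eberlein-\u{S}mulian), the negation of property $(S)$ is applied repeatedly with tolerances $1/k$ to produce nested subsequences and perturbed weakly convergent sequences; the weak limits $z^k$ are shown to be norm-Cauchy, and the diagonal subsequence $(x_{n(k,k)})_k$ is then shown to converge weakly to $z^\infty$, contradicting the choice of $(x_n)_n$. In particular the paper never exhibits an explicit $\delta$. You instead produce an explicit, quantitative certificate of non-weak-compactness: Grothendieck's double-limit criterion plus the standard back-and-forth extraction yields an interlacing gap $x_i^*(x_j)\le s$ ($i<j$), $x_i^*(x_j)\ge t$ ($i\ge j$) against a fixed sequence in $B_{X^*}$, and since $\|x_n^*\|\le 1$ this certificate is stable under perturbations of size $\delta=(t-s)/4$ of the points, uniformly and simultaneously; the weak${}^*$ cluster-point argument correctly rules out weak convergence of any perturbed subsequence (I checked the tail/cluster-value estimates, they are sound). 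What your approach buys is an explicit witness pair $((x_n)_n,\delta)$ and a proof whose architecture matches Propositions~\ref{compact S} and \ref{limited S} (a norm-stable separation certificate), which is conceptually unifying; what it costs is reliance on heavier classical machinery, and the one step you invoke without proof---the extraction of the interlacing gap with a fixed margin, including the WLOG orientation obtained by replacing $x_n^*$ with $-x_n^*$---is genuinely the crux and would need to be written out or given a precise reference (it is standard, e.g.\ in the double-limit characterizations of weak compactness, but it is not literally the construction inside the proof of \cite[Theorem 2.1]{BFV}, which is the diagonal argument the paper itself uses).
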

\begin{proof}

Let $A\subset X$ be a bounded non-relatively weakly-compact set. 
By the Eberlein-\u{S}mulian Theorem, there is a sequence $(x_n)_n\subset X$ with no weakly-convergent subsequence. 
By contradiction, suppose that no subsequence of $(x_n)$ satisfies the statement of property $(S)$. 
Then, there exist an increasing sequence $(n(1,j))_j\subset \N$ and a sequence $(z^1_{n(1,j)})_j$ weakly-convergent to $z^1$ such that $z^1_{n(1,j)}\in B(x_{n(1,j)},1)$ for all $j\in \N$. 
Inductively, for $k\geq 2$, there exist a subsequence $(n(k,j))_j$ of $(n(k-1,j))_j$ and $(z^k_{n(k,j)})_j$ weakly-convergent sequence to $z^k$ such that $z^k_{n(k,j)}\in B(x_{n(k,j)},1/k)$ for all $j\in \N$. 
Let us show that the sequence $(z^k)_k$ converges in norm to some $z^\infty\in X$. 
Indeed, let $k<l$. 
Recalling that the norm is a weakly-lower semi continuous and that $(n(l,j))_j$ is a subsequence of $(n({k,j}))_j$, we obtain
\[\|z^k-z^l\|\leq \liminf_j \|z^k_{n({l,j})}-z^l_{n({l,j})}\| \leq \liminf_j \|z^k_{n({l,j})}-x_{n({l,j})}\| +\|x_{n({l,j})}-z^l_{n({l,j})}\|\leq \dfrac{1}{k}+\dfrac{1}{l},\]
\noindent proving that $(z^k)_k$ is a norm-Cauchy sequence. 
We claim that $(x_{n(k,k)})_k$ weakly-converges to $z^\infty$. 
Let $x^*\in S_{X^*}$ and $\varepsilon>0$. 
Let $n_0 \in \N$ such that $n_0^{-1}\leq \varepsilon/3$.
Thus, $\|z^k-z^\infty\|\leq \varepsilon/3$ for all $k\geq n_0$. 
Since the sequence $(z^{n_0}_{n(n_0,j)})_j$ is weakly-convergent to $z^{n_0}$, there is $m_0\in \N$ such that $|\langle x^*, z^{n_0}-z^{n_0}_{n({n_0,j})}\rangle| \leq \varepsilon/3$ for all $j\geq m_0$. 
Observe that, for any $k> n_0$, there is $j_k\in \N$ such that $n(k,k)=n(n_0,j_k)$. 
Hence, for $k$ large, we have that $j_k>m_0$ and then

\begin{align*}
	|\langle x^*,x_{n(k,k)}-z^\infty\rangle | &\leq |\langle x^*,x_{n(n_0,j_k)}-z^{n_0}_{n(n_0,j_k)}\rangle |+|\langle x^*,z^{n_0}_{n(n_0,j_k)}-z^{n_0}\rangle |+|\langle x^*,z^{n_0}-z^\infty\rangle |\\
	&\leq n_0^{-1} + \dfrac{\varepsilon}{3} + \|z^{n_0}-z^\infty\| \leq \varepsilon,\\
\end{align*}
\noindent concluding that the sequence $(x_{n(k,k)})_k$ weakly-converges to $z^\infty$. 
Therefore, the set $\{x_{n(k,k)}:k\in\N\}$ is weakly-compact, which is a contradiction. 

\end{proof}

The last ingredient of the proof of Theorem~\ref{beta differentiability} is the Lipschitz function constructed in Lemma~\ref{beta lipschitz} below.
A similar construction is used to prove certain properties of non-reflexive spaces, see \cite{BFV}. 
In order to continue, let us give some definitions and state some simple results that can be found in \cite{BFT}.\\

For a set $A\subset X$, $\textup{cone}(A)$ denotes the set $\{\lambda x: x\in A,~\lambda \geq 0\}$.

\begin{defn}
	Let $X$ be a Banach space, let $(x_n)_n\subset X$ such that $\|x_n\|=\|x_m\|$ for all $n,m \in \N$ and let $\sigma\in (0,\|x_1\|)$. 
	\begin{enumerate}
		\item We say that $(x_n)_n$ is $\sigma$-separated if $\|x_n-x_m\|\geq \sigma$ for all $n,m\in \N$, with $n\neq m$.
		\item We say that $(x_n)_n$ is $\sigma$-cone separated if the sets $\{\textup{cone}(B(x_n,\sigma))\setminus \{0\}:n\in \N\}$ are pairwise disjoint.
	\end{enumerate}
\end{defn}
By definition, a $\sigma$-cone separated sequence is $2\sigma$-separated. 
Reciprocally, we have the following result.
\begin{prop}{\cite[Proposition 3.6]{BFT}}\label{sigma/4}
	Let $(x_n)_n\subset X$ be a $\sigma$-separated sequence. Then $(x_n)_n$ is $\sigma/4$-cone separated.
\end{prop}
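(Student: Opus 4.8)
The plan is to argue by contradiction, assuming that two of the open cones share a nonzero point, and then to exhibit a violation of the $\sigma$-separation hypothesis through a single application of the triangle inequality. No geometric machinery beyond this should be needed; the whole statement will follow from the triangle inequality together with the hypothesis that all the $x_n$ share a common norm.

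First I would suppose, for contradiction, that there exist indices $n\neq m$ and a nonzero vector
\[ z\in \textup{cone}(B(x_n,\sigma/4))\cap \textup{cone}(B(x_m,\sigma/4)). \]
By the definition of $\textup{cone}$, I can write $z=\lambda y=\mu w$ with $\|y-x_n\|<\sigma/4$, $\|w-x_m\|<\sigma/4$ and $\lambda,\mu\geq 0$. Since $z\neq 0$, both scalars are strictly positive, so $y=z/\lambda$ and $w=z/\mu$ are positive multiples of the same nonzero vector $z$; in particular $w=t y$ for $t:=\lambda/\mu>0$.

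The key point, and the step where the constant $\sigma/4$ is exactly what is needed, is to control the middle term in the estimate
\[ \|x_n-x_m\|\leq \|x_n-y\|+\|y-ty\|+\|ty-x_m\|. \]
The two outer terms are each less than $\sigma/4$ by the choice of $y$ and of $w=ty$. For the middle term, since $t>0$ I have $\|y-ty\|=|1-t|\,\|y\|=\big|\,\|y\|-\|ty\|\,\big|=\big|\,\|y\|-\|w\|\,\big|$. Because $\|x_n\|=\|x_m\|=\|x_1\|$ and $y,w$ lie within $\sigma/4$ of $x_n,x_m$ respectively, both $\|y\|$ and $\|w\|$ belong to the interval $(\|x_1\|-\sigma/4,\ \|x_1\|+\sigma/4)$, which has length $\sigma/2$; hence $\big|\,\|y\|-\|w\|\,\big|<\sigma/2$. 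Combining the three bounds gives $\|x_n-x_m\|<\sigma/4+\sigma/2+\sigma/4=\sigma$, contradicting the $\sigma$-separation of $(x_n)_n$.

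I expect the only genuinely delicate point to be the one isolated above: a common point of the two cones need not be written with the same scalar in each cone, so one cannot directly compare $y$ with $w$. Introducing $t=\lambda/\mu$ and rewriting $\|y-ty\|$ as the norm difference $\big|\,\|y\|-\|w\|\,\big|$ is precisely what converts the problem into a comparison of norms trapped in an interval of length $\sigma/2$, after which the three radii add up to exactly $\sigma$. Everything else is routine, and the common-norm hypothesis $\|x_n\|=\|x_1\|$ is used only at this single place.
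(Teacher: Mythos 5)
Your proof is correct: decomposing $\|x_n-x_m\|\leq\|x_n-y\|+\|y-ty\|+\|ty-x_m\|$ and rewriting the middle term as $\bigl|\,\|y\|-\|w\|\,\bigr|<\sigma/2$ using the common-norm hypothesis built into the definition of a $\sigma$-separated sequence is exactly what makes the three bounds sum to $\sigma$ and contradict separation. The present paper states this proposition without proof, citing \cite[Proposition 3.6]{BFT}, and your argument is essentially the same elementary triangle-inequality proof as in that reference, so there is nothing further to reconcile.
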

The core of a set $A\subset X$, denoted by $\textup{core}(A)\subset X$, is the set defined by
\[\textup{core}(A):=\{x\in A: \forall y\in S_X,\exists t>0,~ [x,x+ty)\subset A\}.\]
Let $f: X\to \R$ be a function. We denote $\{f=f(x)\}:=\{z \in X: f(z)=f(x)\}$.
\begin{prop}{\cite[Proposition 3.2]{BFT}}\label{coregateaux}
Let $f: X\to \R$ be a function. If $x\in \textup{core}(\{f=f(x)\})$, then $f$ is G\^ateaux-differentiable at $x$ with G\^ateaux-differential equal to $d_Gf(x)=0$.
\end{prop}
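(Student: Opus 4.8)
The plan is to simply unwind the two definitions and observe that membership in the core forces $f$ to be locally constant along every line through $x$. First I would record that $x\in\{f=f(x)\}$ trivially, so that the core condition at $x$ is meaningful. Then I would fix an arbitrary direction $u\in S_X$. Applying the definition of $\textup{core}(\{f=f(x)\})$ at $x$ to the direction $u$, there is $t_+>0$ with $[x,x+t_+u)\subset\{f=f(x)\}$, that is, $f(x+su)=f(x)$ for all $s\in[0,t_+)$. Applying the same condition to the direction $-u\in S_X$ yields $t_->0$ with $f(x-su)=f(x)$ for all $s\in[0,t_-)$. Combining the two, $f$ is constant and equal to $f(x)$ on the open segment $(x-t_-u,\,x+t_+u)$.

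The second step is to read off differentiability from this local-constancy. For every $u\in S_X$ the preceding step shows that the difference quotient $\tfrac{f(x+tu)-f(x)}{t}$ equals $0$ for all $t$ with $|t|<\min\{t_+,t_-\}$, so its two-sided limit as $t\to0$ is $0$. For a general nonzero $h\in X$ one writes $h=\|h\|u$ with $u=h/\|h\|\in S_X$, and the substitution $t\mapsto \|h\|t$ reduces the quotient in direction $h$ to the one in direction $u$; for $h=0$ the quotient vanishes identically. Hence $\lim_{t\to0}\tfrac{f(x+th)-f(x)}{t}=0$ for every $h\in X$. Since the zero operator lies in $\mathcal{L}(X,\R)$, this is precisely the assertion that $f$ is G\^ateaux differentiable at $x$ with $d_Gf(x)=0$.

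I do not expect any genuine obstacle: the argument is a direct translation between the definition of the core and the definition of the directional limit. The only point deserving minor care is to invoke the core property in both directions $u$ and $-u$ (both of which belong to $S_X$), so as to obtain a true two-sided limit rather than merely a one-sided one, and thereby identify the G\^ateaux differential with the linear map $0$.
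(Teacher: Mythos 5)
Your proof is correct: the paper itself states this proposition without proof (citing [BFT, Proposition 3.2]), and your argument — unwinding the definition of the core in both directions $u$ and $-u$ to get local constancy of $f$ along every line through $x$, hence vanishing two-sided difference quotients in every direction — is exactly the standard argument this citation refers to. Your care in using both $u$ and $-u$ to obtain a genuine two-sided limit is precisely the right point to attend to, given that the paper's definition of $\beta$-differentiability takes $t\to 0$ rather than $t\to 0^+$.
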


Finally, we proceed with the construction of the mentioned Lipschitz function.
\begin{lem}\label{beta lipschitz}
	Let $\beta$ be a vector bornology on $X$, different from the Fr\'echet bornology, satisfying property $(S)$. 
	Let $A\subset X$ be semi-balanced set such that $A\notin \beta$.
	Then, there exist $\sigma>0$ and a $\sigma$-separated sequence $(x_n)_n\subset A$ such that the Lipschitz function ${f:X\to \R}$ defined by 
	
	\[f(x):= \textup{dist}\left( x, X\setminus \bigcup_{n=1}^\infty B\left(\dfrac{x_n}{n},\dfrac{\sigma}{4n}\right)\right),~\text{for all }x\in X,\]
	is $\beta$-differentiable at $0$ but not Fr\'echet-differentiable at $0$.
\end{lem}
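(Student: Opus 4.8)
The plan is to take the zero map as the candidate differential and prove, in one stroke, that $f$ is $\beta$-differentiable at $0$ with $d_\beta f(0)=0$, that it is G\^ateaux differentiable at $0$ (again with zero derivative), yet that it fails to be Fr\'echet differentiable at $0$. Everything hinges on a careful choice of the sequence $(x_n)_n$ and of the parameter $\sigma$, which I would set up first. Applying property $(S)$ to the bounded semi-balanced set $A\notin\beta$ and then Proposition~\ref{sequence property (S)}, I obtain a sequence $(x_n)_n\subset A$ and $\delta>0$ witnessing property $(S)$, with $\|x_n\|=r$ for a common $r>0$ and with no accumulation point. Since $\{x_n:n\}\notin\beta$ and $\beta$ contains every relatively compact set (the Remark after Proposition~\ref{sequence property (S)}), this set is not totally bounded, so a standard extraction gives a $\sigma_0$-separated subsequence; relabelling, I fix $\sigma$ with $0<\sigma<\min\{\sigma_0,r\}$ and $\sigma\le 4\delta$, so that $(x_n)_n$ is $\sigma$-separated and, by Proposition~\ref{sigma/4}, $\sigma/4$-cone separated. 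Writing $c_n:=x_n/n$ and $\rho_n:=\sigma/(4n)$, the balls satisfy $B(c_n,\rho_n)=\tfrac1n B(x_n,\sigma/4)\subset \textup{cone}(B(x_n,\sigma/4))\setminus\{0\}$; hence they are pairwise disjoint and avoid $0$, so that $0\notin U:=\bigcup_n B(c_n,\rho_n)$, the function $f$ is $1$-Lipschitz with $f\ge0$, $f(0)=0$ and $\{f=0\}=X\setminus U$.

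Next I would compute the G\^ateaux derivative. The key geometric fact, furnished by cone separation, is that every ray $\{ty:t>0\}$ with $y\in S_X$ meets at most one ball $B(c_m,\rho_m)$, and every point of that ball has norm at least $(r-\sigma/4)/m>0$; consequently a small initial segment $[0,ty)$ of the ray stays inside $X\setminus U$. This gives $0\in\textup{core}(\{f=f(0)\})$, so Proposition~\ref{coregateaux} yields $d_Gf(0)=0$. For the failure of Fr\'echet differentiability I would test along $c_n\to0$: since the centre of a ball is at distance at least its radius from the complement, $f(c_n)\ge\rho_n$, whence $f(c_n)/\|c_n\|\ge \sigma/(4r)\not\to0$; as the only candidate Fr\'echet differential is the G\^ateaux one, namely $0$, the function is not Fr\'echet differentiable at $0$.

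The main work, and the step I expect to be the real obstacle, is $\beta$-differentiability. I would argue by contradiction: if $d_\beta f(0)=0$ fails, then (using $f\ge0$, $f(0)=0$) there are $A'\in\beta$, $\varepsilon>0$, $t_k\downarrow0$ and $u_k\in A'$ with $f(t_ku_k)\ge\varepsilon t_k$, so each $t_ku_k$ lies in some ball $B(c_{m_k},\rho_{m_k})$. Two balancing estimates then drive the argument. An upper bound $f(t_ku_k)\le\|t_ku_k\|\le(r+\sigma/4)/m_k$, coming from $0\in X\setminus U$, forces the scalar $\lambda_k:=m_kt_k\le(r+\sigma/4)/\varepsilon=:\Lambda$ to be bounded; a lower bound $\|t_ku_k\|\ge(r-\sigma/4)/m_k$, set against $\|t_ku_k\|\le t_k\sup_{A'}\|\cdot\|\to0$, forces $m_k\to\infty$. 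Passing to an increasing subsequence of $(m_k)_k$ and setting $v_k:=m_kt_ku_k$, the membership of $t_ku_k$ in $B(c_{m_k},\rho_{m_k})$ rescales to $\|v_k-x_{m_k}\|<\sigma/4\le\delta$, so property $(S)$ gives $\{v_k:k\}\notin\beta$. On the other hand $v_k=\lambda_ku_k$ with $0<\lambda_k\le\Lambda$, so $v_k\in\Lambda\,\textnormal{bal}(A')$, a $\beta$-set by the vector bornology axioms; hence $\{v_k:k\}\in\beta$, the desired contradiction.

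The delicate point is exactly the design of the rescaling $v_k=m_kt_ku_k$: it must simultaneously land in the $\delta$-neighbourhood of the witness $x_{m_k}$ (so that property $(S)$ applies and forces $\{v_k\}\notin\beta$) and keep the scalar $\lambda_k$ bounded (so that the vector bornology forces $\{v_k\}\in\beta$), and it is precisely the upper bound on $f$, itself a consequence of $0\notin U$, that guarantees this boundedness. Once this tension is exploited, the contradiction closes the proof, the hypothesis $\beta\neq F$ being what makes the whole construction non-vacuous.
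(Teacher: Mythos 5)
Your proof is correct, and its overall skeleton --- the same distance function, G\^ateaux differentiability via cone separation plus the core argument (Propositions~\ref{sigma/4} and~\ref{coregateaux}), failure of Fr\'echet differentiability along $x_n/n$, and a contradiction with property $(S)$ for $\beta$-differentiability --- is the paper's. The genuine difference is how you close the contradiction. The paper splits into two cases according to whether the sequence $(w_k)_k$ from the bad $\beta$-set is norm-null: if it is, it invokes the coincidence of G\^ateaux and Hadamard differentiability for Lipschitz functions to rule that case out; if it is not, it passes to $\|w_k\|\to\nu>0$, extracts a convergent subsequence $n_k t_k\to\lambda>0$, and applies property $(S)$ to the fixed dilation $(\lambda w_k)_k\subset\lambda W\in\beta$ via the estimate $\|\lambda w_k-x_{n_k}\|\le\sigma\le\delta$. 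You avoid the case split altogether: your upper estimate $\varepsilon t_k\le f(t_k u_k)\le\|t_k u_k\|\le (r+\sigma/4)/m_k$ (a consequence of $0\notin U$) bounds the scalars $\lambda_k=m_k t_k$ by $\Lambda$, your lower estimate $\|t_k u_k\|\ge (r-\sigma/4)/m_k$ forces $m_k\to\infty$, and the exact rescaling $v_k=m_k t_k u_k\in B(x_{m_k},\sigma/4)$ simultaneously lands within $\delta$ of the witness sequence (so property $(S)$ yields $\{v_k:k\in\N\}\notin\beta$) and inside the single $\beta$-set $\Lambda\,\textnormal{bal}(A')$ (so $\{v_k:k\in\N\}\in\beta$), a contradiction. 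What each buys: the paper's route needs the external fact that G\^ateaux and Hadamard differentiability coincide for Lipschitz maps, but uses only the scaling axiom of vector bornologies; yours is self-contained and shorter --- no case analysis, no convergent subsequence of scalars, no appeal to Hadamard differentiability --- at the modest cost of invoking the balanced-hull axiom (available here, since $\beta$ is assumed to be a vector bornology) and of normalizing $\sigma\le 4\delta$ in place of the paper's $\sigma\le\delta$, which is immaterial.
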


\begin{proof}
	Let $(x_n)_n\subset A$ and let $\delta>0$ given by property $(S)$. 
	Thanks to Proposition~\ref{sequence property (S)}, we assume that $\| x_n\| =\alpha$, for some $\alpha>0$, for all $n\in \N$.
	Since the sequence $(x_n)_n$ do not have accumulation points, up to a subsequence, we can assume that $(x_n)_n$ is a $\sigma$-separated sequence, for some $0<\sigma \leq \delta$. 
	Let $f:X\to \R$ be the $1$-Lipschitz function on $X$ defined by
	\[f(x):= \textup{dist}\left(x, X\setminus \bigcup_{n=1}^\infty B\left( \dfrac{x_n}{n},\dfrac{\sigma}{4n}\right)\right),~\text{for all }x\in X.\]
	By Proposition~\ref{sigma/4} and Proposition~\ref{coregateaux}, $f$ is G\^ateaux-differentiable at $0$, with G\^ateaux-differential equal to $d_Gf(0)=0$. 
	However, since $nf(x_n/n)=\sigma/4$, $f$ is not Fr\'echet-differentiable at $0$. 
	Finally, it only remains to prove that $f$ is $\beta$-differentiable at $0$. 
	We proceed by contradiction. 
	Suppose, for some set $W\in \beta$, the ratio of differentiability do not converge uniformly on $W$. 
	That is, there exist a null sequence $(t_k)\subset \R^+$, $(w_k)_k\subset W$ and $\varepsilon>0$ such that:
	
	\[\left|\dfrac{f(t_kw_k)}{t_k}\right|\geq \varepsilon, ~\forall n\in \N.\]
	
	Since $f(t_kw_k)>0$, there is a sequence $(n_k)\subset \N$ such that $t_kw_k\in B(x_{n_k}/n_k, \sigma/4n_k)$. 
	Due to the fact that $W$ is bounded and that $(t_kw_k)_k$ converges to $0$, we can assume, up to a subsequence, that $(n_k)_k$ is increasing. 
	We have two different cases now. 
	If the sequence $(w_k)_k$ tends to $0$, the set $\{w_k:~k\in \N\}$ is relatively compact.
	However, the quotient of differentiability at a point of G\^ateaux differentiability converges uniformly on relatively compact sets for Lipschitz functions.
	This contradicts the fact that $\varepsilon>0$. 
	Therefore, we can assume that $(w_k)_k$ is not a norm-null sequence and then, up to a subsequence, the sequence $(\|w_k\|)_k$ converges to some $\nu>0$.
	Since $t_kw_k\in B(x_{n_k}/n_k, \sigma/4n_k)$, then $n_kt_kw_k\in B(x_{n_k},\sigma/4)$, Therefore,
	\[ n_kt_k \in \left[ \dfrac{\alpha}{\|w_k\|}-\dfrac{\sigma}{4\|w_k\|},  \dfrac{\alpha}{\|w_k\|}+\dfrac{\sigma}{4\|w_k\|} \right]. \] 
	Thus, the sequence $ (t_kn_k)_k$ accumulates in $[\frac{\alpha}{\nu}-\frac{\sigma}{4\nu},\frac{\alpha}{\nu}+\frac{\sigma}{4\nu}]$. 
	Passing through a subsequence, we assume that $(t_kn_k)_k$ converges to some $\lambda>0$. 
	Hence, there is $K\in \N$ such that $\lambda w_k\in B(x_{n_k},\sigma)$, for all $k\geq K$. This is a contradiction with the property $(S)$ for the bornology $\beta$ because $(\lambda w_k)_k\subset \lambda W\in \beta$ and $\|\lambda w_k-x_{n_k}\|\leq \sigma\leq \delta$ for all $k\geq K$.
\end{proof}

Let us end this section with the following result, which can be seen as a natural way to construct vector bornologies that satisfies the property $(S)$.
\begin{prop}
Let $X$ and $Z$ be two Banach spaces and let $f:X\to Z$ be a Lipschitz function. Assume that $f$ is G\^ateaux differentiable at $0$. Then, the family $\mathcal{F}$ of nonempty bounded sets $A\subset X$ such that, for every $a\in X$ and every $\lambda \in \R$
\[\lim_{t\to 0} \sup_{x\in a+\lambda A}\left \|\dfrac{f(0+tx)-f(0)}{t}-d_Gf(0)(x)\right\|=0,\]
is a vector  bornology on $X$ satisfying the property $(S)$.
\end{prop}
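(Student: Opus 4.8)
The plan is to work throughout with the remainder
\[R_t(x) := \frac{f(tx)-f(0)}{t} - d_Gf(0)(x), \qquad t\neq 0,\]
so that $A\in\mathcal{F}$ means precisely that $\sup_{x\in a+\lambda A}\|R_t(x)\|\to 0$ as $t\to 0$, for every $a\in X$ and $\lambda\in\R$. Two elementary facts carry the whole argument. First, G\^ateaux differentiability at $0$ says exactly that $R_t(v)\to 0$ as $t\to 0$ for each fixed $v\in X$; in particular every singleton lies in $\mathcal{F}$. Second, writing $M:=\textup{Lip}(f)+\|d_Gf(0)\|$ (finite, since $\|d_Gf(0)\|\leq\textup{Lip}(f)$), the maps $R_t$ are uniformly $M$-Lipschitz: from $R_t(x)-R_t(y)=t^{-1}\big(f(tx)-f(ty)\big)-d_Gf(0)(x-y)$ one gets $\|R_t(x)-R_t(y)\|\leq M\|x-y\|$ independently of $t$. (If $f$ is affine then $R_t\equiv 0$, $\mathcal{F}$ is the bornology of all nonempty bounded sets and property $(S)$ holds vacuously; so I may assume $M>0$.)

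First I would verify the bornology and vector-bornology axioms. Heredity and stability under finite unions are immediate from $a+\lambda(A\cup B)=(a+\lambda A)\cup(a+\lambda B)$ and monotonicity of the supremum, while covering of $X$ follows from singletons lying in $\mathcal{F}$. For the translate-and-dilate axiom, the identity $a+\lambda(x_0+\mu A)=(a+\lambda x_0)+(\lambda\mu)A$ shows that testing $x_0+\mu A$ against $(a,\lambda)$ is the same as testing $A$ against $(a+\lambda x_0,\lambda\mu)$, so $x_0+\mu A\in\mathcal{F}$ whenever $A\in\mathcal{F}$. The only axiom needing an estimate is stability under the balanced hull. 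Here I would fix $(a,\lambda)$ with $\lambda\neq 0$, set $r:=\sup_{z\in A}\|z\|$ (the case $r=0$ being trivial), and for given $\varepsilon>0$ choose a finite $\eta$-net $\mu_1,\dots,\mu_p$ of $[-1,1]$ with $\eta:=\varepsilon/(2M|\lambda|r)$. Any $x=a+\lambda\mu z\in a+\lambda\,\textnormal{bal}(A)$ lies within $M$-Lipschitz distance $\varepsilon/2$ of some $a+\lambda\mu_j z\in a+(\lambda\mu_j)A$; since $A\in\mathcal{F}$ controls each of the finitely many pairs $(a,\lambda\mu_j)$, taking the minimum over $j$ of the corresponding thresholds gives $\sup_{x\in a+\lambda\,\textnormal{bal}(A)}\|R_t(x)\|<\varepsilon$ for small $t$, so $\textnormal{bal}(A)\in\mathcal{F}$.

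The heart of the matter, and the step I expect to be the main obstacle, is property $(S)$. Suppose $A$ is bounded with $A\notin\mathcal{F}$. By definition there is a pair $(a,\lambda)$ for which $\sup_{x\in a+\lambda A}\|R_t(x)\|$ does not tend to $0$; since the case $\lambda=0$ reduces to $\|R_t(a)\|\to 0$, which holds by G\^ateaux differentiability, necessarily $\lambda\neq 0$. Hence there are $\varepsilon_0>0$, a null sequence $(t_m)$ with $t_m\neq 0$, and points $z_m\in A$ with $\|R_{t_m}(a+\lambda z_m)\|>\varepsilon_0$. I would then propose $(z_m)_m\subset A$ and $\delta:=\varepsilon_0/(2M|\lambda|)$ as the witnesses of property $(S)$. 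The point is that the same pair $(a,\lambda)$ witnesses the failure for every perturbed subsequence: given an increasing $(m_k)$ and $(y_k)$ with $\|y_k-z_{m_k}\|\leq\delta$, the uniform Lipschitz bound yields
\[\|R_{t_{m_k}}(a+\lambda y_k)\|\geq \|R_{t_{m_k}}(a+\lambda z_{m_k})\| - M|\lambda|\,\|y_k-z_{m_k}\| > \varepsilon_0 - M|\lambda|\delta = \tfrac{\varepsilon_0}{2}.\]
Since $a+\lambda y_k\in a+\lambda W$ for $W:=\{y_k:k\in\N\}$ and $t_{m_k}\to 0$, the supremum $\sup_{x\in a+\lambda W}\|R_t(x)\|$ cannot converge to $0$, so $W\notin\mathcal{F}$.

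The crucial feature making this uniform over all subsequences and all $\delta$-perturbations is exactly that the Lipschitz constant $M$ of $R_t$ is independent of $t$: this is what permits a single pair $(a,\lambda)$ and a single radius $\delta$ to serve all the admissible sets $W$ simultaneously. Every other step is either a formal reparametrization of the pair $(a,\lambda)$ or a routine finite-net approximation resting on the same uniform Lipschitz estimate.
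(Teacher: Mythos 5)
Your proof is correct and follows essentially the same route as the paper's: the paper normalizes to $f(0)=0$, $d_Gf(0)=0$ where you instead track the remainder $R_t$ and the uniform constant $M$, but the balanced-hull step is the same finite-net argument on $[-1,1]$, and the property $(S)$ step uses the same witnesses (the sequence realizing the failure and $\delta$ comparable to $\sigma/(2|\lambda|\textup{Lip}(f))$) with the same Lipschitz perturbation estimate. The only additions are your explicit treatment of the trivial cases ($\lambda=0$, $f$ affine), which the paper leaves implicit.
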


\begin{proof}
Without loss of generality, we assume that $f(0)=0$ and $d_Gf(0)=0$. Let us first prove that $\mathcal{F}$ is a vector bornology. 
Since $f$ is Gateaux differentiable at $0$, we know that $\{x\}\in \mathcal{F}$ for all $x\in X$. 
Therefore, $\mathcal{F}$ covers $X$.
The other two properties of bornologies are trivially satisfied by $\mathcal{F}$ thanks to the algebra of limits. On the other hand, it is clear from the above limit, that if $a\in X$, $\lambda \in \R$ and $A\in \mathcal{F}$, then, $a+\lambda A\in \mathcal{F}$. It remains to show that if $A\in \mathcal{F}$, then $\textnormal{bal}(A)\in \mathcal{F}$. Indeed, since $[-1,1]$ is compact, then for every $N\in \N\setminus \lbrace 0 \rbrace$, there exists $n_N\in \N$ and $(\beta_i)_{1\leq i\leq n_N}\subset [-1,1]$ such that $[-1,1]=\cup_{i=1}^{n_N} [\beta_i-1/N,\beta_i+1/N]$. Fix $a\in X$ and $\lambda\in \R$, 
\begin{eqnarray*}
\sup_{x\in a+\lambda \textnormal{bal}(A)}\left \|\dfrac{f(tx)}{t}\right\|&=&\sup_{x\in \textnormal{bal}(A)}\left \|\dfrac{f(t(a+\lambda x))}{t}\right\|\\
                                                                                                                   &=& \sup_{\beta\in [-1,1]} \sup_{x\in A}\left \|\dfrac{f(t(a+\lambda \beta x))}{t}\right\|\\
                                                                                                                   &=& \max_{1\leq i\leq n_N} \sup_{\beta\in [\beta_i-1/N,\beta_i+1/N]} \sup_{x\in A}\left \|\dfrac{f(t(a+\lambda \beta x))}{t}\right\|\\
                                                                                                                    &\leq&  \max_{1\leq i\leq n_N} \sup_{\beta\in [\beta_i-1/N,\beta_i+1/N]} \sup_{x\in A}\left \|\dfrac{f(t(a+\lambda \beta_i x))}{t}\right\|\\
                                                                                                                     & & + \max_{1\leq i\leq n_N} \sup_{\beta\in [\beta_i-1/N,\beta_i+1/N]} \sup_{x\in A} \left \|\dfrac{f(t(a+\lambda \beta_i x))}{t}-\dfrac{f(t(a+\lambda \beta x))}{t}\right\|\\
                                                                                                                   &\leq& \max_{1\leq i\leq n_N}\sup_{x\in A}\left \|\dfrac{f(t(a+\lambda \beta_i x))}{t}\right\| \\
                                                                                                                   & & +\max_{1\leq i\leq n_N} \sup_{\beta\in [\beta_i-1/N,\beta_i+1/N]} \sup_{x\in A}\textup{Lip}(f)|\lambda||\beta-\beta_i|\|x\|\\
                                                                                                                    &\leq& \max_{1\leq i\leq n_N}\sup_{x\in A}\left \|\dfrac{f(t(a+\lambda \beta_i x))}{t}\right\| +\frac{|\lambda|\textup{Lip}(f) \sup_{x\in A}\|x\|}{N}\\
                                                                                                                    &=& \max_{1\leq i\leq n_N}\sup_{x\in a +\lambda \beta_i A}\left \|\dfrac{f(tx)}{t}\right\| +\frac{|\lambda|\textup{Lip}(f) \sup_{x\in A}\|x\|}{N}.
\end{eqnarray*}
Since $A\in \mathcal{F}$, tending $t$ to $0$, we get that, for every $N\in \N\setminus \lbrace 0 \rbrace$
\begin{eqnarray*}
\limsup_{t\to 0} \sup_{x\in a+\lambda \textnormal{bal}(A)}\left \|\dfrac{f(tx)}{t}\right\| \leq \frac{|\lambda|\textup{Lip}(f) \sup_{x\in A}\|x\|}{N}.
\end{eqnarray*}
Hence, 
\begin{eqnarray*}
\lim_{t\to 0} \sup_{x\in a+\lambda \textnormal{bal}(A)}\left \|\dfrac{f(tx)}{t}\right\| =0.
\end{eqnarray*}
Thus, $\textnormal{bal}(A)\in \mathcal{F}$ and finally, we have that $\mathcal{F}$ is a vector bornology. \\

Now, let us see that $\mathcal{F}$ satisfies the property $(S)$. Let $A\subset X$ be a bounded set such that $A\notin \mathcal{F}$. 
Therefore there are  $a_0\in X$ and $\lambda_0\in \R\setminus \lbrace 0 \rbrace$,  such that 

\[\lim_{t\to 0} \sup_{x\in a_0+\lambda_0 A}\left \|\dfrac{f(tx)}{t}\right\|>0.\]
Thus, there are $\sigma >0$, $(x_n)_n\subset A$ and $(t_n)_n\subset \R$, convergent to $0$,  such that

\[\left \| \dfrac{f(t_n (a_0+ \lambda_0 x_n))}{t_n}\right \| \geq  \sigma,~ \text{ for all }n\in\N. \]

We claim that the sequence $(x_n)_n$ and $\delta:= \sigma/2|\lambda_0|\textup{Lip}(f)$ witness the property $(S)$ for the set $A$.
Indeed, if $(n_k)_k\subset \N$ is an increasing sequence and $(y_k)_k\subset X$ is a sequence such that $\|x_{n_k}-y_k\|\leq \delta$ equivalently, $\|(a_0+ \lambda_0 x_{n_k})-(a_0+ \lambda_0 y_k)\|\leq |\lambda_0|\delta$, then

\begin{eqnarray*}
\left\| \dfrac{f(t_{n_k} (a_0+ \lambda_0  y_k))}{t_{n_k}}\right \| &\geq& \left \| \dfrac{f(t_{n_k} (a_0+ \lambda_0  x_{n_k}))}{t_{n_k}}\right \| -\left\| \dfrac{f(t_{n_k} (a_0+ \lambda_0  y_k))-f(t_{n_k} (a_0+ \lambda_0  x_{n_k}))}{t_{n_k}}\right \|\\
&\geq& \sigma -\textup{Lip}(f)\dfrac{|\lambda_0| \delta t_{n_k}}{t_{n_k}} \geq \dfrac{\sigma}{2}.
\end{eqnarray*}
Therefore  $\{y_k:k\in\N\}\notin \mathcal{F}$. This ends the proof.
\end{proof}

\section{The main result: Characterization of $\beta_Y$-$\beta_X$-operators.}\label{main result}
In order to prove the main result of this section, we need the following definitions.

\begin{defn}
	Let $X$ and $Y$ be two Banach spaces and let $\beta_X$ and $\beta_Y$ be vector bornologies on $X$ and $Y$ respectively. 
	Let $T: Y\to X$ be any operator (not necessarily linear). 
	We say that $T$ is a $\beta_Y$-$\beta_X$-operator, if  $T(\beta_Y)\subset \beta_X$, that is
	\[T (A) \in \beta_X,~\hspace{0.5cm} \text{for all } A \in \beta_Y.\]

Whenever $\beta_Y$ is the Fr\'echet bornology, we simple write that $T$ is a $\beta_X$-operator.
\end{defn}
 Recall that we denote by $\beta=G, H, L, wH$ and $F$, the G\^ateaux, Hadamard, limited, weak Hadamard and Fr\'echet bornologies respectively. 
 The above definition unifies some classical notions of linear operators, which we recall in the following example.

\begin{ex} Let $X$ and $Y$ be two Banach spaces. Then.

$(i)$ Linear $F$-$H$-operators or $H$-operators coincide with  linear compact operators from $Y$ into $X$.

$(ii)$ Linear $F$-$L$-operators or  $L$-operators coincide with linear limited operators from $Y$ into $X$.

$(iii)$  Linear $F$-$wH$-operators or $wH$-operators coincide with linear weakly compact operators from $Y$ into $X$.

$(iv)$  Linear $wH$-$H$-operators coincide with completely continuous linear operators from $Y$ into $X$.
\end{ex}
We need the following notion of generalized differential in the spirit of Suchomolinov (see for instance \cite[p. 135]{Na} and \cite{MN})  which is a generalization of $\beta$-differentiability.
\begin{defn}\label{ph differentiability}
	Let $X$ and $Y$ be Banach spaces and let $\beta$ be a bornology on $Y$. A map ${T:Y\to X}$ is said to be $\beta$-positively homogeneous differentiable at $y\in Y$  ($\beta$-p.h. differentiable for short) if there is a continuous positively homogeneous function $R:Y\to X$ such that
	
	\[ \lim_{t\to 0^+} \sup_{z\in A} \|\dfrac{T(y+tz)-T(y)}{t}- R(z) \|=0,~ \text{for all }A\in \beta. \]  
	We denote the $\beta$-p.h. differential of $T$ at $y$,  by $R:=d_{\beta}^{ph} T(y)$.
\end{defn}

The following remark summarizes some simple facts about the p.h.-differentiability. In particular, we use the second and fourth points of Remark~\ref{homogeneous} without referring to them.

\begin{rem} \label{homogeneous}

 Let $T:Y\to X$ be an operator (not necessarily linear) and let $\beta$ be a bornology on $Y$. Then: 
 \begin{itemize}
 	\item  If $T$ is $\beta$-p.h. differentiable at $y\in Y$, then the $\beta$-p.h. differential of $T$ at $y$ is unique.
 	\item Definition~\ref{ph differentiability} extends the usual notion of differentiability. 
 	Indeed, if $T$ is $\beta$-differentiable at some $y\in Y$, then $d_\beta T(y)= d_\beta ^{ph} T(y)$. In particular, if $T$ is a linear bounded operator, then $ d_\beta ^{ph} T(y)=d_\beta T(y)=T$, for every $y\in Y$ and any $\beta$ bornology on $Y$.
 	\item If $T$ is a continuous positively homogeneous function, then $T$ is $\beta$-p.h. differentiable at $0$ for any $\beta$ bornology on $Y$ and we have that $d_\beta^{ph} T(0)= T$.
 	\item If $T$ is $\beta$-p.h. differentiable at $y\in Y$, then $d_\beta^{ph}T(y)$ sends bounded sets into bounded sets.
     \item Notice that  every $\beta$-p.h. differentiable operator $T:Y \to X$ such that  $d_{\beta} ^{ph} T(0)=0(=d_{\beta} T(0))$ is such that $f\circ T$ is $\beta$-differentiable, for every Lipschitz function $f : X\to \R$ (not necessarily differentiable). Indeed, we have that 
\[\lim_{t\to 0}\sup_{u\in A}\left| \dfrac{f\circ T(tu)-f\circ T(0)}{t}\right| \leq \textup{Lip}(f) \lim_{t\to 0}\sup_{u\in A}\left\| \dfrac{T(tu)-T(0)}{t} \right\| =0, ~ \text{for any }A\in \beta.\]
 \end{itemize}
\end{rem}

\vskip5mm

Let $X$ and $Y$ be two Banach spaces and let $\beta_X$ and $\beta_Y$ be vector bornologies on $X$ and $Y$ respectively. Then, if  $R: Y\to X$ is a continuous positively homogeneous such that there is $A\in \beta_Y$ satisfying $R(A)\not \in \beta_X$ (that is, $R$ is not $\beta_Y$-$\beta_X$-operator), then, there exists a semi-balanced set $B\in \beta_Y$ such that $R(B)$ is a  semi-balanced, bounded set  that does not belong to $\beta_X$. 
Indeed, thanks to the fact that $R$ is continuous and positively homogeneous, it is enough to consider $B=\textnormal{bal}(A)$, the balanced hull of $A$. 
Now, we are able to state the main result of this work. 
\begin{thm}\label{beta differentiability}
	Let $X$ and $Y$ be two Banach spaces and let $\beta_X$ and $\beta_Y$ be vector bornologies on $X$ and $Y$ respectively.
	Assume that $\beta_X$ satisfies property $(S)$ and let $T: Y\to X$ be a $\beta_Y$-p.h. differentiable operator at $y\in Y$. 
	Then the following assertions are equivalent.

$(i)$ $d_{\beta_Y} ^{ph} T(y)$ is a $\beta_Y$-$\beta_X$-operator

$(ii)$   for every Lipschitz function $f:X\to \R$, $\beta_X$-p.h. differentiable at $x=Ty$, we have that $f\circ T$ is $\beta_Y$-p.h. differentiable at $y$ and $d_{\beta_Y}^{ph} (f\circ T)(y)=d_{\beta_X}^{ph}f(x)\circ  d_{\beta_Y}^{ph}T(y)$

$(iii)$  for every Lipschitz function $f:X\to \R$, $\beta_X$-differentiable at $x=Ty$ with $d_{\beta_X} f(x)=0$, we have that $f\circ T$ is $\beta_Y$-differentiable at $y$ and $d_{\beta_Y}(f\circ T)(y)=0$.
\end{thm}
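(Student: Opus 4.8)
The plan is to establish the cycle of implications $(i)\Rightarrow(ii)\Rightarrow(iii)\Rightarrow(i)$. Throughout I write $R:=d_{\beta_Y}^{ph}T(y)$, $x:=Ty$, and, for $t>0$ and $z\in Y$, set $u_t(z):=\frac{T(y+tz)-Ty}{t}$; the hypothesis that $T$ is $\beta_Y$-p.h. differentiable at $y$ reads $\sup_{z\in A}\|u_t(z)-R(z)\|\to 0$ as $t\to 0^+$, for every $A\in\beta_Y$. The only nontrivial direction is the last one, where Lemma~\ref{beta lipschitz} enters.

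For $(i)\Rightarrow(ii)$, let $f$ be Lipschitz and $\beta_X$-p.h. differentiable at $x$ with differential $S:=d_{\beta_X}^{ph}f(x)$; the candidate differential for $f\circ T$ is $S\circ R$, which is again continuous and positively homogeneous. Fixing $A\in\beta_Y$, I would split the difference quotient by the triangle inequality, using $x=Ty$ and $\textup{Lip}(f)$ on the first term:
\[\left|\dfrac{f(T(y+tz))-f(x)}{t}-S(R(z))\right|\le \textup{Lip}(f)\,\|u_t(z)-R(z)\|+\left|\dfrac{f(x+tR(z))-f(x)}{t}-S(R(z))\right|.\]
The first term tends to $0$ uniformly on $A$ by $\beta_Y$-p.h. differentiability of $T$. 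The second is bounded by $\sup_{w\in R(A)}\big|\frac{f(x+tw)-f(x)}{t}-S(w)\big|$, which tends to $0$ because $R(A)\in\beta_X$ by $(i)$ and $f$ is $\beta_X$-p.h. differentiable at $x$. Uniqueness of the p.h. differential then gives $d_{\beta_Y}^{ph}(f\circ T)(y)=S\circ R$.

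For $(ii)\Rightarrow(iii)$, if $f$ is $\beta_X$-differentiable at $x$ with $d_{\beta_X}f(x)=0$, then $f$ is $\beta_X$-p.h. differentiable at $x$ with p.h. differential the zero map (which is continuous positively homogeneous); applying $(ii)$ yields that $f\circ T$ is $\beta_Y$-p.h. differentiable with differential $0\circ R=0$, and since this differential is the zero map the defining limit is verbatim the statement that $f\circ T$ is $\beta_Y$-differentiable at $y$ with $d_{\beta_Y}(f\circ T)(y)=0$. For $(iii)\Rightarrow(i)$ I argue by contraposition: if $R$ is not a $\beta_Y$-$\beta_X$-operator there is $A\in\beta_Y$ with $R(A)\notin\beta_X$; as $R$ maps bounded sets to bounded sets (Remark~\ref{homogeneous}), $R(A)$ is bounded but not a $\beta_X$-set, which already forces $\beta_X\neq F$. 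By the remark preceding the theorem, choose a semi-balanced $B\in\beta_Y$ with $C:=R(B)$ semi-balanced, bounded, and $C\notin\beta_X$, and apply Lemma~\ref{beta lipschitz} to $\beta_X$ and $C$ to produce $\sigma>0$, a $\sigma$-separated sequence $(x_n)_n\subset C$, and the $1$-Lipschitz $g$ that is $\beta_X$-differentiable at $0$ with $d_{\beta_X}g(0)=0$ but not Fr\'echet-differentiable there. Translating, $f:=g(\cdot-x)$ satisfies the hypotheses of $(iii)$.

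The main obstacle is then to show that this $f$ violates the conclusion of $(iii)$, and this is the heart of the argument. Writing each $x_n=R(b_n)$ with $b_n\in B$, I would take $W:=B\in\beta_Y$ and, along a subsequence, set $t_k:=1/n_k$ and $z_k:=b_{n_k}$. Since $t_k u_{t_k}(z_k)=\frac{x_{n_k}}{n_k}+\frac{1}{n_k}\big(u_{t_k}(z_k)-R(z_k)\big)$ lies within $\eta_k/n_k$ of $x_{n_k}/n_k$, where $\eta_k:=\sup_{z\in B}\|u_{t_k}(z)-R(z)\|\to 0$, the elementary estimate $g(w)\ge \frac{\sigma}{4n_k}-\|w-\frac{x_{n_k}}{n_k}\|$ (valid since $X\setminus\bigcup_n B(x_n/n,\sigma/4n)\subset X\setminus B(x_{n_k}/n_k,\sigma/4n_k)$) gives
\[\dfrac{(f\circ T)(y+t_kz_k)-(f\circ T)(y)}{t_k}=n_k\,g\big(t_k u_{t_k}(z_k)\big)\ge \dfrac{\sigma}{4}-\eta_k.\]
As $(f\circ T)(y)=g(0)=0$ and $\frac{\sigma}{4}-\eta_k\to\frac{\sigma}{4}>0$, the difference quotient does not converge to $0$ uniformly on $W$, so $f\circ T$ fails to be $\beta_Y$-differentiable at $y$ with zero differential, contradicting $(iii)$. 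The delicate point throughout is precisely the interaction between the scaling $t_k=1/n_k$ and the shrinking balls of radius $\sigma/4n_k$: one must keep the perturbation $\eta_k/n_k$ strictly inside the $k$-th ball, which is exactly what the uniform p.h. convergence of $T$ provides.
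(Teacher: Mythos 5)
Your proposal is correct and follows essentially the same route as the paper: the same cycle $(i)\Rightarrow(ii)\Rightarrow(iii)\Rightarrow(i)$, the same Lipschitz/triangle-inequality splitting for $(i)\Rightarrow(ii)$, and the same use of Lemma~\ref{beta lipschitz} with the points $b_{n}/n$ to contradict $(iii)$. The only cosmetic differences are that you translate the test function ($f:=g(\cdot-x)$) where the paper recenters $T$ at the origin, and that you make explicit the (correct) observation that $R(A)$ bounded but outside $\beta_X$ forces $\beta_X\neq F$, a point the paper leaves implicit.
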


Let us start with the proof of Theorem \ref{weak hadamard differentiability} and Theorem \ref{Gateaux-weak hadamard differentiability}.

\begin{proof}[Proof of Theorem~\ref{weak hadamard differentiability}]
	It is a direct application of Theorem~\ref{beta differentiability} where $\beta_X$ is the weakly-Hadamard bornology, $\beta_Y$ is the Fr\'echet bornology. 
	Indeed, Proposition~\ref{weak compact S} asserts that the weakly-Hadamard bornology satisfies property $(S)$.
\end{proof}

\begin{proof}[Proof of Theorem~\ref{Gateaux-weak hadamard differentiability}]
	It is a direct application of Theorem~\ref{beta differentiability} where $\beta_X$ is the Hadamard bornology, $\beta_Y$ is the weak-Hadamard bornology. 
	Indeed, Proposition~\ref{compact S} asserts that the Hadamard bornology satisfies property $(S)$. 
\end{proof}

Also, as a direct consequence of Theorem~\ref{beta differentiability}, using the Fr\'echet bornology on $Y$ and the Hadamard (resp. limited) bornology on $X$, we characterize compact operators (resp. limited operators). 
Thus, we recover the main result of \cite{BFT}. 

\begin{proof}[Proof of Theorem~\ref{beta differentiability}]
$(i)\Longrightarrow (ii)$. This part is straightforward and it does not require property $(S)$.
	Indeed, let $T:Y\to X$ be a $\beta_Y$-p.h. differentiable at $y$, with $\beta_Y$-p.h. differential equal to $R:=d_ {\beta_Y}^{ph}T(y)$, which is a $\beta_Y$-$\beta_X$ operator at $y$ and let $f:Y\to \R$ be a Lipschitz function $\beta_X$-p.h. differentiable at $x=Ty$, with $p:=d_{\beta_X}^{ph} f(x)$. 
	We claim that the $\beta_Y$-p.h. differential of $f\circ T$ at $x$ is equal to $p\circ R$. 
	Indeed, for any $A\in \beta_Y$, we have that for all $u\in A$ and all $t>0$
	
	\begin{align*}
		\sup_{u\in A}\left | \dfrac{f\circ T (y+tu)-f\circ T(y)}{t} - p\circ R(u) \right | &\leq  \sup_{u\in A}\left | \dfrac{f\circ T (x+tu)-   f(T(y)+tR(u))}{t}\right| \\
&\hspace{0.5cm} + \sup_{u\in A}\left|\dfrac{f(T(y)+tR(u))  -   f(T(y))}{t}-p(R(u))  \right |\\
		&\leq \sup_{u\in A}  \textup{Lip}(f) \left \| \dfrac{T(y+tu) -T(y)-tR(u)}{t}  \right \|\\
&\hspace{0.5cm}+ \sup_{v\in R(A)}\left | \dfrac{f(x+tv)-f(x)}{t} -p(v)\right |.
	\end{align*}
Therefore, thanks to the $\beta_Y$-p.h. differentiability of $T$, $R(A)\in \beta_X$ and the $\beta_X$-p.h. differentiability of $f$, sending $t$ to $0$ in the above expression, we obtain that $f\circ T$ is $\beta_Y$-p.h. differentiable at $y$ with $\beta_Y$-p.h. differential  equal to $p\circ R$.\\

$(ii)\Longrightarrow (iii)$. This part is trivial. Notice just that since $d_{\beta_Y}^{ph} (f\circ T)(y)=0$, then $f\circ T$ is necessarily $d_{\beta_Y}$-differentiable at $y$ and  $d_{\beta_Y} (f\circ T)(y)=d_{\beta_Y}^{ph} (f\circ T)(y)=0$.\\

$(iii)\Longrightarrow (i)$. Redefining $T$ by $T:=T(\cdot+y)-T(y)$, we assume without loss of generality that $y=0$ and $T(0)=0$. We proceed by contradiction.  Assume that $R:=d_{\beta_Y} ^{ph} T(0)$ is not a $\beta_Y$-$\beta_X$-operator. 
Therefore, there is a balanced set $A\in \beta_Y$ such that $R(A)$ is a bounded, semi-balanced set and $R(A)\notin \beta_X$. 
Since $\beta_X$ satisfies property $(S)$, Lemma~\ref{beta lipschitz} gives us a $\sigma>0$, a $\sigma$-separated sequence $(x_n)_n\subset R(A)$ and the $1$ Lipschitz function $f:X\to \R$ defined by 
\[ f(x):= \textup{dist} \left(x,~X\setminus \bigcup_{n=1}^\infty B \left(\dfrac{x_n}{n},\dfrac{\sigma}{4n}\right) \right), ~\text{for all }x\in X,\]

which is $\beta_X$-differentiable (and thus $\beta_X$-p.h. differentiable) at $0$ with $d_{\beta_X}^{ph} f(0)=0$. 
Let us see that $f\circ T$  is not $\beta_Y$-differentiable at $0$ neither.
Indeed, let $(y_n)_n\subset A$ such that $R(y_n)=x_n$ for all $n\in \N$.
Finally, notice that (since $f\circ T(0)=0$ and $f$ is positive)

\begin{align*}
	\left|\dfrac{f\circ T (\frac{y_n}{n})-f\circ T (0) }{\frac{1}{n}}\right|&=\dfrac{f\circ T (\frac{y_n}{n})-f\circ T (0) }{\frac{1}{n}}\\
          &=\dfrac{f\circ T (\frac{y_n}{n})-   f (\frac{R(y_n)}{n})}{ \frac{1}{n}}  +
	 \dfrac{f ( \frac{R(y_n)}{n})- f\circ T (0) }{\frac{1}{n}}\\
	&\geq -\textup{Lip}(f) \left\| \dfrac{T(\frac{y_n}{n})-\frac{R(y_n)}{n}}{\frac{1}{n}} \right\| + nf(\frac{x_n}{n})\\
	&= \dfrac{\sigma}{4}-\textup{Lip}(f) \left\| \dfrac{T(\frac{y_n}{n})}{\frac{1}{n}} -R(y_n)\right\|,~\text{for all }n\in \N.
\end{align*}
 By statement $(iii)$, we have that $d_{\beta_Y} f\circ T (0)=0$. 
 However, sending $n$ to infinity in the above expression and using the $\beta_Y$-p.h. differentiability of $T$, we conclude that $0$ is not the $\beta_Y$ differential $f\circ T$ at $0$, which is a contradiction.
\end{proof}

It is a well-known result that  the Fr\'echet derivative of a (not necessarily linear) compact function (resp. completely continuous function)  is  a linear compact (resp. a linear completely continuous) operator. For more information and details, we refer to  \cite{Da, Kr, MePe, Mo, Nu, MN}). We give in the following theorem a generalization of these results, which applies to the more general notions of $\beta$-p.h. differentiability and  the notion of $\beta_Y$-$\beta_X$-operator. We need the following lemma.
\begin{lem} \label{lem1} 
	Let $X$ be a Banach space and let $\beta$ be a vector bornology on $X$ satisfying property $(S)$. Let $A\subset X$ and assume that for every $\varepsilon>0$ there exists $A_\varepsilon\in \beta$ such that $A\subset A_\varepsilon+\varepsilon B_X$. Then $A \in \beta$.
	\end{lem}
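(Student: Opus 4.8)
The plan is to argue by contraposition: I will assume $A \notin \beta$ and exhibit a single $\varepsilon > 0$ for which no $A_\varepsilon \in \beta$ can satisfy $A \subset A_\varepsilon + \varepsilon B_X$, thereby contradicting the hypothesis. The first observation is that the hypothesis already forces $A$ to be bounded: the set $A_\varepsilon + \varepsilon B_X$ is bounded (as $A_\varepsilon \in \beta$ consists of bounded sets), and any subset of a bounded set is bounded. Hence property $(S)$ (Definition~\ref{property S}) is applicable to $A$, which is exactly what makes the argument run.

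Applying property $(S)$ to the bounded set $A \notin \beta$, I obtain a sequence $(x_n)_n \subset A$ together with a threshold $\delta > 0$ enjoying the stated robustness: any sequence shadowing a subsequence of $(x_n)_n$ within distance $\delta$ must fail to lie in $\beta$. I then set $\varepsilon := \delta$ and invoke the hypothesis to pick $A_\delta \in \beta$ with $A \subset A_\delta + \delta B_X$. Since each $x_n$ belongs to $A$, and hence to $A_\delta + \delta B_X$, for every $n$ I can select a point $a_n \in A_\delta$ with $\|x_n - a_n\| < \delta$.

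The contradiction is then immediate. On the one hand, applying property $(S)$ with the increasing sequence $n_k = k$ and the shadowing sequence $y_k = a_k$ (which satisfies $\|a_k - x_k\| < \delta$) yields $\{a_n : n \in \N\} \notin \beta$. On the other hand, $\{a_n : n \in \N\}$ is a subset of $A_\delta \in \beta$, so by the hereditary axiom of bornologies (axiom (3)) it belongs to $\beta$. This contradiction establishes $A \in \beta$.

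I do not expect a genuine obstacle here, since the heart of the matter is simply recognizing that property $(S)$ is precisely the assertion that a set outside $\beta$ carries a sequence robust under $\delta$-perturbations, whereas $\beta$-approximability supplies exactly such a perturbation landing inside a $\beta$-set. The only points demanding a little care are verifying that the hypothesis forces boundedness, so that property $(S)$ genuinely applies, and matching the open-ball inclusion $A \subset A_\delta + \delta B_X$, which gives the strict estimate $\|x_n - a_n\| < \delta$, against the non-strict inequality $\|y_k - x_{n_k}\| \le \delta$ in the definition of property $(S)$.
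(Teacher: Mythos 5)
Your proof is correct and is essentially identical to the paper's: argue by contradiction, apply property $(S)$ to $A$ to get $(x_n)_n$ and $\delta>0$, take $\varepsilon=\delta$, select points of $A_\delta$ within $\delta$ of the $x_n$, and contradict property $(S)$ via the hereditary axiom, since these points form a subset of $A_\delta\in\beta$. Your preliminary observation that the hypothesis forces $A$ to be bounded (so that property $(S)$ is genuinely applicable) is a detail the paper's proof leaves implicit, and it is a worthwhile clarification.
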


\begin{proof}  
Suppose by contradiction that $A\not \in \beta$. By property $(S)$, there are a sequence $(x_n)_n\subset A$ and $\delta >0$ such that for any increasing sequences $(n_k)\subset \N$ and for any sequence $(y_k)_k\subset X$ satisfying $\|y_k-x_{n_k}\| \leq \delta$ for all $k\in\N$, the set $\{y_k:k\in\N\}\notin \beta$. By assumption, with $\varepsilon=\delta$, we have that for every $k\in \N$, there exists $z_k\in A_\delta$ such that $\|z_k-x_{n_k}\| < \delta$. This contradict the property $(S)$, since $\{z_k:k\in\N\}\subset A_\delta \in \beta$.
\end{proof}

\begin{thm} \label{theorem-beta-beta-op} Let $X$ and $Y$ be two Banach spaces and let $\beta_X$ and $\beta_Y$ be vector bornologies on $X$ and $Y$ respectively. Assume that $\beta_X$ satisfies property $(S)$. Let $T: Y\to X$ be a $\beta_Y$-$\beta_X$-operator which is $\beta_Y$-p.h. differentiable  at $y\in Y$. Then, $d_{\beta_Y}^{ph} T(y)$ is a $\beta_Y$-$\beta_X$-operator.
\end{thm}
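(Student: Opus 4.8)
The plan is to fix an arbitrary set $A\in\beta_Y$, write $R:=d_{\beta_Y}^{ph}T(y)$, and prove that $R(A)\in\beta_X$; since $A$ is arbitrary this is exactly the assertion that $R$ is a $\beta_Y$-$\beta_X$-operator. First I would note that $R(A)$ is bounded: by Remark~\ref{homogeneous} the $\beta_Y$-p.h. differential sends bounded sets into bounded sets, and every member of $\beta_Y$ is bounded. The decisive idea is then to invoke Lemma~\ref{lem1} for the bornology $\beta_X$: to conclude $R(A)\in\beta_X$ it suffices to produce, for each $\varepsilon>0$, a set $C_\varepsilon\in\beta_X$ with $R(A)\subset C_\varepsilon+\varepsilon B_X$.

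The natural candidate for $C_\varepsilon$ is a difference quotient of $T$. Applying the $\beta_Y$-p.h. differentiability of $T$ at $y$ to the set $A\in\beta_Y$, for any $\varepsilon>0$ there is $t_\varepsilon>0$ with
\[\sup_{z\in A}\left\|\frac{T(y+t_\varepsilon z)-T(y)}{t_\varepsilon}-R(z)\right\|<\varepsilon.\]
Setting
\[C_\varepsilon:=\left\{\frac{T(y+t_\varepsilon z)-T(y)}{t_\varepsilon}:z\in A\right\}=-\frac{T(y)}{t_\varepsilon}+\frac{1}{t_\varepsilon}T(y+t_\varepsilon A),\]
the displayed inequality reads precisely $R(A)\subset C_\varepsilon+\varepsilon B_X$. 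It then remains to check $C_\varepsilon\in\beta_X$, and here the vector-bornology hypotheses enter: since $\beta_Y$ is a vector bornology, $y+t_\varepsilon A\in\beta_Y$; since $T$ is a $\beta_Y$-$\beta_X$-operator, $T(y+t_\varepsilon A)\in\beta_X$; and since $\beta_X$ is a vector bornology it is stable under translation and rescaling, so $-\frac{T(y)}{t_\varepsilon}+\frac{1}{t_\varepsilon}T(y+t_\varepsilon A)\in\beta_X$. Lemma~\ref{lem1} now yields $R(A)\in\beta_X$, completing the argument.

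Each individual step is short, so the real content lies in the structure rather than in any single computation. The main point is the recognition that Lemma~\ref{lem1}, which packages property $(S)$, converts membership in $\beta_X$ into an approximation problem, and that the defining limit of p.h. differentiability supplies exactly such an approximation of $R(A)$ by difference quotients of $T$, which land in $\beta_X$ because $T$ is a $\beta_Y$-$\beta_X$-operator and both bornologies are stable under affine transformations. The only points requiring mild care are to freeze a single parameter $t_\varepsilon$ (rather than a sequence) so that $C_\varepsilon$ is a genuine set belonging to $\beta_X$, and to record at the outset that $R(A)$ is bounded so that Lemma~\ref{lem1} is applicable; I do not expect either to present a genuine obstacle.
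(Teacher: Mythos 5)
Your proposal is correct and follows essentially the same route as the paper's proof: both fix $A\in\beta_Y$, freeze a single $t_\varepsilon$ from the definition of $\beta_Y$-p.h.\ differentiability to obtain the inclusion $d_{\beta_Y}^{ph}T(y)(A)\subset t_\varepsilon^{-1}\bigl(T(y+t_\varepsilon A)-T(y)\bigr)+\varepsilon B_X$, verify that this difference-quotient set lies in $\beta_X$ via the $\beta_Y$-$\beta_X$-operator hypothesis and the vector-bornology axioms, and conclude with Lemma~\ref{lem1}. Your explicit remarks that $y+t_\varepsilon A\in\beta_Y$ and that $R(A)$ is bounded are steps the paper leaves implicit, but they do not change the argument.
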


\begin{proof} Let $A\in \beta_Y$. From the definition of the $\beta_Y$-p.h. differentiability  of $T$ at $y\in Y$, we have that for every $\varepsilon >0$, there exists $\delta_\varepsilon>0$ such that 
\[  \sup_{z\in A} \|\delta_\varepsilon^{-1}(T(y+\delta_\varepsilon z)-T(y))- d_{\beta_Y}^{ph} T(y)(z) \|\leq \varepsilon. \]  
This shows that, for every $\varepsilon >0$, there exists $\delta_\varepsilon>0$ such that 
\[d_{\beta_Y}^{ph} T(y)(A)\subset \delta_\varepsilon^{-1}(T(y+\delta_\varepsilon A)-T(y)) +\varepsilon B_X.\]
Since $T$ is $\beta_Y$-$\beta_X$-operator and $\beta_X$ is a vector bornology, it follows that $$\delta_\varepsilon^{-1}(T(y+\delta_\varepsilon A)-T(y))\in \beta_X.$$ Thus, using Lemma~\ref{lem1}, we get that $d_{\beta_Y}^{ph} T(y)(A)\in \beta_X$. Hence, $d_{\beta_Y}^{ph} T(y)$ is a $\beta_Y$-$\beta_X$-operator. 
\end{proof}



\section{Alternative proof of Theorem~\ref{limited differentiability} }\label{limited section} 
While G\^ateaux-differentiability and limited-differentiability do not generally coincide outside the Gelfand-Phillips spaces for general Lipschitz functions (see Corollary \ref{theorem0}), they always coincide for convex continuous functions on any Banach space as we show in Proposition \ref{convex differentiability} below. Let us provide an alternative proof of Theorem~\ref{limited differentiability} which was initially given in \cite{B}.
We recall the following two results that can be found in \cite{BV}.

\begin{prop}\cite[Proposition 8.1.1]{BV}\label{convex function}
	Let $\beta$ be a bornology on $X$. 
	Let $(x_n^*)_n\subset X^*$ be a bounded sequence. 
	Let $f:X\to \R$ be the convex function defined by:
	\[f(x)=\sup_n\left\{0,x^*_n(x)-\dfrac{1}{n}\right\}.\]
	Then, $f$ is $\beta$-differentiable at $0$ if and only if $(x^*_n)_n\to_{\tau_\beta} 0$, where $\tau_\beta$ denotes the topology of the uniform convergence on $\beta$-sets on $X^*$. 
\end{prop}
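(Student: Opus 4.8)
The plan is to exploit that, since $f\geq 0=f(0)$, the origin is a global minimizer of the convex function $f$; consequently its differential (in any sense) at $0$ is forced to be $0$, and the whole statement reduces to an asymptotic analysis of the difference quotient $f(tu)/t$. First I would record the elementary reductions. Every singleton is a $\beta$-set (the bornology covers $X$ and is closed under subsets), so $\beta$-differentiability at $0$ implies Gâteaux differentiability at $0$; as $0$ minimizes the convex function $f$, the Gâteaux (hence the $\beta$-) differential must be $0$. Since moreover $f(tu)/t\geq 0$ for $t>0$, the function $f$ is $\beta$-differentiable at $0$ if and only if $\lim_{t\to 0^+}\sup_{u\in A} f(tu)/t=0$ for every $A\in\beta$. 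Writing $f(tu)=\sup_n\{0,\,tx_n^*(u)-1/n\}$ and dividing by $t>0$ gives
\[\frac{f(tu)}{t}=\max\Big(0,\ \sup_n\big(x_n^*(u)-\tfrac1{nt}\big)\Big),\]
and, interchanging the two suprema and pulling $\max(0,\cdot)$ outside,
\[\sup_{u\in A}\frac{f(tu)}{t}=\max\Big(0,\ \sup_n\big(s_n(A)-\tfrac1{nt}\big)\Big),\qquad s_n(A):=\sup_{u\in A}x_n^*(u).\]

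The analytic core is the following limit, valid for any sequence $(b_n)$ bounded above (here $b_n=s_n(A)$, which is bounded above because $A$ is bounded and $(x_n^*)$ is bounded):
\[\lim_{t\to 0^+}\sup_n\Big(b_n-\tfrac1{nt}\Big)=\limsup_n b_n.\]
I would prove this by monotonicity: the left-hand quantity is non-decreasing in $t$, so the limit exists; for the lower bound I evaluate along a subsequence $b_{n_j}\to\limsup_n b_n$ with $n_j\to\infty$, so that for each fixed $t$ the penalty $1/(n_j t)$ can be made arbitrarily small; for the upper bound I use that $b_n<\limsup_n b_n+\varepsilon$ eventually, while the finitely many remaining terms carry $-1/(nt)\to-\infty$. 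Combining this with the previous display and the continuity of $\max(0,\cdot)$ yields
\[\lim_{t\to 0^+}\sup_{u\in A}\frac{f(tu)}{t}=\max\Big(0,\ \limsup_n s_n(A)\Big),\]
so that $f$ is $\beta$-differentiable at $0$ if and only if $\limsup_n\sup_{u\in A}x_n^*(u)\leq 0$ for every $A\in\beta$.

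It then remains to match this condition with $\tau_\beta$-convergence, i.e.\ with $\sup_{u\in A}|x_n^*(u)|\to 0$ for every $A\in\beta$. One implication is immediate, since $\sup_{u\in A}x_n^*(u)\leq\sup_{u\in A}|x_n^*(u)|$ gives $\limsup_n\sup_{u\in A}x_n^*(u)\le 0$ directly. For the converse I would apply the differentiability criterion to the balanced hull $\mathrm{bal}(A)$: because $\sup_{u\in\mathrm{bal}(A)}x_n^*(u)=\sup_{u\in A}|x_n^*(u)|\geq 0$, the criterion forces $\sup_{u\in A}|x_n^*(u)|\to 0$.

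The place where care is needed --- indeed the only step where the bare bornology axioms do not suffice --- is precisely this last one: one must know that the balanced hull of a $\beta$-set is again a $\beta$-set (equivalently, that $\tau_\beta$ is generated by the seminorms over balanced $\beta$-sets), which holds for all the vector bornologies considered here, in particular for the limited bornology to which the result is applied. I expect the main obstacle to be twofold: establishing the uniform limit $\lim_{t\to0^+}\sup_n(s_n(A)-1/(nt))=\limsup_n s_n(A)$ (the interchange of limits together with the competition between $s_n(A)$ and the penalty $1/(nt)$), and handling the passage from $x_n^*$ to $|x_n^*|$ via balanced hulls, the feature of $f$ being that it only detects the positive part of each $x_n^*$.
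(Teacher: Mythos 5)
The paper itself does not prove this proposition --- it is quoted verbatim from \cite[Proposition 8.1.1]{BV} --- so there is no internal proof to compare against; your argument has to be judged against the paper's definitions. Your analytic core is correct and well executed: since $f\geq 0=f(0)$, any candidate differential at $0$ is forced to be $0$ (via singletons, which belong to every bornology); for $t>0$ one has $\sup_{u\in A}f(tu)/t=\max\bigl(0,\sup_n(s_n(A)-\tfrac{1}{nt})\bigr)$ with $s_n(A)=\sup_{u\in A}x_n^*(u)$; and your lemma $\lim_{t\to 0^+}\sup_n\bigl(b_n-\tfrac{1}{nt}\bigr)=\limsup_n b_n$ for sequences bounded above is proved correctly by the monotonicity and eventually-below arguments. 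This correctly identifies the one-sided differentiability condition with $\limsup_n s_n(A)\leq 0$ for all $A\in\beta$.

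The gap is in how you close the argument, and it is a gap about definitions rather than computation. The proposition is stated for an \emph{arbitrary} bornology $\beta$, and for arbitrary bornologies the paper defines $\beta$-differentiability by the two-sided limit $t\to 0$; the one-sided limit $t\to 0^+$ is adopted only after the definition of vector bornology. Because you work one-sidedly throughout, you are forced to invoke $\textnormal{bal}(A)\in\beta$, and you then assert that ``the bare bornology axioms do not suffice'' for the final step. That assertion is false, and it is precisely where your proof fails to establish the stated result: with the two-sided limit, the quotient at $t<0$ over $A$ equals the quotient at $|t|$ over $-A$, so differentiability yields $\limsup_n\sup_{u\in A\cup(-A)}x_n^*(u)\leq 0$, and since $\sup_{u\in A\cup(-A)}x_n^*(u)=\sup_{u\in A}|x_n^*(u)|$, this is exactly $\tau_\beta$-convergence --- no balanced hulls needed. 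The distinction is substantive, because the one-sided statement you actually prove is false for general bornologies: in $X=\ell^2$ take $x_n^*=\langle e_n,\cdot\rangle$ and let $\beta$ consist of all sets $B\cup F$ with $B\subset A_0:=\{-e_m:m\in\N\}$ and $F$ finite; this is a bornology, and $\limsup_n \sup_{u\in A}x_n^*(u)\leq 0$ for every $A\in\beta$ (so $f$ is one-sidedly differentiable at $0$ with derivative $0$ by your criterion), yet $\sup_{u\in A_0}|x_n^*(u)|=1$ for all $n$. For the two bornologies to which the paper applies the proposition (G\^ateaux and limited, both vector bornologies) your version suffices, but the correct repair is not to add the balanced-hull hypothesis: it is to run your computation for both signs of $t$, as the paper's definition of $\beta$-differentiability for a general bornology requires.
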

\begin{thm}{\cite[Theorem 8.1.3]{BV}}\label{beta12}
	Let $X$ be a Banach space and let $\beta_1$ and $\beta_2$ be two bornologies on $X$ such that $\beta_1\subseteq \beta_2$. 
	Then, the following assertions are equivalent.
	\begin{enumerate}
		\item[(a)]$\tau_{\beta_1}$ and $\tau_{\beta_2}$ agree sequentially in $X^*$.
		\item[(b)] $\beta_1$-differentiability and $\beta_2$-differentiability coincide for continuous convex functions.
	\end{enumerate}
\end{thm}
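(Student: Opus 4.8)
The plan is to prove the two implications separately. First I would record the trivial half of each equivalence: since $\beta_1\subseteq\beta_2$, the topology $\tau_{\beta_1}$ is coarser than $\tau_{\beta_2}$, so $\tau_{\beta_2}$-convergence of a sequence automatically entails $\tau_{\beta_1}$-convergence, and likewise $\beta_2$-differentiability of a convex function automatically entails $\beta_1$-differentiability. As both $\tau_{\beta_i}$ are translation-invariant vector topologies, ``agree sequentially'' reduces to the statement that every $\tau_{\beta_1}$-null sequence is $\tau_{\beta_2}$-null. Hence in (a) and in (b) only the \emph{upgrade} from the weaker to the stronger notion is at stake.

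For the implication (b)$\Rightarrow$(a) I would use the explicit convex function of Proposition~\ref{convex function}. Let $(x_n^*)_n$ be a sequence converging to $0$ in $\tau_{\beta_1}$. Since every singleton belongs to any bornology, $\tau_{\beta_1}$-convergence forces pointwise convergence to $0$, so $(x_n^*)_n$ is pointwise bounded and, by the uniform boundedness principle, norm bounded. Then the function $f(x)=\sup_n\{0,x^*_n(x)-1/n\}$ is, by Proposition~\ref{convex function}, $\beta_1$-differentiable at $0$; by hypothesis (b) it is $\beta_2$-differentiable at $0$; and applying Proposition~\ref{convex function} once more yields $(x_n^*)_n\to_{\tau_{\beta_2}}0$. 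This is exactly the required upgrade.

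For the implication (a)$\Rightarrow$(b), which is the substantive direction, I would appeal to the bornological form of \v{S}mulian's lemma (see \cite{BV}): a continuous convex function $g$ is $\beta$-differentiable at a point $x_0$ if and only if $g$ is G\^ateaux differentiable there, with derivative $x_0^*$, and every sequence of subgradients $z_n^*\in\partial g(y_n)$ with $\|y_n-x_0\|\to 0$ satisfies $z_n^*\to x_0^*$ in $\tau_\beta$. The crucial structural point is that the approach condition $\|y_n-x_0\|\to 0$ on the domain side is independent of the bornology, while the mode of convergence of the subgradients on the dual side is governed entirely by $\tau_\beta$. Thus, if $f$ is $\beta_1$-differentiable at $x_0$, then (G\^ateaux differentiability being automatic, as every bornology contains the finite sets) any such subgradient sequence $\tau_{\beta_1}$-converges to $x_0^*$; by the sequential agreement (a) it $\tau_{\beta_2}$-converges to $x_0^*$; and the converse half of \v{S}mulian's lemma then gives $\beta_2$-differentiability of $f$ at $x_0$. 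Since $f$ and $x_0$ were arbitrary, (b) follows.

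The main obstacle is locating and invoking the correct \emph{sequential} form of the bornological \v{S}mulian lemma in step (a)$\Rightarrow$(b): one must phrase $\beta$-differentiability purely through $\tau_\beta$-convergence of subgradients taken at norm-nearby points, so that the bornology enters only through the dual topology and the hypothesis (a) can be applied verbatim. A secondary but necessary point is to observe that such subgradient sequences are bounded, by the local boundedness of the subdifferential of a continuous convex function, so that $\tau_\beta$-convergence is well behaved and the sequential hypothesis (a) is genuinely applicable. The remaining manipulations are routine once this characterization is in hand.
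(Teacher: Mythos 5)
The first thing to note is that the paper contains no proof of Theorem~\ref{beta12} at all: it is quoted verbatim from Borwein--Vanderwerff \cite[Theorem 8.1.3]{BV} and used as a black box (together with Proposition~\ref{convex function}) to derive Proposition~\ref{convex differentiability}. So your proposal can only be measured against the standard argument, not an in-paper one. Measured that way, it is correct and takes the expected route. Your reduction to the two ``upgrade'' implications is right, since $\beta_1\subseteq\beta_2$ makes $\tau_{\beta_1}$ coarser than $\tau_{\beta_2}$ and makes $\beta_2$-differentiability the stronger notion. Your (b)$\Rightarrow$(a) direction is complete as written: singletons belong to every bornology, so a $\tau_{\beta_1}$-null sequence is pointwise null, hence norm-bounded by the uniform boundedness principle, which is exactly what Proposition~\ref{convex function} requires before hypothesis (b) can be applied to the canonical sup-function. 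Your (a)$\Rightarrow$(b) direction is also sound: $\beta_1$-differentiability does give G\^ateaux differentiability with the same derivative for free (finite sets lie in every bornology), and hypothesis (a) applies verbatim to the subgradient sequences.

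The one point I would press you on is that the entire substance of (a)$\Rightarrow$(b) is delegated to the cited ``bornological \u{S}mulian lemma.'' That criterion is true, but in \cite{BV} it is essentially the engine inside the proof of the very theorem you are reconstructing, so invoking it by citation is circular in spirit; a self-contained write-up should include its (short) proof. Both halves are elementary subgradient computations. For the half you need at the end: if $f$ is G\^ateaux differentiable at $x_0$ with derivative $x_0^*$, all subgradient selections at norm-nearby points $\tau_{\beta_2}$-converge to $x_0^*$, and yet $\beta_2$-differentiability fails, then there are $A\in\beta_2$, $\varepsilon>0$, $t_n\to0$ and $a_n\in A$ with quotient error at least $\varepsilon$; choosing $z_n^*\in\partial f(x_0+t_na_n)$, the subgradient inequalities at $x_0$ and at $x_0+t_na_n$ sandwich the difference quotient between $\langle x_0^*,a_n\rangle$ and $\langle z_n^*,a_n\rangle$ (in one order for $t_n>0$, the reverse order for $t_n<0$), whence
\[
\varepsilon\le\left|\frac{f(x_0+t_na_n)-f(x_0)}{t_n}-\langle x_0^*,a_n\rangle\right|\le\sup_{a\in A}\left|\langle z_n^*-x_0^*,a\rangle\right|\longrightarrow 0,
\]
a contradiction. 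The forward half (needed to start from $\beta_1$-differentiability) follows from the subgradient inequality at $y_n$ with increments $\pm ta$ plus local Lipschitzness, which give $\sup_{a\in A}|\langle z_n^*-x_0^*,a\rangle|\le\sup_{0<|s|\le t}\sup_{a\in A}\bigl|\frac{f(x_0+sa)-f(x_0)}{s}-\langle x_0^*,a\rangle\bigr|+2L\|y_n-x_0\|/t$; choose $t$ small, then $n$ large. Note that neither computation needs any closure hypothesis on the bornologies beyond the definition (the paper's $\beta$-differentiability uses a two-sided limit in $t$, which is what handles negative increments), and your remark about local boundedness of the subdifferential is the right one to make the sequences well defined and bounded. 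With the criterion proved rather than cited, your argument is a faithful and complete reconstruction.
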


We know that G\^ateaux differentiability and Hadamard differentiability coincide for real-valued Lipschitz functions on any Banach space. The following proposition can be seen as the analogous of this result for convex functions.

\begin{prop}\label{convex differentiability}
	Let $X$ be a Banach space. G\^ateaux differentiability and limited differentiability coincide for continuous convex functions.
\end{prop}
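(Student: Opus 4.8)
The plan is to invoke Theorem~\ref{beta12} with $\beta_1=G$ the G\^ateaux bornology (finite sets) and $\beta_2=L$ the limited bornology. First I would check the inclusion $G\subseteq L$: every finite set is limited, since a weak${}^*$ null sequence converges pointwise and the supremum of finitely many null scalar sequences is again null. Thus the hypothesis $\beta_1\subseteq\beta_2$ of Theorem~\ref{beta12} is met, and it suffices to prove that $\tau_G$ and $\tau_L$ agree sequentially on $X^*$.

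For one direction, since $G\subseteq L$ the topology $\tau_L$ (uniform convergence on limited sets) is finer than $\tau_G$ (uniform convergence on finite sets), so any $\tau_L$-convergent sequence is $\tau_G$-convergent to the same limit. For the converse I would use that $\tau_G$ is exactly the weak${}^*$ topology, because uniform convergence on finite sets coincides with pointwise convergence. Hence a sequence $(x_n^*)_n$ with $x_n^*\to x^*$ in $\tau_G$ is precisely a weak${}^*$-convergent sequence, and therefore $(x_n^*-x^*)_n$ is weak${}^*$ null. By the very definition of a limited set, $\sup_{x\in A}|\langle x_n^*-x^*,x\rangle|\to 0$ for every limited set $A$; that is, $x_n^*\to x^*$ in $\tau_L$. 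This establishes that $\tau_G$ and $\tau_L$ agree sequentially on $X^*$, and Theorem~\ref{beta12} then yields the coincidence of G\^ateaux and limited differentiability for continuous convex functions.

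The content here is essentially definitional: the only substantial input is the cited Theorem~\ref{beta12}, and the crucial observation is that the defining property of a limited set is exactly the statement that weak${}^*$-null sequences converge to $0$ uniformly on it. Consequently there is no real obstacle — once the inclusion $G\subseteq L$ is verified, the sequential agreement of $\tau_G$ and $\tau_L$ is immediate from the definition of limited sets. This is precisely what distinguishes the convex situation from the general Lipschitz case, where the two notions of differentiability genuinely differ outside the Gelfand--Phillips spaces (cf.\ Corollary~\ref{theorem0}).
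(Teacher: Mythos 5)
Your proposal is correct and follows essentially the same route as the paper: both invoke Theorem~\ref{beta12} with $\beta_1 = G$ and $\beta_2 = L$ and observe that, by the very definition of limited sets, weak${}^*$-null sequences converge to $0$ uniformly on limited sets, which gives the sequential agreement of $\tau_G$ and $\tau_L$. Your write-up is marginally more thorough in explicitly checking the inclusion $G \subseteq L$ and the trivial direction of sequential agreement, which the paper leaves implicit.
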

\begin{proof} We use Theorem~\ref{beta12} with the bornologies $\beta_1$= G\^ateaux and $\beta_2$= limited. 
	Let $(x_n^*)_n\subset X^*$ be a sequence $\tau_{\beta_1}$-convergent to $0$, i.e., $(x_n^*)_n$ is a weak${}^*$-null sequence. 
	Let $A\subseteq X$ be limited set on $X$. 
	By definition of limited set, we have that
	\[\lim_{n\to\infty}\sup_{x\in A}|x^*_n(x)|=0. \]
	Since $A$ was an arbitrary limited set on $X$, we have that $(x^*_n)_n$ converges to $0$ in $\tau_{\beta_2}$. 
	Applying Theorem~\ref{beta12} we obtain the desired result.
\end{proof}

In \cite{B}, the proof of Theorem~\ref{limited differentiability} follows a completely different approach compared to our proof of Theorem~\ref{beta differentiability}. As a direct consequence of Proposition~\ref{convex differentiability}, we obtain a simplified proof of Theorem~\ref{limited differentiability}.

\begin{proof}[Alternative proof of Theorem~\ref{limited differentiability}]
	Thanks to Proposition~\ref{convex differentiability}, the necessity of Theorem~\ref{limited differentiability} is straightforward. 
	In fact, it is analogous to the necessity of Theorem~\ref{beta differentiability}. 
	Conversely, we proceed by contradiction. 
	Let $T\in \mathcal{L}(Y,X)$ be a non-limited operator. 
	Then, there exist a weak${}^*$-null sequence $(x_n^*)_n\subset X^*$ and a sequence $(y_n)_n\subset B_Y$ such that $x^*_n(Ty_n)\geq2$. 
	Let us consider the function $f:X\to\R$ defined by:
	\[f(x)=\max\left\{0,\sup\left\{x^*_n(x)-\dfrac{1}{n}\right\}\right\},~\text{for all } x\in X,\]
	which is G\^ateaux-differentiable at $0$, thanks to Proposition~\ref{convex function}. 
	Since $f$ is a positive function and $f(0)=0$, we know that $d_Gf(0)=0$. In fact, the only candidate to Fr\'echet-differential to $f\circ T$ at $0$ is also the functional $0$. 
	However, the computation 
	\[nf\circ T\left( \dfrac{y_n}{n}\right) \geq n\left( \dfrac{2}{n}-\dfrac{1}{n}\right) =1,\]
	shows that $f\circ T$ is not Fr\'echet-differentiable at $0$.
\end{proof}

\section{Consequences of Theorem~\ref{beta differentiability}}\label{consequences}
We give in this section some applications of our main result.
\subsection{Characterization of a $\beta_2$-$\beta_1$-space}
We start with the following definitions.
\begin{defn}
 Let $X$ be a Banach space and $\beta_1$, $\beta_2$ be two vector bornologies on $X$. We say that $X$ is a $\beta_2$-$\beta_1$-space if $\beta_2 \subset \beta_1$ or equivalently, if the identity $Id:X \to X$ is a $\beta_2$-$\beta_1$-operator. Whenever $\beta_2=F$, we simple say that $X$ is a $\beta_1$-space, which is equivalent in this case to the fact that the closed unit ball $\overline{B}_X\in \beta_1$. 
\end{defn}
\begin{defn}
	A Banach space $X$ is called a Gelfand-Phillips space if all limited sets in $X$ are relatively norm-compact.
\end{defn}
As a simple fact, a Banach space $X$ is Gelfand-Phillips if and only if every limited operator with range in $X$ is compact.
Further information of Gelfand-Phillips spaces can be found in \cite{LD}.
The following proposition summarizes some well known results. 
\begin{prop} \label{prop1} Let $X$ be a Banach space. Then

\begin{enumerate}

    \item $X$ is a $H$-space if and only if $X$ is finite dimensional (this is the Riesz theorem).
    \item $X$ is a $wH$-space if and only if $X$ is reflexive (this is the James theorem).
    \item  $X$ is a $L$-space if and only if $X$ is finite dimensional (this is the Josefson-Nissenzweig theorem).
    \item $X$ is a $wH$-$H$-space if and only if $X$ has the Schur property (easy to see, using the Eberlein-\u{S}mulian Theorem).
    \item $X$ is $L$-$H$-space if and only if $X$ is a Gelfand-Phillips space (simply by coincidence of definitions).
\end{enumerate}
\end{prop}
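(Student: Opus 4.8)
The plan is to unwind each of the five assertions into an equivalent statement about the closed unit ball or about an inclusion of bornologies, and then to invoke the corresponding classical theorem. Recall that the Hadamard, weak-Hadamard and limited bornologies are, by definition, the families of relatively compact, relatively weakly compact and limited sets; that $X$ being a $\beta_1$-space means precisely $\overline{B}_X\in\beta_1$; and that $X$ being a $\beta_2$-$\beta_1$-space means precisely $\beta_2\subset\beta_1$. With these dictionaries in place, most of the work is translation.

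For the first three items I would argue at the level of the unit ball. For $(1)$, being an $H$-space says that $\overline{B}_X$ is relatively norm-compact, which by the Riesz theorem holds if and only if $X$ is finite dimensional. For $(2)$, being a $wH$-space says that $\overline{B}_X$ is relatively weakly compact; since $\overline{B}_X$ is convex and norm-closed it is weakly closed, so this amounts to $\overline{B}_X$ being weakly compact, and the James theorem identifies this property with reflexivity. For $(3)$, the delicate direction is the forward one: to see that an infinite-dimensional $X$ is never an $L$-space, I would use the Josefson-Nissenzweig theorem to produce a weak${}^*$-null sequence $(x_n^*)_n\subset S_{X^*}$ and then select $x_n\in\overline{B}_X$ with $x_n^*(x_n)\geq 1/2$; this witnesses that $\overline{B}_X$ is not limited. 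The converse is immediate, since in finite dimensions the weak${}^*$ and norm topologies coincide and every bounded set is limited.

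For the remaining two items I would argue at the level of bornological inclusions. For $(4)$, the statement $wH\subset H$ reads ``every relatively weakly compact set is relatively norm-compact''. By the Eberlein-\u{S}mulian theorem both forms of compactness are sequential, so this inclusion is equivalent to the assertion that every weakly convergent sequence converges in norm, that is, to the Schur property. For $(5)$, the inclusion $L\subset H$ says exactly that every limited set is relatively norm-compact, which is verbatim the definition of a Gelfand-Phillips space, so nothing further is required.

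The main obstacle is item $(3)$: unlike the others, its nontrivial direction rests on the Josefson-Nissenzweig theorem, a genuinely nonelementary input, whereas items $(1)$, $(2)$, $(4)$ and $(5)$ each reduce to a direct translation followed by an appeal to a standard characterization (Riesz, James, Eberlein-\u{S}mulian, and an unravelling of definitions, respectively).
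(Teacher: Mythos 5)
Your proposal is correct and follows essentially the same route as the paper, which states this proposition as a summary of known results and justifies each item by exactly the classical theorems you invoke (Riesz, James, Josefson--Nissenzweig, Eberlein--\u{S}mulian, and the definition of Gelfand--Phillips spaces). Your write-up merely makes explicit the translations (unit ball membership for items (1)--(3), bornological inclusion for items (4)--(5)) that the paper leaves implicit, and these translations are all carried out correctly.
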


Thanks to Proposition~\ref{prop1} $(1)$ and $(3)$, a Banach space $X$ is a $L$-space if and only if it is a $H$-space. 
The following characterization of a $\beta_Y$-$\beta_X$-space is an easy consequence of Theorem~\ref{beta differentiability}.

\begin{prop} \label{prop2} Let  $X$ be a Banach space and let $\beta_1$ and $\beta_2$  be two vector bornologies on $X$ such that $\beta_1$ satisfies property $(S)$. 
Then, $X$  is a $\beta_2$-$\beta_1$-space (i.e. $\beta_2 \subset \beta_1$) if and only if for every real-valued Lipschitz function $f$ on $X$, $f$ is $\beta_2$-differentiable at some point whenever it is $\beta_1$-differentiable at this point.
\end{prop}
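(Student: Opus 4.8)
The plan is to derive Proposition~\ref{prop2} as a specialization of Theorem~\ref{beta differentiability} to the identity operator. Concretely, I would set $Y=X$, take $\beta_Y=\beta_2$ and $\beta_X=\beta_1$, and consider $T=Id$. Since $Id$ is a bounded linear operator, Remark~\ref{homogeneous} guarantees that $Id$ is $\beta_2$-p.h. differentiable at every point with $d_{\beta_2}^{ph}Id(y)=Id$, and moreover $f\circ T=f$ for every $f$. The hypothesis that $\beta_1$ satisfies property $(S)$ is exactly what Theorem~\ref{beta differentiability} requires of $\beta_X$, so the equivalence (i)$\Leftrightarrow$(iii) is available. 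Under this identification, condition (i) reads ``$Id$ is a $\beta_2$-$\beta_1$-operator'', which by definition is the statement $\beta_2\subset\beta_1$, i.e.\ $X$ is a $\beta_2$-$\beta_1$-space.

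For the forward implication I would argue directly, without invoking the theorem: if $\beta_2\subset\beta_1$ and $f$ is $\beta_1$-differentiable at $x$ with differential $\ell$, then for every $A\in\beta_2$ we also have $A\in\beta_1$, so the difference quotients converge to $\ell$ uniformly on $A$; hence $f$ is $\beta_2$-differentiable at $x$ with the same differential. This direction uses nothing beyond the set inclusion $\beta_2\subset\beta_1$.

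The converse is where Theorem~\ref{beta differentiability} does the work. Assuming that every real-valued Lipschitz $f$ which is $\beta_1$-differentiable at a point is also $\beta_2$-differentiable there, I would verify condition (iii) of the theorem (with $T=Id$, $y=x$). Let $f$ be Lipschitz and $\beta_1$-differentiable at $x$ with $d_{\beta_1}f(x)=0$; the hypothesis gives that $f$ is $\beta_2$-differentiable at $x$, so that (iii) holds, and the theorem then yields (i), namely $\beta_2\subset\beta_1$, completing the proof.

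The one point requiring care --- and the only genuine obstacle --- is the apparent gap between the unconditional differentiability transfer assumed in Proposition~\ref{prop2} and the normalized condition (iii) of Theorem~\ref{beta differentiability}, which insists that the $\beta_1$-differential vanish and concludes that the $\beta_2$-differential also vanishes. This is resolved by two standard facts: every bornology contains all finite sets (being hereditary and covering $X$), so that $\beta$-differentiability always implies G\^ateaux differentiability, and the $\beta$-differential is unique. Consequently, whenever $f$ is simultaneously $\beta_1$- and $\beta_2$-differentiable at $x$, both differentials coincide with the G\^ateaux differential; in particular, if $d_{\beta_1}f(x)=0$ then necessarily $d_{\beta_2}f(x)=0$. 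This reconciliation makes condition (iii) precisely the specialization of the hypothesis to functions with vanishing $\beta_1$-differential, and closes the argument.
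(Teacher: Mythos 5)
Your proposal is correct and follows essentially the same route as the paper, whose entire proof is to apply Theorem~\ref{beta differentiability} to the identity operator; your identification of condition (i) with $\beta_2\subset\beta_1$ and your use of (iii)$\Rightarrow$(i) for the converse is exactly that specialization. The extra care you take (the direct argument for the easy direction, and the observation that uniqueness of the G\^ateaux differential forces $d_{\beta_2}f(x)=d_{\beta_1}f(x)=0$ so that the hypothesis really does yield condition (iii)) fills in details the paper leaves implicit, and does so correctly.
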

\begin{proof} It is enough to apply Theorem~\ref{beta differentiability} to the identity operator $Id$ on $X$.
\end{proof}
Proposition \ref{prop1} and Proposition \ref{prop2} immediately give the following characterizations of  known spaces. 
Up to the best of our knowledge, Corollary~\ref{theorem0} and Corollary~\ref{theorem00} are new results while Corollary~\ref{theorem01} and Corollary~\ref{theorem02} are known results in \cite{BFV}.
\begin{cor} \label{theorem0}  
	Let $X$ be a Banach space. Then, $X$ is a Gelfand-Phillips space if and only if G\^ateaux-differentiability (equivalently Hadamard-differentiability) and  limited-differentiability coincide for real-valued Lipschitz functions on $X$.  
\end{cor}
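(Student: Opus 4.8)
The plan is to read the corollary off directly from Proposition~\ref{prop2}, using the dictionary recorded in Proposition~\ref{prop1}. First I would invoke Proposition~\ref{prop1}$(5)$: a Banach space $X$ is Gelfand-Phillips if and only if it is an $L$-$H$-space, that is, if and only if the limited bornology $L$ is contained in the Hadamard bornology $H$ (equivalently, every limited set is relatively norm-compact). This reduces the statement to an inclusion of bornologies, which is exactly the hypothesis appearing in Proposition~\ref{prop2}.

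Next I would apply Proposition~\ref{prop2} with $\beta_1=H$ and $\beta_2=L$. The hypothesis of Proposition~\ref{prop2} requires $\beta_1$ to satisfy property $(S)$, and this is guaranteed for the Hadamard bornology by Proposition~\ref{compact S}. Proposition~\ref{prop2} then yields the equivalence: $L\subset H$ if and only if every real-valued Lipschitz function on $X$ that is Hadamard-differentiable at a point is also limited-differentiable at that point.

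It then remains to upgrade this one-sided implication to the symmetric statement ``coincide'' in the corollary. Here the key observation is that every relatively norm-compact set is limited (a bounded, hence equicontinuous, weak${}^*$-null sequence of functionals converges uniformly on a compact set), so $H\subset L$ holds for \emph{every} Banach space; consequently limited-differentiability always implies Hadamard-differentiability, with no hypothesis on $X$. Therefore the implication supplied by Proposition~\ref{prop2} (Hadamard-differentiability implies limited-differentiability) is exactly the missing direction, and together with the automatic one it expresses that Hadamard-differentiability and limited-differentiability coincide for Lipschitz functions on $X$. Finally, since G\^ateaux and Hadamard differentiability agree for real-valued Lipschitz functions on any Banach space, I may replace Hadamard by G\^ateaux throughout, obtaining the stated corollary.

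I expect no serious obstacle: the argument is essentially bookkeeping. The only point requiring care is the correct assignment of which bornology plays the role of $\beta_1$ and which of $\beta_2$ (so that the property-$(S)$ hypothesis lands on $H$), together with the remark that the automatic inclusion $H\subset L$ is precisely what converts the single implication of Proposition~\ref{prop2} into a genuine coincidence of the two notions of differentiability.
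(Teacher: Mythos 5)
Your proposal is correct and follows exactly the paper's route: the paper derives this corollary immediately from Proposition~\ref{prop1}$(5)$ and Proposition~\ref{prop2} (with $\beta_1=H$ satisfying property $(S)$ by Proposition~\ref{compact S} and $\beta_2=L$), and your write-up simply makes explicit the two bookkeeping points the paper leaves implicit, namely the automatic inclusion $H\subset L$ and the coincidence of G\^ateaux and Hadamard differentiability for Lipschitz functions.
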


It is a well known result  that a Banach space $X$ is of finite dimensional if and only if G\^ateaux-differentiability and Fr\'echet differentiability coincide for real-valued Lipschitz functions. On the other hand, Corollary \ref{theorem0} shows that outside Gelfand-Phillips spaces,  G\^ateaux-differentiability  and  limited-differentiability  do not coincide in general  for real-valued Lipschitz functions on $X$. We obtain the following characterization of finite dimensional spaces in terms of  the coincide between limited-differentiability and  Fr\'echet-differentiability of real-valued Lipschitz functions.

\begin{cor} \label{theorem00}  
	Let $X$ be a Banach space. Then, $X$ is finite dimensional if and only if limited-differentiability and  Fr\'echet-differentiability coincide for real-valued Lipschitz functions on $X$.  
\end{cor}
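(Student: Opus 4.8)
The plan is to obtain Corollary~\ref{theorem00} as a direct composition of Proposition~\ref{prop2} with the Josefson--Nissenzweig characterization recorded in Proposition~\ref{prop1}$(3)$, so that essentially no new computation is needed. The key preliminary observation I would make is that the coincidence of two notions of differentiability here is genuinely one-sided. Since every limited set is bounded, we have the bornological inclusion $L \subseteq F$; consequently, if $f$ is Fréchet-differentiable ($F$-differentiable) at a point, the defining uniform limit already holds over every $F$-set and hence over every $L$-set, so $f$ is automatically limited-differentiable there with the same differential. Therefore, for real-valued Lipschitz functions the statement ``limited-differentiability and Fréchet-differentiability coincide'' is equivalent to the single implication ``$f$ is Fréchet-differentiable at each point where it is limited-differentiable.''

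Next I would check that the hypotheses of Proposition~\ref{prop2} are met. Taking $\beta_1 = L$ (the limited bornology) and $\beta_2 = F$ (the Fréchet bornology), Proposition~\ref{limited S} guarantees that $\beta_1 = L$ satisfies property~$(S)$, which is exactly what Proposition~\ref{prop2} requires. Applying that proposition then yields: $X$ is an $F$-$L$-space if and only if every real-valued Lipschitz function $f$ on $X$ is $F$-differentiable at each point where it is $L$-differentiable. By the definition of a $\beta_2$-$\beta_1$-space with $\beta_2 = F$, being an $F$-$L$-space means precisely $F \subseteq L$, that is, $\overline{B}_X$ is limited, i.e.\ $X$ is an $L$-space. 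Combining this with the reduction from the first paragraph, $X$ is an $L$-space if and only if limited-differentiability and Fréchet-differentiability coincide for real-valued Lipschitz functions on $X$.

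Finally I would close the argument by invoking Proposition~\ref{prop1}$(3)$, which states via the Josefson--Nissenzweig theorem that $X$ is an $L$-space if and only if $X$ is finite dimensional. Chaining this equivalence with the one established above gives the claim. The argument is routine once the right instance of Proposition~\ref{prop2} is identified; the only point that deserves a moment's care, and the place where a careless reader might stumble, is the reduction of the symmetric ``coincidence'' to the asymmetric implication handled by Proposition~\ref{prop2}, which I emphasize rests solely on the trivial inclusion $L \subseteq F$ rather than on any property of $X$.
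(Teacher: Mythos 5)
Your proof is correct and is exactly the paper's intended argument: the paper obtains Corollary~\ref{theorem00} by applying Proposition~\ref{prop2} with $\beta_1=L$ (which satisfies property $(S)$ by Proposition~\ref{limited S}) and $\beta_2=F$, and then identifying $L$-spaces with finite-dimensional spaces via Proposition~\ref{prop1}$(3)$. Your explicit reduction of the symmetric ``coincidence'' statement to the one-sided implication, using the trivial inclusion $L\subseteq F$, is precisely the step the paper leaves implicit.
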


\begin{cor} \label{theorem01}  (see \cite[Theorem 1.4]{BFV}) A Banach space $X$ is a reflexive space if and only if weak-Hadamard differentiability and Fr\'echet differentiability coincide for real-valued Lipschitz functions.
\end{cor}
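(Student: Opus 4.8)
The plan is to obtain Corollary~\ref{theorem01} as a direct specialization of Proposition~\ref{prop2}, taking $\beta_1 = wH$ (the weak-Hadamard bornology) and $\beta_2 = F$ (the Fr\'echet bornology). First I would check the standing hypothesis of Proposition~\ref{prop2}, namely that $\beta_1 = wH$ satisfies property $(S)$; this is precisely the content of Proposition~\ref{weak compact S}. With this verified, Proposition~\ref{prop2} applies and yields the equivalence: $X$ is an $F$-$wH$-space (that is, $F \subset wH$) if and only if, for every real-valued Lipschitz function $f$ on $X$, $f$ is Fr\'echet-differentiable at any point where it is weak-Hadamard-differentiable.

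Next I would translate the geometric side of this equivalence into reflexivity. By definition, $X$ being an $F$-$wH$-space means $F \subset wH$, which is the same as saying that the closed unit ball $\overline{B}_X$ belongs to the weak-Hadamard bornology, i.e.\ that $X$ is a $wH$-space. By Proposition~\ref{prop1}$(2)$ (the James theorem), this holds precisely when $X$ is reflexive. Thus the left-hand side of the equivalence produced by Proposition~\ref{prop2} is exactly reflexivity of $X$.

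Finally I would reconcile the differentiability side with the word ``coincide'' appearing in the statement. Since every relatively weakly compact set is bounded, the weak-Hadamard bornology is contained in the Fr\'echet bornology, $wH \subset F$, and hence Fr\'echet differentiability always implies weak-Hadamard differentiability. Consequently the only nontrivial implication hidden in the phrase ``weak-Hadamard and Fr\'echet differentiability coincide'' is that weak-Hadamard differentiability forces Fr\'echet differentiability, which is exactly the condition delivered by Proposition~\ref{prop2}. Combining the three steps gives that $X$ is reflexive if and only if the two notions of differentiability coincide for real-valued Lipschitz functions on $X$.

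I do not expect any genuine obstacle here beyond careful bookkeeping: the analytic substance is already carried by Theorem~\ref{beta differentiability} and its corollary Proposition~\ref{prop2}. The only points demanding attention are the correct assignment of which bornology plays the role of $\beta_1$ (it must be the one satisfying property $(S)$, namely $wH$) and which plays $\beta_2$, together with the elementary observation that the inclusion $wH \subset F$ renders one of the two implications in ``coincide'' automatic, so that the statement of Proposition~\ref{prop2} captures exactly the remaining direction.
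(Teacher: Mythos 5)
Your proposal is correct and follows exactly the paper's route: the paper derives Corollary~\ref{theorem01} immediately from Proposition~\ref{prop2} (with $\beta_1=wH$, which satisfies property $(S)$ by Proposition~\ref{weak compact S}, and $\beta_2=F$) combined with Proposition~\ref{prop1}$(2)$, i.e.\ the James theorem. Your additional bookkeeping—identifying $F$-$wH$-space with $\overline{B}_X\in wH$ and noting that $wH\subset F$ makes one direction of ``coincide'' automatic—is precisely the implicit content of the paper's one-line deduction.
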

\begin{cor} \label{theorem02} (see \cite[Theorem 4.1]{BFV}) A Banach space $X$ has the Schur property if and only if G\^ateaux differentiability and weak Hadamard differentiability coincide for real-valued Lipschitz functions.
\end{cor}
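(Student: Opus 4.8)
The plan is to combine Proposition~\ref{prop1}$(4)$ with Proposition~\ref{prop2}, using the classical fact that G\^ateaux and Hadamard differentiability coincide for real-valued Lipschitz functions on any Banach space. First I would recall from Proposition~\ref{prop1}$(4)$ that $X$ has the Schur property if and only if $X$ is a $wH$-$H$-space, that is, if and only if the weak-Hadamard bornology is contained in the Hadamard bornology ($wH\subset H$). Since the reverse inclusion $H\subset wH$ always holds (relatively norm-compact sets are relatively weakly compact), this inclusion says precisely that relatively weakly compact and relatively norm-compact sets coincide, which by the Eberlein-\u{S}mulian theorem is exactly the Schur property.

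Next I would apply Proposition~\ref{prop2} with $\beta_1=H$ (the Hadamard bornology) and $\beta_2=wH$ (the weak-Hadamard bornology). The hypothesis of Proposition~\ref{prop2} requires $\beta_1$ to satisfy property $(S)$, which is exactly the content of Proposition~\ref{compact S}. Thus Proposition~\ref{prop2} yields: $X$ is a $wH$-$H$-space if and only if every real-valued Lipschitz function on $X$ that is $H$-differentiable at a point is automatically $wH$-differentiable at that same point.

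The final step is to translate the Hadamard condition into a G\^ateaux one. Because G\^ateaux and Hadamard differentiability coincide for real-valued Lipschitz functions, the property ``$f$ is $wH$-differentiable wherever it is $H$-differentiable'' is the same as ``$f$ is $wH$-differentiable wherever it is $G$-differentiable''. Moreover, since $G\subset wH$ as bornologies, $wH$-differentiability always implies $G$-differentiability, so this one-sided implication is precisely the \emph{coincidence} of G\^ateaux and weak-Hadamard differentiability for Lipschitz functions. Chaining these equivalences through the equivalence of the first step then gives the corollary.

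The main subtlety, rather than a genuine obstacle, is that one cannot apply Proposition~\ref{prop2} directly with the G\^ateaux bornology as $\beta_1$: the remark following Proposition~\ref{sequence property (S)} shows that any bornology satisfying property $(S)$ must contain all relatively compact sets, and the G\^ateaux (finite-set) bornology does not. The device that circumvents this is exactly the classical equality of G\^ateaux and Hadamard differentiability for Lipschitz functions, which lets me run the argument through the Hadamard bornology (which does satisfy property $(S)$) while stating the conclusion in terms of G\^ateaux differentiability.
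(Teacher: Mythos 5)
Your proposal is correct and follows exactly the paper's own route: the paper derives this corollary by combining Proposition~\ref{prop1}$(4)$ (Schur $\Leftrightarrow$ $wH$-$H$-space) with Proposition~\ref{prop2} applied to $\beta_1=H$, $\beta_2=wH$, which is legitimate since the Hadamard bornology satisfies property $(S)$ by Proposition~\ref{compact S}. Your closing remark about why one must route the argument through the Hadamard bornology (rather than the G\^ateaux bornology, which fails property $(S)$) and then invoke the classical coincidence of G\^ateaux and Hadamard differentiability for Lipschitz functions is precisely the implicit step the paper relies on.
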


\vskip5mm 
\subsection{Spaceability and differentiability in the space of Lipschitz functions}
Let $\beta_X$ and $\beta_Y$ be vector bornologies on $X$ and $Y$ respectively such that $\beta_X$ satisfies Property $(S)$.
Let $T\in \mathcal{L}(X,Y)$ be a non-$\beta_Y$-$\beta_X$-operator.
Then, the set of Lipschitz functions, $\beta_X$ differentiable at $0$ such that $f\circ T$ is not $\beta_Y$-differentiable at $0$, denoted by $\mathcal{F}$, is dense in the space of Lipschitz, $\beta_X$-differentiable functions at $0$ (for the topology generated by the Lipschitz seminorm).
In what follows, we want to measure the size of $\mathcal{F}$ in an algebraic sense. 
To do this, let us introduce the following concepts.
Let $\alpha$ be a cardinal number. 
A set $A\subset X$ is said $\alpha$-lineable if $A\cup \{0\}$ contains a subspace of dimension $\alpha$. 
A set $A\subset X$ is said $\alpha$-spaceable, if $A\cup\{0\}$ contains a closed subspace of dimension $\alpha$. 
The following corollary states that the set $\mathcal{F}$ is $c$-spaceable, meaning that it contains an isometric copy of a Banach space of dimension of the continuum. 
More on lineability and spaceability can be found in \cite{ABPS,ABMP,GQ} and references therein.
The following result extends \cite[Corollary 3.9]{BFT}.
\\

\begin{cor}
	Let $X$ and $Y$ be two Banach spaces. 
	Let $\beta_X$ and $\beta_Y$ be a vector bornologies on $X$ and $Y$ respectively such that $\beta_X$ satisfies property $(S)$.
	Let $T: Y\to X$ be a $\beta_Y$-p.h. differentiable operator at $0$ such that $d_{\beta_Y} ^{ph}T(0)$ is not a $\beta_Y$-$\beta_X$-operator. 
	The set of real-valued Lipschitz functions in $\textup{Lip}_0(X)$ which are $\beta_X$-differentiable at $0$, with $\beta_X$-differential equal to $0$, but $f\circ T$ is not $\beta_Y$-differentiable at $0$, contains a subspace isometric to $\ell^\infty(\N)$, up to the function $0$.
\end{cor}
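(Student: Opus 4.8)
The plan is to construct an explicit linear isometry $\Phi\colon \ell^\infty(\N)\to \textup{Lip}_0(X)$ whose nonzero image is contained in the prescribed set, so that $\Phi(\ell^\infty(\N))$ is the desired closed subspace of dimension of the continuum. The building blocks are the ``bumps'' already produced in Lemma~\ref{beta lipschitz}. First I would reproduce the opening of the proof of Theorem~\ref{beta differentiability}: since $R:=d_{\beta_Y}^{ph}T(0)$ is not a $\beta_Y$-$\beta_X$-operator, I pick a balanced $A\in \beta_Y$ with $R(A)$ bounded, semi-balanced and $R(A)\notin \beta_X$, and apply Lemma~\ref{beta lipschitz} to $R(A)$ to obtain $\sigma>0$ and a $\sigma$-separated sequence $(x_n)_n\subset R(A)$ with $\|x_n\|=\alpha$. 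By Proposition~\ref{sigma/4} the balls $B_n:=B(x_n/n,\sigma/4n)$ lie in pairwise disjoint cones and hence are pairwise disjoint. I then fix a partition $\N=\bigsqcup_{m\in\N} N_m$ into infinitely many infinite sets and set $f_m(x):=\textup{dist}\big(x,\,X\setminus\bigcup_{n\in N_m}B_n\big)$. For $a=(a_m)_m\in\ell^\infty(\N)$ I define $\Phi(a):=\sum_m a_m f_m$; because the supports $\bigcup_{n\in N_m}B_n$ are pairwise disjoint, at each point at most one summand is nonzero, so $\Phi(a)$ is well defined and $\Phi$ is linear.

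Next I would verify that $\Phi$ is an isometry into $\textup{Lip}_0(X)$. The domination $|\Phi(a)(x)|\le \|a\|_\infty f(x)$, with $f$ the full Lemma~\ref{beta lipschitz} function, already gives $\Phi(a)(0)=0$. For the Lipschitz upper bound I would split a pair $x,y$: if they lie in a common ball I use that each $f_m$ is $1$-Lipschitz; otherwise the segment $[x,y]$ must leave $\bigcup_n B_n$ (the pairwise disjoint open balls are exactly the connected components of the union), so there is a point of the complement on $[x,y]$, whence $f(x)+f(y)\le \|x-y\|$ and $|\Phi(a)(x)-\Phi(a)(y)|\le \|a\|_\infty\big(f(x)+f(y)\big)\le \|a\|_\infty\|x-y\|$. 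For the reverse inequality, inside a single ball $B_n$ with $n\in N_m$ the function $\Phi(a)$ coincides with $a_m\big(\sigma/4n-\|\cdot-x_n/n\|\big)$, which has radial slope $|a_m|$; taking the supremum over $m$ yields $\textup{Lip}(\Phi(a))=\|a\|_\infty$. Since an isometric image of a Banach space is complete, $\Phi(\ell^\infty(\N))$ is a closed subspace isometric to $\ell^\infty(\N)$.

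Finally, for $a\neq 0$ I would check membership in the target set. The $\beta_X$-differentiability of $\Phi(a)$ at $0$ with differential $0$ follows at once from $|\Phi(a)|\le\|a\|_\infty f$ and the $\beta_X$-differentiability of $f$ at $0$. To show $\Phi(a)\circ T$ is \emph{not} $\beta_Y$-differentiable at $0$, I would first observe that any candidate $\beta_Y$-differential must equal the G\^ateaux differential, which is $0$: indeed $T(tz)/t\to R(z)$ and $f(tR(z))/t\to0$, so $\Phi(a)(T(tz))/t\to0$ for each $z$. Then, choosing $m_0$ with $a_{m_0}\neq0$ and $y_n\in A$ with $R(y_n)=x_n$ for $n\in N_{m_0}$, the computation in the proof of Theorem~\ref{beta differentiability} gives $n\,|\Phi(a)(T(y_n/n))|\ge |a_{m_0}|\sigma/4-\textup{Lip}(\Phi(a))\,\|nT(y_n/n)-x_n\|$, whose right-hand side tends to $|a_{m_0}|\sigma/4>0$ by the $\beta_Y$-p.h. differentiability of $T$; as $(y_n)\subset A\in\beta_Y$, this contradicts $\beta_Y$-differentiability with differential $0$.

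The hard part will be the isometry step, specifically proving that gluing the disjoint bumps with $\ell^\infty$ coefficients yields a globally $\|a\|_\infty$-Lipschitz function via the segment-through-the-complement argument and matching it with the lower bound; the second delicate point is the remark that the only possible $\beta_Y$-differential of $\Phi(a)\circ T$ is $0$, which is precisely what lets me recycle the contradiction already established in Theorem~\ref{beta differentiability}.
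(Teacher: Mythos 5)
Your proposal is correct and follows essentially the same route as the paper: the paper also splits the disjoint bumps of Lemma~\ref{beta lipschitz} into infinitely many infinite subfamilies (indexed there by powers of primes rather than by a generic partition of $\N$), sums them with $\ell^\infty$ coefficients to get an isometric embedding into $\textup{Lip}_0(X)$, and reuses the computation from Theorem~\ref{beta differentiability} to kill $\beta_Y$-differentiability of the compositions. Your extra details (the segment-through-the-complement bound, the domination $|\Phi(a)|\le\|a\|_\infty f$ for $\beta_X$-differentiability, and the explicit $|a_{m_0}|$ factor) are welcome elaborations of steps the paper only sketches, not a different method.
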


\begin{proof}
	Let us fix $R= d_{\beta_Y}^{ph} T (0)$, which is a continuous positively homogeneous function from $Y$ to $X$.
	Let $A\subset Y$ be a bounded semi-balanced set such that $A\in \beta_Y$ but the bounded, semi-balanced set $R(A)\notin \beta_X$.
	Let $\sigma>0$ and let $(x_{n})_n\subset R(A)$ be a $\sigma$-separated sequence given by Lemma~\ref{beta lipschitz}. 
	Let $(y_n)_n\subset A$ such that  $R(y_{n})=x_{n}$. 
	For each $p\in \N$ prime number define the sets ${B_{p,n}=B(\frac{x_{p^{n}}}{p^{n}},\frac{\sigma}{4p^{n}})}$ and ${B_{p}:=\cup_{n}B_{p,n}}$. 
	As in Lemma~\ref{beta lipschitz}, for each $p\in \N$, we define $f_p:X\to \R$ by 
	\[f_{p}(x)=\textup{dist}(x,X\setminus B_p),~\text{for all }x\in X,\] 
	which is $1$-Lipschitz, $\beta_X$-differentiable at $0$, with $d_{\beta_X}f(0)=0$.
	Moreover, as in the proof of Theorem~\ref{beta differentiability}, $f_p\circ T$ is not $\beta_Y$-differentiable at $0$. 
	In what follows, $(p_{i})_{i}$ stands for an enumeration of the prime numbers.
	Clearly, the supports of the functions $\{f_{p_{i}}:~i\in \N\}$ intersects only at $0$.
	Therefore, $(f_{p_{i}})_i\subset \textup{Lip}_{0}(X)$ is a sequence of linearly independent functions. 
	Moreover, if $\mu\in\ell^{\infty}(\N)$, the function
	\[ f_{\mu}(x): =\sum_{i=1}^{\infty}\mu_if_{p_i}(x), \]
	is well defined (because for each $x\in X$ there is at most one non-zero term in the series) and is $\|\mu\|_{\infty}$-Lipschitz. 
	On the other hand, since $f_\mu=\mu_i f_{p_i}$ on $\textup{supp}(f_{p_i})$, we have that $\textup{Lip}(f_\mu)\geq \sup\{|\mu_i|:~i\in\N \}$.
	That is, the linear operator $L:\ell^{\infty}(\N)\to\textup{Lip}_{0}(X)$ given by $L\mu = f_{\mu}$ is an isometry. 
	Since $(x_n)_n$ is a $\sigma$-separated sequence, Proposition~\ref{sigma/4} implies that $0\in\textup{core} (X\setminus \cup_k B_{p_i^k})$ and, thanks to Proposition~\ref{coregateaux}, $f_\mu$ is G\^ateaux-differentiable at $0$. 
	Moreover, an analogous argument of the proof of Lemma~\ref{beta lipschitz} shows that $f_\mu$ is, in fact, $\beta_X$-differentiable at $0$.
	However, if $\mu\in \ell^\infty(\N)$ and $\mu\neq 0$, $f_{\mu}\circ T$ is not $\beta_Y$-differentiable at $0$. 
	Indeed, observe first that the only candidate for $\beta_Y$-(p.h.) differential of $f_\mu\circ T$ at $0$ is the functional $0$. 
	Let $k\in \N$ such that $\mu_k\neq 0$. 
	Since $A\in \beta_Y$ and recalling that $R=d_{\beta_Y}^{ph}T(0)$, for any $n\in \N$ we have that
	
	\begin{align*}
	 \dfrac{f_\mu \circ T(y_{p_{k}^{n}}/p_{k}^{n})-f_\mu\circ T(0)}{1/p_{k}^{n}}   
		&= \dfrac{f_\mu \circ T(y_{p_{k}^{n}}/p_{k}^{n})-f_\mu \circ R(y_{p_{k}^{n}}/p_{k}^{n})}{1/p_{k}^{n}}   +\dfrac{f_\mu \circ R(y_{p_{k}^{n}}/p_{k}^{n})-f_\mu\circ T(0)}{1/p_{k}^{n}} \\
		&\geq - \textup{Lip}(f_\mu)\left\| \dfrac{ T(y_{p_{k}^{n}}/p_{k}^{n})-R(y_{p_{k}^{n}}/p_{k}^{n})}{1/p_{k}^{n}}  \right \| + p_k^n f_\mu (x_{p_k^n}/p_k^n )\\
		&= \dfrac{\sigma}{4} -\textup{Lip}(f_\mu)\left\| \dfrac{ T(y_{p_{k}^{n}}/p_{k}^{n})-R(y_{p_{k}^{n}}/p_{k}^{n})}{1/p_{k}^{n}}  \right \|.
	\end{align*}
	
	Therefore, since $R$ is the $\beta_Y$-p.h. differential of $T$ and $(y_{p_k^n})\subset A\in \beta_Y$, by sending $n$ to infinity we obtain that the last expression tends to $\sigma/4$. 
	Hence, $f_\mu\circ T$ is not $\beta_Y$ differentiable at $0$.
\end{proof}

\subsection{Non-linear strict cosingularity and almost weakly compactness.}

In this last subsection we give an extension of the so-called Bourgain-Diestel Theorem, see \cite{BD} or \cite{BFT}. 
Recall from \cite{BD} (resp. from \cite{H}) that a bounded linear operator $T : Y\to X$  between Banach spaces $Y$ and $X$  is called
strict cosingular (resp. almost weakly compact) if the only Banach spaces $E$ for which one can find a  bounded linear operator $q: X \to E$ for which $q \circ T$ is surjective,  are finite dimensional (resp. are reflexive spaces). The strict cosingularity and almost weakly compactness, are generalizations of compact and weakly compact operators respectively. 
A generalization of (linear) strict cosingularity can be found in \cite{wH}. \\

We give in the following definitions a non-linear extension of the notion of strict cosingularity and almost weakly compactness respectively. 
Recall the notion of co-Lipschitz mapping introduced in  \cite{BJLPS,JLPS}: a mapping $L : X\to E$ is called co-Lipschitz if there is $c>0$ such that 
\[L(x)+crB_E \subset L(x + rB_X ),~ \text{for all }x\in X,~r>0, \]
Notice that a bounded linear operator $L:X\to E$ is co-Lipschitz if and only if it is surjective by the Open-mapping theorem. 

\begin{defn} 
	Let  $X$ and $Y$ be two Banach spaces. An operator (not necessarily linear) $T : Y\to X$ is said to be strict cosingular (resp. strong strict cosingular) if the following property holds:  for every Banach space $E$, if there exists a linear bounded operator $q: X \to E$ (resp. a Lipschitz map, limited differentiable at $0$ with $q(0)=0$), such $q \circ T$ is  co-Lipschitz, then $E$ is a finite dimentional.
\end{defn}
\begin{defn} 
	Let  $X$ and $Y$ be two Banach spaces. An operator (not necessarily linear) $T : Y\to X$ is said to be almost weakly compact (resp. strong almost weakly compact) if the following property holds:  for every Banach space $E$, if there exists a linear bounded operator $q: X \to E$ (resp. a Lipschitz map, weak Hadamard differentiable at $0$ with $q(0)=0$), such $q \circ T$ is  co-Lipschitz, then $E$ is a reflexive space.
\end{defn}

Clearly, the strong strict cosingularity (resp. the strong almost weakly compactness) implies the strict cosingularity (the almost weakly compactness). Bourgain and Diestel proved in \cite{BD} that a limited linear bounded operator (i.e. linear $L$-operator) is strict cosingular. 
Theorem \ref{beta-cosingular} gives an extension to the  Bourgain-Diestel result, where the operator $T$ is not assumed to be necessarily linear but only its (generalized positively homogeneous) differential at $0$ is supposed to be a limited operator. Moreover, Theorem \ref{beta-cosingular} gives the strong strict cosingularity. A similar result concerning  weak Hadamard bornologies and strong almost weakly compactness is given  in Theorem \ref{beta-cosingular-bis}.  

\begin{thm} \label{beta-cosingular} Let  $X$ and $Y$ be two Banach spaces. Let $T: Y\to X$ be a (not necessarily linear) operator $F$-p.h. differentiable at some point $y\in Y$ such that $d_F^{ph} T(y)$ is a limited operator. Then, $T$ is strong strict cosingular.
\end{thm}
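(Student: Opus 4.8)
The plan is to reduce the statement to the Josefson--Nissenzweig theorem. Fix a Banach space $E$ and a Lipschitz map $q\colon X\to E$ that is limited-differentiable at $0$ with $q(0)=0$, and suppose $q\circ T$ is co-Lipschitz; the goal is to show that the closed unit ball $\overline{B}_E$ is limited, so that $E$ is an $L$-space and hence, by Proposition~\ref{prop1}$(3)$, finite dimensional. Normalising so that $Ty=0$ (the relevant case, compatible with $q(0)=0$ and with the differentiability of $q$ at $0$), I write $R:=d^{ph}_FT(y)$, which is positively homogeneous with $R(B_Y)$ limited by hypothesis, and $p:=d_Lq(0)\in\mathcal{L}(X,E)$.

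The heart of the argument is a uniform difference-quotient estimate. From the $F$-p.h.\ differentiability of $T$ at $y$ I can write $T(y+rz)=r\bigl(R(z)+\rho_r(z)\bigr)$ with $\sup_{z\in B_Y}\|\rho_r(z)\|\to 0$ as $r\to 0^+$. Since $q$ is $\textup{Lip}(q)$-Lipschitz, the error term $\rho_r$ contributes at most $\textup{Lip}(q)\sup_z\|\rho_r(z)\|\to 0$, so it suffices to control $r^{-1}q(rR(z))$. Here is the key point: because $R(B_Y)$ is \emph{limited} and $q$ is limited-differentiable at $0$, the quotients $r^{-1}\bigl(q(rw)-q(0)\bigr)$ converge to $p(w)$ uniformly for $w\in R(B_Y)$. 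Combining these two facts yields $\lim_{r\to0^+}\sup_{z\in B_Y}\|r^{-1}q(T(y+rz))-p(R(z))\|=0$, that is, for every $\varepsilon>0$ and all small $r>0$, one has the inclusion $r^{-1}q\bigl(T(y+rB_Y)\bigr)\subseteq p(R(B_Y))+\varepsilon B_E$.

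Next I would feed in the co-Lipschitz hypothesis. Applied at the point $y$ it gives a constant $c>0$ with $q(Ty)+crB_E\subseteq q(T(y+rB_Y))$ for all $r>0$; since $q(Ty)=q(0)=0$, dividing by $r$ gives $cB_E\subseteq r^{-1}q(T(y+rB_Y))$. Combined with the inclusion from the previous paragraph this produces $cB_E\subseteq p(R(B_Y))+\varepsilon B_E$ for every $\varepsilon>0$. Now $K:=p(R(B_Y))$ is a limited subset of $E$, because $R(B_Y)$ is limited and the bounded linear operator $p$ carries limited sets to limited sets (its adjoint is weak$^*$--weak$^*$ continuous and maps weak$^*$-null sequences to weak$^*$-null sequences). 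Thus $cB_E\subseteq K+\varepsilon B_E$ with $K\in L$ for all $\varepsilon>0$, and Lemma~\ref{lem1} applied to the limited bornology on $E$ (which satisfies property~$(S)$ by Proposition~\ref{limited S} when $E$ is infinite dimensional) forces $cB_E\in L$; hence $B_E$, and so $\overline{B}_E$, is limited, $E$ is an $L$-space, and $E$ is finite dimensional.

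The main obstacle is the uniform estimate in the second paragraph: it is precisely the hypothesis that $R(B_Y)$ is limited that allows the limited-differential of $q$ at $0$ to act uniformly on the image $R(B_Y)$, while the Lipschitz bound on $q$ is what lets the first-order error $\rho_r$ of $T$ be absorbed. A secondary subtlety is the matching of base points, which is why one works at a point with $Ty=0$, so that the differentiability data of $q$ (given at $0$) can be applied along the curves $r\mapsto T(y+rz)$.
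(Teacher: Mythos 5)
Your proof is correct, and it takes a genuinely different route from the paper's. The paper deduces Theorem~\ref{beta-cosingular} from the abstract Lemma~\ref{theorem1} (applied with $\beta_Y=\beta_E'=F$ and $\beta_X=\beta_E=L$), whose proof is a contradiction argument built on test functions: if $E$ were not an $L$-space, Proposition~\ref{prop2} (i.e.\ Theorem~\ref{beta differentiability} together with the construction of Lemma~\ref{beta lipschitz}) supplies a Lipschitz $f:E\to\R$ that is limited- but not Fr\'echet-differentiable at $0$; the co-Lipschitz hypothesis shows $f\circ q\circ T$ is not Fr\'echet-differentiable at $0$, while Theorem~\ref{beta differentiability} applied to $q$ shows $f\circ q$ is limited-differentiable at $0$, contradicting Theorem~\ref{beta differentiability} applied to $T$. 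You never invoke any of this machinery: you work directly at the level of sets, proving the uniform quotient estimate $\sup_{z\in B_Y}\|r^{-1}q(T(y+rz))-p(R(z))\|\to 0$ (correctly exploiting that limited-differentiability of $q$ acts uniformly on the limited set $R(B_Y)$, while the Lipschitz bound on $q$ absorbs the first-order error of $T$), and combining it with co-Lipschitzness to get $cB_E\subseteq p(R(B_Y))+\varepsilon B_E$ for every $\varepsilon>0$; since a bounded linear operator carries limited sets to limited sets, Lemma~\ref{lem1} with Proposition~\ref{limited S} gives that $B_E$ is limited, and Proposition~\ref{prop1}(3) (Josefson--Nissenzweig) finishes. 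Your argument is more elementary and quantitative, replacing Theorem~\ref{beta differentiability} and the Lipschitz-function construction by the much lighter Lemma~\ref{lem1}, and it transfers verbatim to Theorem~\ref{beta-cosingular-bis} (replace ``limited'' by ``relatively weakly compact'' throughout); what it gives up is the generality of Lemma~\ref{theorem1}, which treats arbitrary bornologies satisfying $(S)$ and the weaker $\beta_E'$-$\beta_Y$-co-Lipschitz hypothesis in one stroke. One caveat you should be aware of, but which is not a defect relative to the paper: your normalization $Ty=0$ is also implicitly required in the paper's own proof, since its reduction $S=T(\cdot+y)-T(y)$ turns the global co-Lipschitz hypothesis on $q\circ T$ into the hypothesis needed for $q\circ S$ only when $T(y)=0$ (for nonlinear $q$, co-Lipschitzness of $q\circ T$ says nothing about $y'\mapsto q(T(y'+y)-T(y))$); so both proofs really address the case $T(y)=0$, which you flag explicitly.
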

\begin{thm} \label{beta-cosingular-bis} Let  $X$ and $Y$ be two Banach spaces. Let $T: Y\to X$ be a (not necessarily linear) operator $F$-p.h. differentiable at some point $y\in Y$ such that $d_{F}^{ph} T(y)$ is a weakly compact operator. Then, $T$ is strong almost weakly compact.
\end{thm}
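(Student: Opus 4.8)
The plan is to transport the linear Bourgain--Diestel argument to this non-linear setting: co-Lipschitzness of $q\circ T$ will force a ball of $E$ to sit inside the image under $q\circ T$ of a ball of $Y$, and then the two differentials linearise this image into the continuous linear image of a relatively weakly compact set, which is again relatively weakly compact and hence makes $E$ reflexive. First I would normalise the differentiability data: after translating the domain I take $y=0$ and work under the normalisation $T(0)=0$, so that the base point of $X$ at which the chain rule must operate is exactly the point $0$ where $q$ is weak-Hadamard differentiable with $q(0)=0$ (this alignment is precisely what the definition of strong almost weak compactness is tailored to). Write $R:=d_F^{ph}T(0)$, which is weakly compact, so that $R(B_Y)$ is relatively weakly compact, and $L:=d_{wH}q(0)\in\mathcal{L}(X,E)$. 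Evaluating the co-Lipschitz inequality for $q\circ T$ at the origin produces a constant $c>0$ with
\[ c\,rB_E\subset q\big(T(rB_Y)\big),\qquad\text{for all } r>0. \]

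The analytic core is to show that $cB_E\subset L\big(\overline{R(B_Y)}^{\,w}\big)$. Fix $e\in cB_E$. For each $n\in\N$, applying the inclusion with $r=1/n$ produces $u_n\in B_Y$ with $e=n\,q\big(T(u_n/n)\big)$. Setting $v_n:=n\,T(u_n/n)$, the $F$-p.h. differentiability of $T$ at $0$ gives $\|v_n-R(u_n)\|\to 0$, since the convergence of the difference quotients is uniform over $B_Y\ni u_n$. Because $R(B_Y)$ is relatively weakly compact, by the Eberlein--\v{S}mulian theorem I may pass to a subsequence along which $R(u_{n_k})\rightharpoonup w$ for some $w\in\overline{R(B_Y)}^{\,w}$; the norm-estimate then yields $v_{n_k}\rightharpoonup w$ as well, and the set $K:=\overline{\{v_{n_k}:k\in\N\}}^{\,w}$ is weakly compact. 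Now I invoke the weak-Hadamard differentiability of $q$ at $0$, which is uniform over the relatively weakly compact set $K$: taking $t=1/n_k$ and the moving point $v_{n_k}\in K$, the identity $e=q\big((1/n_k)v_{n_k}\big)/(1/n_k)$ forces $\|e-L(v_{n_k})\|\to 0$. Since $L$ is bounded linear, hence weak-to-weak continuous, $L(v_{n_k})\rightharpoonup L(w)$; comparing with the norm convergence $L(v_{n_k})\to e$ and using uniqueness of weak limits gives $e=L(w)\in L\big(\overline{R(B_Y)}^{\,w}\big)$.

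To conclude, $\overline{R(B_Y)}^{\,w}$ is weakly compact and $L$ is weak-to-weak continuous, so $L\big(\overline{R(B_Y)}^{\,w}\big)$ is weakly compact; hence $cB_E$ is contained in a weakly compact set, and passing to weak closures shows that $\overline{B}_E$ is weakly compact. By the James--Kakutani theorem $E$ is reflexive, proving that $T$ is strong almost weakly compact. The companion Theorem~\ref{beta-cosingular} would follow the very same scheme, replacing the weak-Hadamard bornology by the limited one and relative weak compactness by the property of being a limited set, the closing step being that a ball contained in a limited set forces finite dimensionality through the Josefson--Nissenzweig theorem.

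The main obstacle will be the double limiting process in the analytic core: the point $v_{n_k}$ at which the difference quotient of $q$ is evaluated itself moves with $k$, so the mere pointwise differentiability of $q$ is insufficient to pass to the limit. The device that resolves this is to freeze a single weakly compact set $K$ containing the entire tail $\{v_{n_k}\}$, extracted via Eberlein--\v{S}mulian, so that the uniform convergence built into weak-Hadamard differentiability applies simultaneously to all the moving evaluation points. A secondary point requiring care is the base-point alignment $T(0)=0$, needed so that the differential of $q$ at $0$ is the correct object to feed into the chain rule.
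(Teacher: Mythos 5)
Your argument is correct at its core, and it is a genuinely different proof from the one in the paper. The paper's proof is a two-line reduction: it applies Lemma~\ref{theorem1} (with $\beta_Y=\beta'_E=F$ and $\beta_X=\beta_E=wH$) to $S=T(\cdot+y)-T(y)$, and Lemma~\ref{theorem1} is itself proved by contradiction through the differentiability machinery: if $E$ were not reflexive, Proposition~\ref{prop2} (i.e.\ Theorem~\ref{beta differentiability} applied to the identity, resting on property $(S)$ for the weak-Hadamard bornology and on the function built in Lemma~\ref{beta lipschitz}) would give a Lipschitz $f:E\to\R$ that is $wH$- but not Fr\'echet-differentiable at $0$; the co-Lipschitz property then shows $f\circ q\circ T$ is not Fr\'echet-differentiable at $0$, contradicting Theorem~\ref{beta differentiability}. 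You bypass all of this machinery: from the exact ball inclusion $crB_E\subset q(T(rB_Y))$ you extract points $u_n\in B_Y$, linearise $T$ via uniformity of the $F$-p.h.\ quotient on the bounded set $B_Y$, use Eberlein--\u{S}mulian on the relatively weakly compact set $R(B_Y)$ to freeze a single weakly compact set containing all the moving points $v_{n_k}$, linearise $q$ via uniformity of the $wH$-quotient on that set, and conclude $c\overline{B}_E\subset L(\overline{R(B_Y)}^{\,w})$, so $\overline{B}_E$ is weakly compact and $E$ is reflexive. Every limit interchange checks out, and freezing the compact set before invoking $wH$-differentiability is exactly the right device for the moving evaluation point. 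As for what each approach buys: yours is elementary and self-contained (no property $(S)$, no Lemma~\ref{beta lipschitz}, no Theorem~\ref{beta differentiability}) and yields a quantitative inclusion of the ball of $E$ into the image of a fixed weakly compact set; the paper's is softer but works uniformly for all bornologies satisfying property $(S)$ and under the weaker asymptotic $\beta'_E$-$\beta_Y$-co-Lipschitz hypothesis of Lemma~\ref{theorem1}, whereas your argument uses the exact co-Lipschitz inclusion and tools specific to weak compactness. (Your closing remark on Theorem~\ref{beta-cosingular} is also sound: bounded operators map limited sets to limited sets, and a space with limited unit ball is finite dimensional by Josefson--Nissenzweig.)

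One caveat, which you flag as ``base-point alignment'' but do not actually resolve: the normalisation $T(0)=0$ is not free. Since $q$ is non-linear, co-Lipschitzness of $q\circ T$ does not pass to $q\circ(T(\cdot+y)-T(y))$, and if $T(y)\neq 0$ your linearisation of $q$ would have to take place at $T(y)$, where $q$ is not assumed differentiable. You should know, however, that the paper's own proof suffers from exactly the same defect: it feeds $S=T(\cdot+y)-T(y)$ into Lemma~\ref{theorem1} and invokes Remark~\ref{Drem}, which requires $q\circ S$ to be co-Lipschitz --- and this follows from the hypothesis only when $T(y)=0$. So this is an imprecision in the statement and definitions themselves rather than a gap specific to your argument; both proofs are complete exactly under the implicit normalisation $T(y)=0$.
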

The proofs of Theorem~\ref{beta-cosingular} and Theorem~\ref{beta-cosingular-bis} are given as an immediate consequence of  Lemma~\ref{theorem1} at the end of the paper. 
Using the above theorems, together with Theorem~\ref{theorem-beta-beta-op}, we get the following corollaries.

\begin{cor} Let  $X$ and $Y$ be two Banach spaces. Let $T: Y\to X$ be a (not necessarily linear) operator $F$-p.h. differentiable at some point $y\in Y$. Suppose that $T$ is a $L$-operator. Then, $T$ is strong strict cosingular.
\end{cor}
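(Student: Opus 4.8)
The plan is to reduce the corollary to Theorem~\ref{beta-cosingular} by transferring the limitedness hypothesis from $T$ itself to its positively homogeneous differential $d_F^{ph}T(y)$, the bridge being Theorem~\ref{theorem-beta-beta-op}.

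First I would unwind the terminology. Saying that $T$ is an $L$-operator means exactly that $T$ is an $F$-$L$-operator, i.e. $T(A)$ is a limited subset of $X$ for every bounded set $A\subset Y$. By Proposition~\ref{limited S}, the limited bornology $L$ on $X$ satisfies property $(S)$, so the standing hypothesis of Theorem~\ref{theorem-beta-beta-op} is met with $\beta_X=L$ and $\beta_Y=F$.

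Next I would invoke Theorem~\ref{theorem-beta-beta-op} with these two bornologies. Since $T$ is a $\beta_Y$-$\beta_X$-operator (an $F$-$L$-operator) and is $\beta_Y$-p.h. differentiable ($F$-p.h. differentiable) at $y$, the theorem yields that $R:=d_F^{ph}T(y)$ is again an $F$-$L$-operator; that is, the continuous positively homogeneous map $R$ sends every bounded set of $Y$ to a limited set of $X$, which is precisely the statement that $R$ is a limited operator.

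Finally, with the differential $d_F^{ph}T(y)$ now known to be a limited operator and $T$ being $F$-p.h. differentiable at $y$, Theorem~\ref{beta-cosingular} applies verbatim and gives that $T$ is strong strict cosingular, which completes the argument. There is no genuine obstacle here: the whole content is organizational, and the only point requiring care is the verification that $L$ satisfies property $(S)$ so that Theorem~\ref{theorem-beta-beta-op} is applicable — this being supplied by Proposition~\ref{limited S}.
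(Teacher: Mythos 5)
Your proposal is correct and coincides with the paper's own argument: the paper derives this corollary precisely by applying Theorem~\ref{theorem-beta-beta-op} with $\beta_Y=F$ and $\beta_X=L$ (property $(S)$ for $L$ being Proposition~\ref{limited S}) to conclude that $d_F^{ph}T(y)$ is a limited operator, and then invoking Theorem~\ref{beta-cosingular}. Nothing is missing from your reduction.
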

\begin{cor} Let  $X$ and $Y$ be two Banach spaces. Let $T: Y\to X$ be a (not necessarily linear) operator $F$-p.h. differentiable at some point $y\in Y$. Suppose that $T$ is a $wH$-operator. Then, $T$ is strong almost weakly compact.
\end{cor}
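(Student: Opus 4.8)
The plan is to derive this corollary by chaining Theorem~\ref{theorem-beta-beta-op} with Theorem~\ref{beta-cosingular-bis}, exactly as the sentence preceding the corollaries suggests, so that no new construction is needed. First I would unpack the hypothesis: saying that $T$ is a $wH$-operator means precisely that $T$ is an $F$-$wH$-operator, i.e. taking $\beta_Y=F$ (the Fr\'echet bornology on $Y$) and $\beta_X=wH$ (the weak-Hadamard bornology on $X$), $T$ sends every bounded subset of $Y$ into a relatively weakly compact subset of $X$. By Proposition~\ref{weak compact S}, the bornology $wH$ satisfies property $(S)$, so the standing assumption "$\beta_X$ satisfies property $(S)$" required by Theorem~\ref{theorem-beta-beta-op} is met.

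Next I would apply Theorem~\ref{theorem-beta-beta-op} with these two bornologies. Since $T$ is an $F$-$wH$-operator that is $F$-p.h. differentiable at $y$, that theorem yields that $R:=d_F^{ph}T(y)$ is again an $F$-$wH$-operator; equivalently, the continuous positively homogeneous map $R$ sends $B_Y$ into a relatively weakly compact subset of $X$, i.e. $R$ is weakly compact in the sense demanded by Theorem~\ref{beta-cosingular-bis}. Finally I would invoke Theorem~\ref{beta-cosingular-bis} directly: $T$ is $F$-p.h. differentiable at $y$ and its differential $d_F^{ph}T(y)$ is weakly compact, hence $T$ is strong almost weakly compact, which is the desired conclusion.

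The main (and essentially only) delicate point is the identification carried out in the middle step. Because $R=d_F^{ph}T(y)$ is merely a continuous positively homogeneous map and need not be linear, the phrase \emph{"weakly compact operator"} appearing in the hypothesis of Theorem~\ref{beta-cosingular-bis} must be read as "$R(B_Y)$ is relatively weakly compact", that is, $R$ is an $F$-$wH$-operator. I would check that this is indeed the sense used there (as it is, since the argument behind Theorem~\ref{beta-cosingular-bis} only exploits the relative weak compactness of the image of the unit ball). Once this bookkeeping is settled, the corollary is immediate and requires no further computation.
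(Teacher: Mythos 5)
Your proposal is correct and is exactly the argument the paper intends: the paper derives this corollary by combining Theorem~\ref{theorem-beta-cosingular-bis} (stated as Theorem~\ref{beta-cosingular-bis}) with Theorem~\ref{theorem-beta-beta-op}, using Proposition~\ref{weak compact S} to guarantee property $(S)$ for the weak-Hadamard bornology, just as you do. Your clarifying remark that ``weakly compact operator'' in Theorem~\ref{beta-cosingular-bis} must be read as ``$F$-$wH$-operator'' for the (possibly non-linear) positively homogeneous differential is also the correct reading, since the proof of that theorem feeds $d_F^{ph}T(y)$ into Lemma~\ref{theorem1} precisely under the hypothesis of being a $\beta_Y$-$\beta_X$-operator.
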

In order to state  Lemma~\ref{theorem1} in all its generality, let us continue with the following definition which generalize the notion of co-Lipschitz map.
\begin{defn} 
	Let  $X$ and $E$ be two Banach spaces and $\beta_X$, $\beta_E$ be two bornologies on $X$ and $E$ respectively. Let $q : X \to E$ be a mapping. 
	We say that $q$ is $\beta_E$-$\beta_X$-co-Lipschitz, if for every $A\in \beta_E$ and every $\varepsilon >0$, there exists $B\in \beta_X$ such that 
	\[ \limsup_{t\to 0^+}\sup_{h\in A} \textup{dist}(h,\frac{q(tB)}{t}):=\limsup_{t\to 0^+}\sup_{h\in A} \inf_{x\in B}\|\frac{q(tx)}{t}-h\|\leq \varepsilon.\]
Whenever $\beta_E$ is the Fr\'echet bornology, we simple say that $q$ is $\beta_X$-co-Lipschitz.
\end{defn}
\begin{rem} \label{Drem} Let  $X$ and $E$ be two Banach spaces and $\beta_X$, $\beta_E$ be two bornologies on $X$ and $E$ respectively. Let $q : X \to E$ be a mapping. Suppose that for every $A\in \beta_E$ and every $\varepsilon >0$, there exists $B\in \beta_X$ such that 
\[rA \subset q(rB ) + \varepsilon r B_Z,~ \text{for all }r>0, \]
then, $q$ is $\beta_E$-$\beta_X$-co-Lipschitz. Notice also that, if $q:X\to E$ is a co-Lipschitz map and $q(0)=0$, then $q$ is $F$-$F$-co-Lipschitz.
\end{rem}

\begin{lem} \label{theorem1} 
	Let $X$, $Y$ and $E$ be Banach spaces, let $\beta_X$ and $\beta_Y$ be vector bornologies on $X$ and $Y$ respectively and $\beta_E$ and  $\beta'_E$ be two vector bornologies on $E$. 
	Assume that $\beta_X$ and $\beta_E$ satisfy property $(S)$.
	Let $T: Y\to X$ be a $\beta_Y$-p.h. differentiable at $0$ mapping such that $T(0)=0$ and $d_{\beta_Y} ^{ph}T(0)$ is a $\beta_Y$-$\beta_X$-operator. 
	Suppose that there exists a Lipschitz  mapping $q : X  \to E$, such that $q (0)=0$, $q$ is $\beta_X$-p.h. differentiable at $0$, $d_{\beta_X} ^{ph}q(0) $ is a $\beta_X$-$\beta_E$-operator and  $q \circ T$ is $\beta'_E$-$\beta_Y$-co-Lipschitz. 
	Then, $E$ is a $\beta'_E$-$\beta_E$-space. 
	
\end{lem}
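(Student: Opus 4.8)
The plan is to run the argument in three stages: first establish a chain rule for $q\circ T$, then use the co-Lipschitz hypothesis together with this chain rule to approximate any $\beta'_E$-set by a $\beta_E$-set, and finally invoke Lemma~\ref{lem1} in $E$. Throughout, write $R_T:=d_{\beta_Y}^{ph}T(0)$ and $R_q:=d_{\beta_X}^{ph}q(0)$.

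First I would show that $q\circ T$ is $\beta_Y$-p.h. differentiable at $0$ with differential $S:=R_q\circ R_T$, and that $S$ is a $\beta_Y$-$\beta_E$-operator. This is the vector-valued analogue of the implication $(i)\Rightarrow(ii)$ of Theorem~\ref{beta differentiability}: for $A\in\beta_Y$ one splits
\[
\sup_{u\in A}\left\|\dfrac{(q\circ T)(tu)-(q\circ T)(0)}{t}-S(u)\right\|\leq \textup{Lip}(q)\sup_{u\in A}\left\|\dfrac{T(tu)-tR_T(u)}{t}\right\|+\sup_{v\in R_T(A)}\left\|\dfrac{q(tv)-q(0)}{t}-R_q(v)\right\|.
\]
The first term tends to $0$ by the $\beta_Y$-p.h. differentiability of $T$; the second tends to $0$ by the $\beta_X$-p.h. differentiability of $q$, where it is essential that $R_T(A)\in\beta_X$ because $R_T$ is a $\beta_Y$-$\beta_X$-operator. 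The same two hypotheses give $S(A)=R_q(R_T(A))\in\beta_E$ for every $A\in\beta_Y$, so $S$ is indeed a $\beta_Y$-$\beta_E$-operator.

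Next I would fix $A\in\beta'_E$ and $\varepsilon>0$ and produce a $\beta_E$-set that is $\varepsilon$-close to $A$. Applying the $\beta'_E$-$\beta_Y$-co-Lipschitz property of $q\circ T$ to $\varepsilon/2$ gives $B\in\beta_Y$ with $\limsup_{t\to 0^+}\sup_{h\in A}\textup{dist}\!\left(h,\frac{(q\circ T)(tB)}{t}\right)\leq \varepsilon/2$. A triangle-inequality estimate then yields, for every $t>0$,
\[
\sup_{h\in A}\textup{dist}(h,S(B))\leq \sup_{h\in A}\textup{dist}\left(h,\dfrac{(q\circ T)(tB)}{t}\right)+\sup_{z\in B}\left\|\dfrac{(q\circ T)(tz)}{t}-S(z)\right\|.
\]
Passing to $\limsup_{t\to 0^+}$ kills the last term by the differentiability established in the first stage, leaving $\sup_{h\in A}\textup{dist}(h,S(B))\leq \varepsilon/2<\varepsilon$; equivalently $A\subset S(B)+\varepsilon B_E$. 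Since $S$ is a $\beta_Y$-$\beta_E$-operator, $S(B)\in\beta_E$. As $\varepsilon>0$ was arbitrary and $\beta_E$ satisfies property $(S)$, Lemma~\ref{lem1} (read in $E$) forces $A\in\beta_E$; as $A\in\beta'_E$ was arbitrary, $\beta'_E\subset\beta_E$, i.e. $E$ is a $\beta'_E$-$\beta_E$-space.

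The main obstacle I anticipate is the careful interchange of the two limiting processes: the $\limsup_{t\to 0^+}$ coming from the co-Lipschitz definition and the genuine limit coming from p.h. differentiability, together with the bookkeeping needed to keep the distance to $S(B)$ controlled \emph{uniformly} over $h\in A$ (rather than merely pointwise). Once the inequality above is set up so that both terms on the right are handled at the same scale $t$, everything else — the identification of $S(B)$ as a $\beta_E$-set and the reduction to Lemma~\ref{lem1} — is routine.
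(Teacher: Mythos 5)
Your proof is correct, but it takes a genuinely different route from the paper. The paper argues by contradiction through the differentiability machinery: assuming $E$ is not a $\beta'_E$-$\beta_E$-space, it invokes Proposition~\ref{prop2} to produce a Lipschitz function $f:E\to\R$ that is $\beta_E$-differentiable at $0$ (with derivative $0$) but not $\beta'_E$-differentiable at $0$, then uses the co-Lipschitz hypothesis to show $f\circ q\circ T$ is not $\beta_Y$-differentiable at $0$, while Theorem~\ref{beta differentiability} makes $f\circ q$ Lipschitz and $\beta_X$-differentiable at $0$; the implication $(iii)\Rightarrow(i)$ of Theorem~\ref{beta differentiability} then contradicts the hypothesis that $d_{\beta_Y}^{ph}T(0)$ is a $\beta_Y$-$\beta_X$-operator. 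You instead argue directly at the level of sets: a vector-valued chain rule (the analogue of $(i)\Rightarrow(ii)$, which indeed needs no property $(S)$) identifies $S=R_q\circ R_T$ as the $\beta_Y$-p.h.\ differential of $q\circ T$ and shows it is a $\beta_Y$-$\beta_E$-operator, and then the co-Lipschitz estimate plus the triangle inequality yields $A\subset S(B)+\varepsilon B_E$ for every $\varepsilon>0$, so Lemma~\ref{lem1} in $E$ concludes. Your route is essentially the same strategy as the paper's proof of Theorem~\ref{theorem-beta-beta-op} transplanted to this setting; what it buys is twofold: it avoids the Lipschitz-function construction (Lemma~\ref{beta lipschitz}) entirely, and it never uses the hypothesis that $\beta_X$ satisfies property $(S)$, so you actually prove a formally stronger statement (only $\beta_E$ needs property $(S)$). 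What the paper's proof buys is economy given its framework: once Theorem~\ref{beta differentiability} and Proposition~\ref{prop2} are available, the contradiction argument is very short, and it displays the lemma as a direct consequence of the main theorem. Your limit bookkeeping (constant left-hand side, subadditivity of $\limsup$, and the strict inequality $\varepsilon/2<\varepsilon$ to land in $S(B)+\varepsilon B_E$) is sound.
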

\begin{proof}  
	Suppose that $E$ is not a $\beta'_E$-$\beta_E$-space, then by Proposition~\ref{prop2}, there exists a positive Lipschitz function $f : E\to \R$ such that $f$ is $\beta_E$-differentiable at $0$ but not  $\beta'_E$-differentiable at $0$, with $f(0)=0$ and $d_{\beta_E}f(0)=0$. \\
	
	Since $f$ is not $\beta'_E$-differentiable at $0$, then there exists $A\in \beta'_E$ such that
\begin{align}\label{equation 1}
\exists \varepsilon >0, \forall\delta >0, \exists t\in(0,\delta)~ \textnormal{ such that } \sup_{h\in A} \frac{ f(th)}{t}\geq \varepsilon.
\end{align}
Since $q \circ T$ is $\beta'_E$-$\beta_Y$-co-Lipschitz, there exists  $B\in \beta_Y$ such that  
\begin{align}\label{equation 2}
\limsup_{t\to 0^+}\sup_{h\in A}\inf_{x\in B}\|\frac{q\circ T (tx)}{t}-h\|\leq \frac{\varepsilon}{2\textup{Lip}(f)}.
\end{align}
On the other hand, since $f$ is Lipschitz, we have for all $y\in B$, for all $h\in A$ and for all $t>0$
\begin{align}\label{equation 3}
		t^{-1} f\circ q \circ T(ty)  &\geq   t^{-1} f(th) -\textup{Lip}(f)\|\frac{q \circ T(ty)}{t}-h\|.
	\end{align}
	Since $f$ is a positive function and $f(q (T(0)))=0$, the only candidate for $\beta_Y$-derivative of $f\circ q \circ T$ at $0$ is $0$. 
	Thanks to \eqref{equation 1} and \eqref{equation 3}, for any $\delta>0$, there is $t\in(0,\delta)$ such that
	\begin{align*}
		\sup_{y\in B} t^{-1} f\circ q \circ T(ty)  &\geq  \sup_{h\in A} \left( t^{-1} f(th) -\textup{Lip}(f)\inf_{y\in B}\|\frac{q \circ T(ty)}{t}-h\| \right) \notag \\
&\geq  \sup_{h\in A} t^{-1} f(th) -\textup{Lip}(f)\sup_{h\in A}\inf_{y\in B}\|\frac{q \circ T(ty)}{t}-h\| \notag \\
		&\geq \varepsilon-\textup{Lip}(f)\sup_{h\in A}\inf_{y\in B}\|\frac{q \circ T(ty)}{t}-h\|.
	\end{align*}
Therefore, combining the last inequality and \eqref{equation 2}, it follows that 
\begin{align*}
		\limsup_{t\to 0^+} \sup_{y\in B} t^{-1} f\circ q \circ T(ty)  &\geq   \varepsilon-\textup{Lip}(f)\liminf_{t\to 0^+} \sup_{h\in A}\inf_{y\in B}\|\frac{q \circ T(ty)}{t}-h\|\notag \\
           &\geq \varepsilon/2.
	\end{align*}
	This shows that $f\circ q \circ T$ is not $\beta_Y$-differentiable at $0$. \\

On the other hand,  using Theorem~\ref{beta differentiability}, $f\circ q $ is  $\beta_X$-differentiable at $0=T(0)$ since $d_{\beta_X} ^{ph}q(0)$  is a $\beta_X$-$\beta_E$-operator. Now, since $f\circ q : X \to \R$ is Lipschitz, $\beta_X$-differentiable at $0=T(0)$ and $f\circ q \circ T$ is not $\beta_Y$-differentiable at $0$, we get from Theorem~\ref{beta differentiability} that $d_{\beta_Y} ^{ph}T(0)$  is not a $\beta_Y$-$\beta_X$-operator, which is a contradiction.  
\end{proof}

Now, we give the proofs of Theorem~\ref{beta-cosingular} and Theorem~\ref{beta-cosingular-bis}.
\begin{proof}[Proof of Theorem~\ref{beta-cosingular}] We apply Lemma~\ref{theorem1} to the function $S=T(\cdot+y)-T(y)$ with $\beta_Y=\beta_E'=F$ and $\beta_X=\beta_E=L$ (limited bornology), using Remark \ref{Drem} and the observation that $d_{L} q(0) $, as a linear bounded operator, is always $L$-$L$-operator. We get that $E$ is an $L$-space, that is a finite dimensional space by Proposition~\ref{prop1}.
\end{proof}

\begin{proof}[Proof of Theorem~\ref{beta-cosingular-bis}] We apply Lemma~\ref{theorem1} to the function $S=T(\cdot+y)-T(y)$ with $\beta_Y=\beta_E'=F$ and $\beta_X=\beta_E=wH$ (weak Hadamard bornology), using Remark \ref{Drem} and the observation   that $d_{wH} q(0) $, as a linear bounded operator, is always $wH$-$wH$-operator. We get that $E$ is a $wH$-space, that is a reflexive space by Proposition~\ref{prop1}.
\end{proof}
\section*{Acknowledgement}
This research has been conducted within the FP2M federation (CNRS FR 2036) and  SAMM Laboratory of the University Paris Panthéon-Sorbonne. The second author was partially supported by ANID-PFCHA/Doctorado Nacional/2018-21181905 and by CMM (UMI CNRS 2807), Basal grant: AFB170001.

\bibliographystyle{amsplain}

\end{document}